\newtheorem{theorem}{Theorem}
\newtheorem{proposition}[theorem]{Proposition}
\newtheorem{lemma}[theorem]{Lemma}
\newtheorem{observation}[theorem]{Observation}
\newtheorem{definition}[theorem]{Definition}
\newtheorem{corollary}[theorem]{Corollary}
\numberwithin{theorem}{section}
\DeclarePairedDelimiter\floor{\lfloor}{\rfloor}
\newcommand\rr{\rightarrow}
\newcommand{\PP}{\mathbb P}
\newcommand{\ZZ}{\mathbb{Z}}
\newcommand{\QQ}{\mathbb{Q}}
\newcommand{\CC}{\mathbb{C}}
\newcommand{\mc}{\mathcal}
\newcommand{\wb}[1]{\overline{#1}}
\newcommand{\wdt}{\widetilde}
\newcommand\blfootnote[1]{%
  \begingroup
  \renewcommand\thefootnote{}\footnote{#1}%
  \addtocounter{footnote}{-1}%
  \endgroup
}
\title{Semistable degenerations of double octics}
\thanks{The author was partially supported by National Science Centre, Poland, grant No. 2020/39/B/ST1/03358.}
\author[]{Marcin Oczko}
\begin{document}

\maketitle

\blfootnote{Marcin Oczko: Jagiellonian University, Faculty of Mathematics and Computer Science, Cracow, Poland, email: marcin.oczko@im.uj.edu.pl}

\blfootnote{Mathematics Subject Classification: 14D06, 14E15, 14J32}

\begin{abstract}
We present an algorithm for computing semistable degeneration of double octic Calabi-Yau threefolds. Our method has a combinatorial representation by the means of double octic diagrams. The proposed algorithm is applicable both in classical context over a complex disk as well as in arithmetic setting over a spectrum of DVR. We illustrate algorithm's efficacy through three examples where we compute semistable degeneration and limiting mixed Hodge structure for explicit families of double octics.

\end{abstract}

\section{introduction}

The main goal of the present article is to develop an efficient method of constructing semistable reduction of a one-parameter family of Calabi-Yau threefolds of double octic type. A Calabi-Yau threefold is a smooth, complex, projective manifold of dimension 3 with trivial canonical bundle and vanishing first Betti number. Calabi-Yau threefolds are three dimensional analogons of elliptic curves and K3 surfaces, contrary to the lower dimensional cases Calabi-Yau threefolds are not all homeomorphic. It is even not clear that they live in a finite number of families or that the numerical invariants (the Euler characteristic, Hodge numbers) are bounded.

Semistable degenerations and their associated monodromy weight spectral sequence are powerful tools to study algebraic schemes in both analytic and arithmetic setting (cf. \cite{KN}, \cite{DT}, \cite{DHT}, \cite{Lee}).

Semistable degenerations of families of elliptic curves are well-understood, in fact singular fibers of any relatively minimal family of elliptic curves are classified. Any semistable degeneration of K3 surfaces has a modification which is a Kulikov model (provided all components of the central fiber are k\"ahler). There are three types of central fibers of a Kulikov model (cf. \cite{Kulikov, PP}) depending on the unipotency index of the local monodromy. A similar phenomenon dose not hold for degenerations of Calabi-Yau threefolds (cf. \cite{CvS}) where monodromies do not classify types of central fiber. One of the main obstacles for developing a relevant theory is a lack of examples.

Computation of a semistable degeneration often requires intricate geometric reasoning and complex cohomological computations that make the straightforward approach infeasible. In this paper we present an explicit algorithm for computing semistable degeneration of one-parameter families of \emph{double octic Calabi-Yau threefolds}. A double octic is a crepant resolution of a double cover of the projective space  $\mathbb P^3$ branched along an \emph{octic arrangement} which is an arrangement of eight planes satisfying some very mild generality conditions. Double octic Calabi-Yau threefolds are very suitable for explicit computations as they admit an explicit geometric (or even combinatorial) description, while on the other hand they are rich enough to provide examples of many geometric and arithmetic phenomena.

One-parameter families of double octics were classified in \cite{meyermodular}, there are exactly 64 one-parameter families of double covers of $\PP^3$ branched along an octic arrangement $\mathcal X\longrightarrow \PP^1\setminus \Sigma$ all of them over a complement of a finite set $\Sigma$ in the projective line $\mathbb P^1$. A degeneration of  $\mathcal X$ at a point $w\in \Sigma$ is given by a complement of $\mathcal X$, it is not uniquely determined by the family $\mathcal X$, but it depends on an explicit parametrization. We shall consider only degenerations, when $X_w$ is also defined by a double cover of $\mathbb P^3$ branched along an octic arrangement. In this situation, the octic arrangement at $w$ has different singularities than other fibers of $\mc{X}$. As a consequence, a fiberwise resolution of singularities of generic fibers produces only a partial resolution of singularities of the degenerate fiber $X_w$.

To keep track of singularities of the degenerate fiber at each step of a resolution, we introduce octic diagrams (sect. \ref{diagrams}). From the combinatorial analysis of octic diagrams we are able to compute singularities of central fibers in each of eleven types of degenerations (described in Appendix \ref{singularity classification}). As a consequence we prove that there are four types of singularities in the central fiber (sect. \ref{diagrams}). From this local description of a partial resolution we derive singularities of the degenerate fiber for every degeneration listed in Appendix \ref{full list of degenerations}. Then the semistable reduction can be constructed by blowing up singular locus and taking finite coverings. Important cohomological information relating the central and generic fibers of a semistable degeneration are contained in the Clemens-Schmidt and the monodromy weight spectral sequences. We compute the former for three sample degenerations (which were chosen so that they demonstrate all possible types of local singularities). From the monodromy weight filtration we derive limiting Hodge structures and mixed Hodge structures on the degeneration, however, we refrain from delving here into that topic, as well as other applications,  to keep the paper concise.

Methods presented here easily translate to the arithmetic setting, where varieties are defined over a spectrum of discrete valuation rings. As an example in \cite{CO} we used result of this paper adapted to the arithmetic setting to prove that the Galois representation on $H^3_\text{\'et}(X,\mathbb{Q}_l)$ is unramified for $l\not=p$ and crystalline for $l=p$, for a certain double octic Calabi-Yau threefold $X$ over $\mathbb{Q}_p$, which provided a counterexample to the N\'eron-Ogg-Shafarevich criterion in dimension three. These techniques could also be used to give much simpler proofs for problems like those in \cite{ex} where one computes Betti numbers of schemes in positive characteristic.

This paper is structured as follows: we begin with the preliminaries, outlining the necessary background and definitions. We then move on to the detailed algorithm for resolving double octics, followed by a discussion on the infinitesimal deformations of double octics and their universal deformation families. The subsequent sections cover combinatorial methods for obtaining semistable reduction and the computation of the monodromy weight spectral sequence. Throughout the paper, we use diagrams to illustrate the degenerations of double octics and provide a comprehensive analysis of the singularities and their resolutions.

\section{Preliminaries}\label{the algorithm}

By a double cover of $V$ branched along $B$ we mean a normal algebraic variety $X$ together with a finite morphism of degree two $\pi \colon X \rr V$ such that the fiber $\pi^{-1}(v)$ is a single point for $v \in B$ and two points for $v \notin B$. When $V$ is smooth there exists a line bundle $\mc{L}$ on $V$ such that $\pi_* \mc{O}_X \cong \mc{O}_V \otimes \mc{L}^{-1}$. Then $\mc{O}_V(B) \cong \mc{L}^{\otimes 2}$. On the other hand if $\mc{L}$ is a line bundle on an algebraic variety $V$ such that $\mc{L}^{\otimes 2}$ is isomorphic to $\mc{O}_V(B)$ for a reduced divisor $B$ then there is a double cover $\pi \colon X \rr V$ of $V$ branched along $B$. In particular double cover is not uniquely determined by the branch locus alone. In this paper we consider only the case of double covers of algebraic varieties without 2-torsions in the Piccard group. In that case the double cover is uniquely determined by the branch locus.

In this paper, we are interested in a special type of double covers, namely double covers of $\PP^3$ branched along a union of eight distinct planes such that there are no $q$-fold curves for $q \geq 4$ and no $p$-fold points for $p \geq 6$. Double covers of $\PP^3$ branched along such planes arrangements were introduced in \cite{CYB}, we refer to their branching locus as \emph{octic arrangements}. The main motivation to consider them is the following theorem.

\begin{theorem}[\cite{CYB}]
\label{resolution of octic}
Let $D \subset \PP^3$ be an octic arrangement. Then there exists a sequence $\sigma = \sigma_1 \circ ... \circ \sigma_s \colon \wdt{\PP^3} \rr \PP^3$ of blowing-ups and a smooth and even divisor $D^* \subset \wdt{\PP^3}$ such that $\sigma_*(D^*) = D$ and the double covering $Y$ of $\wdt{\PP^3}$ branched along $D^*$ is a smooth Calabi-Yau threefold.
\end{theorem}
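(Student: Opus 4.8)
\emph{Proof strategy.} The plan is to build $\sigma\colon\wdt{\PP^3}\rr\PP^3$ as an explicit composition of blow-ups along the singular locus of the arrangement, performed in order of increasing dimension of the centres, and to check at each stage that the blow-up is \emph{crepant} for the double cover, i.e.\ that the canonical sheaf of the cover is unaffected. The starting point is the numerical identity $K_{\PP^3}+\tfrac12 D\sim -4H+\tfrac12\cdot 8H\sim 0$: since $\PP^3$ (and any blow-up of it along smooth centres) has torsion-free Picard group, the double cover $\pi_0\colon Y_0\rr\PP^3$ branched along $D$ is uniquely determined and satisfies $\omega_{Y_0}\cong\pi_0^*(\omega_{\PP^3}\otimes\mc L_0)$ with $\mc L_0^{\otimes 2}\cong\mc O(D)$, so $\omega_{Y_0}$ would already be trivial if $D$ were smooth; the whole issue is to resolve the singularities of $D$ without disturbing this. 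The basic computation is the following. If $\sigma\colon V'\rr V$ is the blow-up of a smooth centre $Z$ of codimension $c$ along which the reduced, even branch divisor $B$ has multiplicity $m$, with exceptional divisor $E$, then the adjusted divisor $B':=\sigma^*B-2\lfloor m/2\rfloor E$ is again reduced and even (its support is the strict transform $\widehat B$ together with $E$ when $m$ is odd, and just $\widehat B$ when $m$ is even), and from $K_{V'}=\sigma^*K_V+(c-1)E$ together with $\mc L'=\sigma^*\mc L\otimes\mc O(-\lfloor m/2\rfloor E)$ one gets $\omega_{Y'}\cong\pi'^*\sigma^*(\omega_V\otimes\mc L)\otimes\pi'^*\mc O((c-1-\lfloor m/2\rfloor)E)$. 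Hence replacing $B$ by $B'$ under such a blow-up preserves triviality of the canonical sheaf precisely when $c-1=\lfloor m/2\rfloor$.

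I would then run through the possible centres occurring in an octic arrangement. A double line has $(c,m)=(2,2)$, a triple line $(2,3)$, a $4$-fold point $(3,4)$ and a $5$-fold point $(3,5)$, and in all four cases $c-1=\lfloor m/2\rfloor$, so the corresponding blow-up is crepant; an ordinary triple point has $(c,m)=(3,3)$, giving $c-1-\lfloor m/2\rfloor=1>0$, so it must \emph{not} be blown up directly (the local model of $Y_0$ there is $w^2=xyz\cong\CC^3/(\ZZ/2)^2$, whose toric crepant resolution is obtained by blowing up the three double lines through the point). Finally a $q$-fold line with $q\geq 4$ gives $c-1-\lfloor m/2\rfloor\leq-1$, and likewise a $p$-fold point with $p\geq 6$; in either case triviality of the canonical sheaf cannot be restored and no crepant resolution exists, which is exactly where the generality hypotheses on $D$ enter. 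Accordingly I would blow up, successively: (i) all $5$-fold points; (ii) all $4$-fold points; (iii) the strict transforms of all triple lines; (iv) the strict transforms of all double lines, including the double lines created along exceptional divisors in the earlier steps; at each step one passes to the adjusted branch divisor $B'$. Every blow-up in this list is crepant, so writing $D^*$ for the final adjusted branch divisor we obtain $\omega_{\wdt{\PP^3}}\otimes\mc L\cong\mc O_{\wdt{\PP^3}}$, equivalently $K_{\wdt{\PP^3}}+\tfrac12 D^*\sim 0$; moreover $D^*$ is even, and $\sigma_*D^*=D$ because the exceptional divisors occurring in $D^*$ push forward to $0$.

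It then remains to see that $D^*$ is smooth and that the resulting double cover $Y$ is a Calabi--Yau threefold. For smoothness one checks that after steps (i)--(iv) every pairwise intersection of the eight planes, and every point or line along which three or more of them met, has been blown up, so that the strict transforms of the eight planes are smooth and pairwise disjoint and each remaining exceptional component is smooth and meets the others transversally or not at all; thus $D^*$ is a disjoint union of smooth surfaces and $Y$ is smooth. It is projective of dimension $3$, being a double cover of a projective threefold. Triviality of $K_{\wdt{\PP^3}}+\tfrac12 D^*$ forces $\mc L\cong\omega_{\wdt{\PP^3}}^{-1}$, whence $\omega_Y\cong\pi^*(\omega_{\wdt{\PP^3}}\otimes\mc L)\cong\mc O_Y$. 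For the first Betti number, $H^1(Y,\mc O_Y)\cong H^1(\wdt{\PP^3},\mc O_{\wdt{\PP^3}})\oplus H^1(\wdt{\PP^3},\mc L^{-1})$; the first summand vanishes because $\wdt{\PP^3}$ is rational, and $\mc L^{-1}\cong\omega_{\wdt{\PP^3}}$, so by Serre duality the second equals $H^2(\wdt{\PP^3},\mc O_{\wdt{\PP^3}})^\vee=0$; hence $b_1(Y)=2h^{0,1}(Y)=0$ and $Y$ is a Calabi--Yau threefold.

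The main obstacle is the combinatorial verification hidden in step (iii) of the plan above: that the prescribed sequence of blow-ups really does terminate in a \emph{smooth} $D^*$, and that every intermediate centre is itself smooth and meets the current branch divisor with exactly the claimed multiplicity, so that no blow-up silently fails to be crepant — in particular that no curve of multiplicity $\geq 4$ and no point of multiplicity $\geq 6$ is ever created along the way. Establishing this requires keeping track of how planes, double lines, triple lines and $p$-fold points with $p\leq 5$ are positioned relative to one another, and of how each such local configuration transforms under the four types of blow-up, including the delicate case of a point lying simultaneously on a triple line and on several further planes. Organising these configurations and their transformations is precisely the purpose of the octic diagrams of Section~\ref{diagrams}, and that is where the bulk of the work lies.
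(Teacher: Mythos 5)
Your proposal follows essentially the same route as the paper, which states this theorem with a citation to \cite{CYB} and records precisely this resolution procedure: a sequence of crepant blow-ups of the multiple loci of the arrangement (your criterion $c-1=\lfloor m/2\rfloor$ is exactly why double/triple lines and $4$-/$5$-fold points are admissible centres while $p^0_3$ points, $q$-fold lines with $q\geq 4$ and $p$-fold points with $p\geq 6$ are not), ending in a smooth even branch divisor, from which $\omega_Y\cong\mc O_Y$ and $b_1(Y)=0$ follow by the standard double-cover computations you give. The only deviation is the ordering — you blow up all fourfold points (including $p^1_4$ points) before the triple lines, whereas the paper's algorithm does fivefold points, then triple lines, then the remaining quadruple points, then double lines — but since a $p^1_4$ point has branch multiplicity exactly $4$ this extra point blow-up is still crepant and just yields a different crepant model, so your variant is consistent with the paper's argument (both of you, like the paper, defer the detailed combinatorial termination check to \cite{CYB}).
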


Smooth Calabi-Yau threefolds obtained this way are called \emph{double octics}. An explicit algorithm for resolving a double cover of an octic arrangement is the following.

Given an octic arrangement $D \subset \mathbb{P}^{3}$, and a double cover $X \rightarrow \mathbb{P}^{3}$ branched along $D$, we perform the following steps.

Step 1. Blow up each fivefold point of the arrangement to get $\sigma_{1}: T_{1} \rightarrow \mathbb{P}^{3}$. Let $D_1$ be a sum of a strict transform of $D$ plus the exceptional divisor. 

% We define $X_{1}$ as a double cover of $T_{1}$ branched along $D_1$.

Step 2. Blow up all triple lines $\sigma_{2}: T_{2} \rightarrow T_{1}$. Note that, after step 1. all triple lines are pairwise disjoint. Let $D_2$ be the sum of a strict transform of $D_1$ plus the exceptional divisor. 

% We define $X_{2}$ as a double cover of $T_{2}$ branched along $D_2$.

Step 3. Blow up each quadruple point, $\sigma_{3}: T_{3} \rightarrow T_{2}$. Let $D_3$ be a strict transform of $D_2$. 

% We define $X_{3}$ as a double cover of $T_{3}$ branched along $D_3$.

Step 4. Blow up each double line, $\sigma_{4}: T_{4} \rightarrow T_{3}$. Let $D_4$ be a strict transform of $D_3$. We define $Y$ as a double cover of $T_{4}$ branched along $D_4$. The result of this step depends on the order in which we blow up lines.

The threefold $Y$ is a smooth Calabi-Yau manifold, as its branching locus of is a disjoint union of smooth surfaces, and each step was a crepant blow-up.

Throughout the paper we use the following naming convention. 
\begin{itemize}
    \item $\mc{D}$ is a family of planes arrangements.
    \item $\mc{X}$ is a family of singular double covers branched along members of $\mc{D}$. 
    \item $\mc{Y}$ is a family whose generic fiber are smooth double octics.
\end{itemize}

The capital letters $D,X,Y$ stand for members of respective families.

\section{Infinitesimal deformations of double octics}
\label{extending resolution}

By the Bogomolov-Tian-Todorov theorem, for each Calabi-Yau threefold $Y$ with $h^{1,2}(Y)=1$, there exists a smooth family $\mathcal{Y}$ over a complex disk, which is universal for $Y$. In case of a double octic, we can construct a universal family explicitly as a quasi-projective variety, such families are completely classified in \cite{CYK}.

We can construct universal family as follows. For every double cover $X \rr \PP^3$ branched along 8 planes, with a resolution $Y$ having $h^{1,2}(Y)=1$, we associate a family $\mc{D}$ of planes arrangements, whose member is the branching locus of $X$.

$$
\mc{D} := \{((x:y:z:t),(A:B)) \in \PP^3 \times (\PP^1 \setminus \Sigma) :P(x,y,z,t,A,B)=0\},
$$
The projective variable $(A: B) \in \PP^1$ is the parameter of the family, $\Sigma \subset \mathbb{P}^{1}$ is a finite set and $P(x, y, z, t, A, B)=$ $\prod_{i=1}^{8} P_{i}(x, y, z, t, A, B)$ is a product of eight polynomials $P_{i}$ linear in variables $x, y, z, t$ and homogeneous in $(A: B)$. Moreover, we require that planes in all members of such family have the same incidence relations between them, as the branching locus of $X$. We refer to arrangements satisfying this property as having the same \emph{incidences}. A family of double covers $\mc{X}$ over $\mathbb{P}^{1} \backslash \Sigma$ can be extended to a family $\wb{\mc{X}}$ over the entire $\PP^1$ by taking a closure of double covers of $\mathbb{P}^{3}$ branched along the closure of  $\overline{\mathcal{D}}$ of $\mc{D}$. The fibers of $\wb{\mc{X}}$ which have different incidences are called \emph{degenerate} fibers, while the other ones are called \emph{generic} fibers. The particular closure $\overline{\mc{D}}$ is determined by the equations for $\mc{D}$ we choose.

By the Theorem \ref{resolution of octic} each fiber of $\mathcal{X}$ whose branching divisor is an octic arrangement can be resolved to a smooth Calabi-Yau threefold. We show that one can obtain a map $\sigma: \mathcal{Y} \rightarrow \mathcal{X}$, where $\mathcal{Y}$ is a family whose generic fibers are smooth Calabi-Yau threefolds, and the fiberwise restriction $\sigma_w \colon Y_w \rr X_w$ for any $w \in \PP^1 \setminus \Sigma$, is the resolution from the Theorem \ref{resolution of octic}. We make two observations.

\begin{observation}
\label{centers are intersections}
Any center of the blow-up that appears in a resolution of a double cover of an octic arrangement can be defined as an intersection of a subset of components in the branching divisor.
\end{observation}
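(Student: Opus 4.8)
The plan is to run through the four blow-up steps of the resolution algorithm following Theorem~\ref{resolution of octic} in turn, and to check in each case that the centers introduced at that step are intersections of a subset of the components of the branching divisor present at that stage, keeping track of how that divisor grows. In fact, once one knows that a center is a $p$-fold point or a $q$-fold curve of the current branching divisor, it is by definition the intersection of the $p$, respectively $q$, components passing through it; the real content is to verify that at every stage the loci so named are \emph{exactly} such intersections (and not merely pieces of them) and that the divisor stays manageable.

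For Step 1 this is immediate: a fivefold point $p$ of $D$ lies on exactly five of the eight planes (exactly, since the octic conditions forbid sixfold points), and the intersection of those five planes equals $\{p\}$ — if it were positive-dimensional it would be a line or a plane contained in five distinct planes of the arrangement, contradicting the distinctness of the planes and the prohibition of $q$-fold curves with $q \ge 4$. For Step 2, I would first record that after Step 1 the triple lines are pairwise disjoint: two distinct triple lines of $D$ meeting at a point would force that point either to lie on a common pair of the planes, whence the two lines coincide, or to lie on at least five of the planes, whence it is one of the fivefold points blown up in Step 1 and the two lines are separated. One then checks that every triple line of $D_1$ is the strict transform of a triple line of $D$, i.e. equals $\wdt{H_i} \cap \wdt{H_j} \cap \wdt{H_k}$ for three of the planes: a curve lying on an exceptional $\PP^2$ over a fivefold point cannot lie on two further components, because that $\PP^2$ meets each strict transform of a plane in a line and meets the strict transform of a line of $\PP^3$ in at most a point, and the exceptional divisors over distinct fivefold points are disjoint.

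For Steps 3 and 4 the branching divisor moreover contains the exceptional divisors over the triple lines, each a ruled surface over $\PP^1$, and the argument has the same shape: a quadruple point of $D_2$, respectively a double line of $D_3$, is a zero- respectively one-dimensional piece of an intersection of the appropriate number of components, and since double lines are again treated one at a time — their strict transforms remaining intersections of components — the property propagates through the whole algorithm. The step I expect to be the main obstacle is exactly this last one: one must describe the strata lying inside the exceptional divisors created in Steps 1 and 2 (for instance those produced by a triple line through a fivefold point, or by several triple lines meeting on such an exceptional $\PP^2$) and identify each of them with an intersection of components of the current branching divisor. This is precisely the local bookkeeping carried out in the case analysis of Appendix~\ref{singularity classification}, and it is there that essentially all of the work resides.
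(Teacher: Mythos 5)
Your argument is correct and follows exactly the line the paper intends: the centers at each step are the $p$-fold points and $q$-fold curves of the current branch divisor, which are by construction (and by the octic-arrangement conditions plus the disjointness achieved at earlier steps) precisely the intersections of the components passing through them. The paper in fact states this as an unproved observation, so your write-up supplies more detail than the original; no discrepancy to report.
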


\begin{observation}
\label{intersections can be extended}
Let $\mc{D}$ be as above. For any $w \in \PP^1\setminus \Sigma$ and any intersection $Z$ of a subset of planes in $D_w$, there exists an intersection $\mc{Z}$ of hypersurfaces in $\mc{D}$, such that $\mc{Z}_w = Z$.
\end{observation}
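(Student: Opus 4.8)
The plan is to produce $\mc{Z}$ as the relative intersection, inside $\PP^3\times(\PP^1\setminus\Sigma)$, of the relative hyperplanes cutting out the planes that define $Z$, and then to verify that its fibre over $w$ is exactly $Z$; the only real input is the constant‑incidences property that $\mc{D}$ was built to have.

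Concretely, write $P=\prod_{i=1}^{8}P_i$ as in the construction of $\mc{D}$, and for each $i$ let $\mc{P}_i:=\{P_i=0\}\subset\PP^3\times(\PP^1\setminus\Sigma)$ be the corresponding relative hypersurface — these are exactly the ``hypersurfaces in $\mc{D}$'' referred to in the statement. First I would record two facts about the $\mc{P}_i$. (i) Since $P_i$ is linear in $x,y,z,t$, the fibre of $\mc{P}_i$ over any $w'\in\PP^1\setminus\Sigma$ is cut out by the linear form $P_i(\cdot,w')$; this form is nonzero because the finitely many parameters at which some $P_i(\cdot,w')$ vanishes identically are, by construction, absorbed into $\Sigma$. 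Hence $(\mc{P}_i)_{w'}$ is precisely the plane $P_i(w')$ occurring in $D_{w'}$. (ii) Over $\PP^1\setminus\Sigma$ the hypersurface $\mc{P}_i$ has no vertical components (those would again force a parameter into $\Sigma$), so it is a Cartier divisor, pure of relative dimension $2$ and flat over $\PP^1\setminus\Sigma$.

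Now, given $Z=\bigcap_{i\in S}P_i(w)$ for a subset $S\subseteq\{1,\dots,8\}$, I would set $\mc{Z}:=\bigcap_{i\in S}\mc{P}_i$. Forming fibres commutes with forming intersections, so $\mc{Z}_{w'}=\bigcap_{i\in S}(\mc{P}_i)_{w'}=\bigcap_{i\in S}P_i(w')$ for every $w'$, and since the $\mc{P}_i$ are Cartier this is an equality of subschemes, not just of point sets; in particular $\mc{Z}_w=Z$. The remaining point — and the one place the hypothesis is used — is to see that $\mc{Z}$ is a genuine ``intersection of hypersurfaces'' of the right size rather than something polluted by vertical components: ``same incidences'' forces $\dim\bigl(\bigcap_{i\in S}P_i(w')\bigr)$ to be independent of $w'\in\PP^1\setminus\Sigma$, so every fibre of $\mc{Z}\to\PP^1\setminus\Sigma$ has the same dimension, no component of $\mc{Z}$ lies over a single point, every component dominates the smooth curve $\PP^1\setminus\Sigma$, and $\mc{Z}$ is flat over it with $\mc{Z}_w=Z$ of the expected dimension. (If one also wants $\mc{Z}$ irreducible when $Z\neq\emptyset$, note that a finite intersection of hyperplanes in $\PP^3$ is a linear subspace, hence all fibres of $\mc{Z}$ are irreducible, and apply the standard fact that a variety flat over an irreducible base with irreducible fibres is irreducible.) I expect the only ``difficulty'' to be bookkeeping: identifying precisely which form of the constant‑incidences condition is invoked and keeping the scheme structures honest; there is no substantive geometry beyond that.
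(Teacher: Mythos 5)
Your construction is correct and is exactly the one the paper has in mind: the paper states this as an unproved ``observation,'' the implicit argument being precisely that $\mc{Z}=\bigcap_{i\in S}\{P_i=0\}$ works because taking fibres over $w$ commutes with taking intersections. Your additional remarks on flatness, absence of vertical components, and irreducibility are fine but go beyond what the statement requires (it only asks that $\mc{Z}_w=Z$), so the core of your proof is the one-line verification $\mc{Z}_w=\bigcap_{i\in S}(\mc{P}_i)_w=Z$.
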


These observations imply that for a generic fiber $X$ of a family $\mc{X}$ we can extend its resolution to the whole family. Other generic fibers are resolved as well, as the blow-ups are determined by combinatorial incidences in the branching locus. Degenerate fibers, on the other hand, might be singular. It's worth adding that changing the order in which we blow up double lines, not only affects generic fibers, but may also change the degenerate fiber.

We finish this section by describing how blow-ups of generic fibers affect degenerate fibers. We remind that each step in the resolution from section $\ref{the algorithm}$ consists of taking a blow-up $\sigma$ and defining a new branching divisor $D^* := \sigma^* D - \sum_i 
 2 \lfloor \frac{a_i}{2} \rfloor E_i$, where $E_i$ are components of the exceptional divisor of the blow-up $\sigma$ and $a_i$ are multiplicities of centers of blow-ups corresponding to $E_i$.

\begin{lemma}
\label{commutativity of resolution}
Let $\mc{X}$ be a family of double covers branched along a family of octic arrangements $\mc{D}$. Let $X_0$ be a fiber of a family $\mc{X}$ and let $\sigma: \widetilde{\mathcal{X}} \rightarrow \mathcal{X}$ be a blow-up with an exceptional divisor $\mathcal{E}$ and a center $\mathcal{Z}$ such that no component of $\mc{Z}$ is inside $X_{0}$. Then, $\left.\sigma\right|_{X_{0}}: \sigma^{-1}\left(X_{0}\right) \rightarrow X_{0}$ is a blow-up with the center $\mathcal{Z} \cap X_{0}$. If $\mathcal{D}$ is a divisor in $\mathcal{X}$, which has no component contained in $X_{0}$ and $\mathcal{D}^{*}:=\sigma^{*}(\mathcal{D})-k \mathcal{E}$ for some $k \in \ZZ$, then $\left.\mathcal{D}^{*}\right|_{X_{0}}=\sigma^{*}\left(\left.\mathcal{D}\right|_{X_{0}}\right)-k E$, where $E=\left.\mathcal{E}\right|_{X_{0}}$ is an exceptional divisor of $\sigma|_{X_{0}}$.
\end{lemma}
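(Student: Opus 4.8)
The statement has two parts: first, that restricting a blow-up $\sigma\colon\widetilde{\mathcal X}\to\mathcal X$ with centre $\mathcal Z\not\subseteq X_0$ to the fibre $X_0$ gives a blow-up of $X_0$ with centre $\mathcal Z\cap X_0$; second, a compatibility of the "branch-divisor subtraction" operation with this restriction. Both are essentially local and can be checked in a local model, so the strategy is: reduce to the affine/formal-local picture, identify the blow-up with its universal-property description (or with the explicit Proj/Rees-algebra construction), and then observe that the hypothesis $\mathcal Z\not\subseteq X_0$ means $X_0$ meets $\mathcal Z$ properly, which is exactly what is needed for the base change $X_0\hookrightarrow\mathcal X$ to commute with $\mathrm{Bl}_{\mathcal Z}$.

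For the first part I would argue as follows. Blow-up commutes with flat base change in general, but $X_0\hookrightarrow\mathcal X$ need not be flat; however there is the classical fact (e.g.\ via the Rees algebra, or Hartshorne II.7 exercises) that for a closed subscheme $W\subseteq\mathcal X$ and a closed subscheme $X_0\subseteq\mathcal X$, one has $\mathrm{Bl}_{W\cap X_0}(X_0)\cong$ the strict transform of $X_0$ in $\mathrm{Bl}_W(\mathcal X)$, and that this strict transform equals the total transform $\sigma^{-1}(X_0)$ precisely when no component of $X_0$ is contained in $W$. Since $\mathcal Z$ here is an intersection of smooth hypersurfaces (a smooth centre, by Observation \ref{centers are intersections}) and $X_0$ is a fibre not contained in $\mathcal Z$, the divisor $\mathcal Z\cap X_0$ has the expected codimension in $X_0$ and $X_0\not\subseteq\mathcal Z$, so $\sigma^{-1}(X_0)=\sigma|_{X_0}^{-1}(X_0)$ is the blow-up of $X_0$ along $\mathcal Z\cap X_0$. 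Concretely: choose local coordinates in which $\mathcal Z=V(x_1,\dots,x_r)$ and $X_0=V(f)$ with $f\notin(x_1,\dots,x_r)$ locally; on each affine chart of the blow-up $\mathrm{Bl}_{\mathcal Z}\mathcal X$ the total transform $\sigma^*X_0$ is $V(\sigma^*f)$, and since $f$ has nonzero image modulo the ideal of $\mathcal Z$ the pulled-back equation does not acquire a factor of the exceptional coordinate, so $\sigma^*X_0$ is already reduced along $\mathcal E$ and coincides with the strict transform, which chart-by-chart is manifestly $\mathrm{Bl}_{\mathcal Z\cap X_0}X_0$.

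For the second part, once the first part identifies $\sigma|_{X_0}$ with $\sigma_{X_0}=\mathrm{Bl}_{\mathcal Z\cap X_0}X_0$ and $E=\mathcal E|_{X_0}$ with its exceptional divisor, the formula $\mathcal D^*|_{X_0}=\sigma^*(\mathcal D|_{X_0})-kE$ is a statement about Cartier divisors that follows by restricting the defining equation $\mathcal D^*=\sigma^*\mathcal D-k\mathcal E$. The only thing to justify is that restriction to $X_0$ commutes with $\sigma^*$, i.e.\ that $(\sigma^*\mathcal D)|_{X_0}=\sigma_{X_0}^*(\mathcal D|_{X_0})$ as divisors and not merely as divisor classes; this is where the hypothesis that $\mathcal D$ has no component contained in $X_0$ is used, since it guarantees $\mathcal D|_{X_0}$ is a well-defined effective Cartier divisor and that pulling back its local equation computes $(\sigma^*\mathcal D)|_{X_0}$ with the correct multiplicities. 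Similarly $\mathcal E|_{X_0}=E$ with multiplicity one, so the coefficient $k$ is preserved, and the identity drops out.

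\textbf{Main obstacle.} The delicate point is the interaction between base change and blow-up when the inclusion $X_0\hookrightarrow\mathcal X$ is not flat: one must be careful that $\sigma^{-1}(X_0)$ really is the (reduced, irreducible-component-wise) blow-up $\mathrm{Bl}_{\mathcal Z\cap X_0}X_0$ and not some thickening or extra-component phenomenon. The hypothesis $\mathcal Z\not\subseteq X_0$ — equivalently, $X_0$ has no component inside the centre — is exactly the condition that rules this out, so the proof amounts to making that reduction precise in a local model; the rest is bookkeeping with local equations on the standard blow-up charts.
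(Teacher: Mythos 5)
Your argument is correct and is in substance the paper's own proof: both reduce the first claim to the identification of $\sigma^{-1}(X_0)$ with the strict transform of $X_0$ (the paper phrases this as the closure of the graph of the defining rational map restricted to $X_0\setminus\mathcal Z$, you phrase it chart-by-chart via local equations), with the hypothesis $\mathcal Z\nsubseteq X_0$ doing the same work in each, and both dispatch the divisor identity by noting that pullback of local equations commutes with restriction to $X_0$. Your explicit attention to the expected-codimension condition on $\mathcal Z\cap X_0$ is a point the paper leaves implicit, but it does not change the route.
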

\begin{proof}
The blow-up $\sigma$ is given by a morphism $\phi: \mathcal{X} \backslash \mathcal{Z} \rightarrow \mathbb{P}^{n}$ which has the property that $\widetilde{\mathcal{X}}=\overline{\operatorname{graph} \phi} \subset \mathcal{X} \times \mathbb{P}^{n}$, thus $\sigma^{-1}\left(X_{0}\right)=\left.\overline{\operatorname{graph} \phi}\right|_{X_{0}}$. Since $\overline{\mathcal{Z} \backslash X_{0}}=\mathcal{Z}$, we have that $\left.\overline{\operatorname{graph} \phi}\right|_{X_{0}}=\overline{\left.\operatorname{graph} \phi\right|_{X_{0}}}$, which is by the definition the blow-up of $X_{0}$ in $\mathcal{Z} \cap X_{0}$.

For the second part we recall that pullbacks commute with restrictions, thus $\left.\sigma^{*}(\mathcal{D})\right|_{X_{0}}=$ $\left.\sigma^{*}\right|_{X_{0}}(D)$, it follows that $\left.\left(\sigma^{*}(D)-k \mathcal{E}\right)\right|_{X_{0}}=\left.\sigma^{*}\right|_{X_{0}}(\mathcal{D})-k E$.
\end{proof}

We shall mostly use the following corollary of this lemma which applies when the multiplicity of a subvariety in the branching locus rises in degenerate fibers.

\begin{corollary}
\label{jumping multiplicity}
    Let $\mc{X}, \mc{D}$ and $\mc{Z}$ be as in the lemma above. If there is a component $\mc{D}_i$ of $\mc{D}$ and a fiber $X_0$ of $\mc{X}$ such that $\mc{Z} \cap X_0 \subset \mc{D}_i$ and $\mc{Z} \not\subset \mc{D}_i$, then the blow-up $\wdt{\mc{X}}$ of $\mc{X}$ along $\mc{Z}$ introduces a new component in the branching locus of $\wdt{X_0}$. Moreover this extra component lies in a strict transform of $\mc{D}_i$.
\end{corollary}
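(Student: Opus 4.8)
The plan is to reduce everything to Lemma \ref{commutativity of resolution} together with a short count of multiplicities. Write $a:=\mathrm{mult}_{\mathcal D}\mathcal Z$, more precisely $a:=\mathrm{mult}_{\mathcal Z}\mathcal D$, the order of $\mathcal D$ along the ideal sheaf of $\mathcal Z$, so that, by the rule recalled just before the lemma, the branching divisor of $\widetilde{\mathcal X}$ is $\mathcal D^{*}=\sigma^{*}\mathcal D-2\lfloor a/2\rfloor\mathcal E$. Since $\mathcal Z\nsubseteq X_{0}$ and (as in the lemma) $\mathcal D$ has no component contained in $X_{0}$, Lemma \ref{commutativity of resolution} tells us that $\sigma_{X_{0}}:=\sigma|_{X_{0}}\colon\widetilde{X_{0}}\to X_{0}$ is the blow-up of $X_{0}$ along $\mathcal Z\cap X_{0}$ with exceptional divisor $E=\mathcal E|_{X_{0}}$, and that $\mathcal D^{*}|_{X_{0}}=\sigma_{X_{0}}^{*}(\mathcal D|_{X_{0}})-2\lfloor a/2\rfloor E$. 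So the whole statement reduces to determining the coefficient of $E$ in $\sigma_{X_{0}}^{*}(\mathcal D|_{X_{0}})$, namely $b:=\mathrm{mult}_{\mathcal Z\cap X_{0}}(\mathcal D|_{X_{0}})$, and to locating the strict transform $\widetilde{\mathcal D_{i}}$ over the exceptional locus.

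For the multiplicities I would use the elementary fact that $\mathrm{mult}_{W}(-)$ is additive on divisors and equals $1$ on a hypersurface cut out by a single equation of degree one in $(x:y:z:t)$ that contains $W$ but no fibre of the projection to the parameter line: the defining equation then lies in $I_{W}$ but not in $I_{W}^{2}$, because its partial derivatives in the $\mathbb P^{3}$-variables do not all vanish along its zero locus. For an arrangement $B=\sum_{j}B_{j}$ of such hypersurfaces this gives $\mathrm{mult}_{W}B=\#\{\,j:W\subseteq B_{j}\,\}$ for every subvariety $W$. Applying this to $\mathcal D=\sum_{j=1}^{8}\mathcal D_{j}$, and — using the standing assumption that the degenerate fibre $X_{0}$ is again a double cover of $\mathbb P^{3}$ branched along an octic arrangement, so that $\mathcal D|_{X_{0}}$ is reduced with the eight planes $\mathcal D_{j}|_{X_{0}}$ distinct — to $\mathcal D|_{X_{0}}$, we get $a=\#\{j:\mathcal Z\subseteq\mathcal D_{j}\}$ and $b=\#\{j:\mathcal Z\cap X_{0}\subseteq\mathcal D_{j}\}$. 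Since $\mathcal Z\subseteq\mathcal D_{j}$ forces $\mathcal Z\cap X_{0}\subseteq\mathcal D_{j}$, while the index $i$ is counted in $b$ but not in $a$ by the two hypotheses on $\mathcal D_{i}$, we conclude $b\ge a+1$.

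Consequently the coefficient of $E$ in $\mathcal D^{*}|_{X_{0}}$ equals $b-2\lfloor a/2\rfloor\ge (a+1)-2\lfloor a/2\rfloor=(a\bmod 2)+1\ge 1$, so $E$ is a component of the branching divisor of $\widetilde{X_{0}}$ that does not occur in the branching divisor of $X_{0}$ — the promised new component. For the last sentence I apply the second assertion of Lemma \ref{commutativity of resolution} to the single component $\mathcal D_{i}$ with $k=0$: since $\mathcal Z\nsubseteq\mathcal D_{i}$ we have $\mathrm{mult}_{\mathcal Z}\mathcal D_{i}=0$, hence $\sigma^{*}\mathcal D_{i}=\widetilde{\mathcal D_{i}}$, and therefore $\widetilde{\mathcal D_{i}}|_{X_{0}}=(\sigma^{*}\mathcal D_{i})|_{X_{0}}=\sigma_{X_{0}}^{*}(\mathcal D_{i}|_{X_{0}})$; on the other hand $\mathcal D_{i}|_{X_{0}}$ is a reduced plane through the centre $\mathcal Z\cap X_{0}$, so $\sigma_{X_{0}}^{*}(\mathcal D_{i}|_{X_{0}})$ is the strict transform of $\mathcal D_{i}|_{X_{0}}$ plus $E$ (with coefficient one). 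Comparing the two expressions, $E$ is a component of $\widetilde{\mathcal D_{i}}|_{X_{0}}$, so $E\subseteq\widetilde{\mathcal D_{i}}$, which is precisely the claim that the new branch component lies in the strict transform of $\mathcal D_{i}$.

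The step needing the most care is the multiplicity bookkeeping: one must read $\mathrm{mult}$ as the order along the ideal sheaf of the (possibly reducible or singular) centre, note that this order governs the exceptional coefficient of a pullback, and check that the extra $+1$ contributed by $\mathcal D_{i}$ is not erased by the rounding in $2\lfloor a/2\rfloor$ — which it is not, because $b\ge a+1$ whereas $a-2\lfloor a/2\rfloor\le 1$. One also genuinely uses that $X_{0}$ remains an honest octic arrangement: otherwise two components of $\mathcal D$ could coincide over $X_{0}$ and the count giving $b$ would have to be adjusted. The remaining steps are formal consequences of Lemma \ref{commutativity of resolution}.
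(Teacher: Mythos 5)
The paper states this corollary without proof, presenting it as an immediate consequence of Lemma \ref{commutativity of resolution}; your derivation---restricting the blow-up to $X_0$ via that lemma and then counting the exceptional coefficient $b-2\lfloor a/2\rfloor\ge (a\bmod 2)+1\ge 1$, with the extra $+1$ supplied by $\mathcal{D}_i$, plus the $k=0$ application of the lemma to locate $E$ inside the strict transform of $\mathcal{D}_i$---is exactly the intended argument and is correct. The only caveat worth recording is that ``new component of the branching locus'' must be read as a component of the support of the divisor $\mathcal{D}^*|_{X_0}$; in every application in the paper the centre degenerates from multiplicity $a$ even to $b=a+1$, so the coefficient is in fact $1$ and the cover is genuinely branched along $E$.
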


%%%%%%%%%%%%%%%%%%%%%%%%%%%%%%%%%%%%%%%%%%%%%%%%%%%%%%%%%%%%%%%%%%%%
\section{Octics diagrams}
\label{diagrams}
Throughout the rest of this article we shall use certain diagrams to study degenerations of double octics. By a degeneration we mean a proper map $\mc{X} \rr \Delta$, such that the map $\mc{X} \times_{\Delta} \Delta^* \rr \Delta^*$ is smooth. The purpose of these diagrams is to present the geometry of the central fiber \emph{at each step} of a resolution of generic fibers. Diagrams consist of squares, each representing a component of the branch divisor of the central fiber at some step of a resolution. There are three types of components in the branch divisor of the central fiber that appear during a resolution.

\begin{enumerate}
  \item Strict transforms of planes from the octic arrangement we started with. These are denoted by $P_1,P_2, \ldots$ following the order of linear forms in the equation,

  \item Exceptional divisors that are added to the branch divisor of all fibers after blowing up a five-fold point or a triple line in a generic fiber. These are labeled by capital letters $A, B, C, \ldots$,

  \item It may happen that we blow up a double line or a quadruple point in generic fibers, which degenerates to a triple line or to a quintuple point in a special fiber respectively. From the cor. \ref{jumping multiplicity} it follows that after such blow-up we have a new component in the branching locus of the central fiber. We label it by adding prime to the component in which strict transform of the new surface lies.
\end{enumerate}

We use the following convention to visualize further properties of the branch locus.
\begin{enumerate}
    \item For each intersection of surfaces $I$ and $J$ along a curve we add a line labeled $i$ to the square $J$, and a line labeled $j$ to the square $I$. In case two surfaces $J$ and $K$ intersect the surface $I$ along the same line, we label the corresponding line in $I$ as $jk$.

    \item To denote that a certain curve is an exceptional divisor of a surface we draw it as a dashed line.

    \item Lines on diagrams intersect if and only if their corresponding curves intersect as well. To denote that two curves do not intersect, we either draw them as disjoint lines or replace a corresponding point on a diagram by a circle.
\end{enumerate}

Throughout the the rest of the paper we use the following convention for naming singularities. We label a multiple line as $l_i$, where $i$ is the multiplicity of the line and we label multiple point as $p^j_i$, where $i$ is the multiplicity of the point, and $j$ is the number of triple lines on which it lies. Furthermore the family that we obtain from $\mc{X}$ by resolving all generic fibers, with degenerate fibers being possibly singular, is denoted as $\mc{Y}$ and the central fiber of $\mc{Y}$ is denoted $Y_0$.

In the remainder of this section we compute the singularities of degenerate fibers for all degenerations listed in the appendix \ref{singularity classification}. We start with a description of a resolution of singularities of a generic fiber as a sequence of blowing-ups and then extend it to the other generic fibers and also the central fiber by prop. \ref{extending resolution}. Singular curves in generic fibers are denoted as $L_{i_1,i_2,...}$ , where $i_1,i_2,...$ are the surfaces on which the singularity lies. For each blow-up we draw a series of diagrams which represents incidences between surfaces in the branching locus at each step. We finish each subsection with a description of singularities in the central fiber after all singularities in generic fibers are resolved.

%%%%%%%%%%%%%%%%%%%%%%%%%%%%%%%%%%%%%%%%%%%%%%%%%%%%%%%%%%%%%%%%%%%%

\subsection{New $l_3$ line}
We start with the equation 
$$ 0 =xy(x+y+w).$$
Resolution of a generic fiber consists of blowing up double lines $L_{12}, L_{23}, L_{13}$. The central fiber has the equation: $0=x y(x+y)$, it can be presented as the following picture.

\includegraphics[scale=0.8]{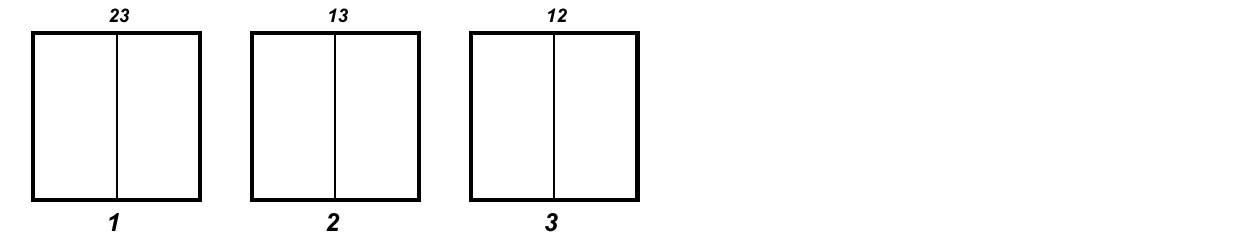}

In a generic fiber, we start by blowing up the line $L_{12}$ and then we remove the exceptional divisor lying above the two surfaces $P_1,P_2$ from the pullback of the branch locus. However, in the central fiber, the line $L_{12}$ lies also on the plane $P_3$, so the center of this blow-up becomes a triple curve, this introduces a new component to the branch locus.

\includegraphics[scale=0.8]{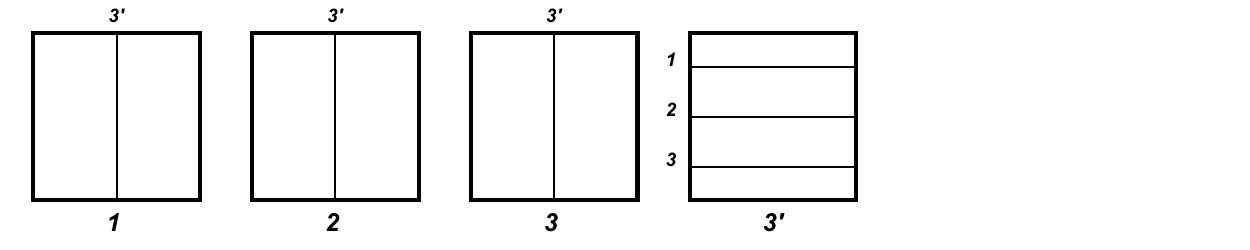}

In the above four diagrams we denote by $P_3'$ and $P_3$ the two components into which the plane 3 splits. The other two blow-ups in generic fibers are the blow-ups of lines $L_{13}, L_{23}$. In the central fiber, they are represented as $P_1 \cap\left(P_3 \cup P_3'\right)$ and $P_2 \cap\left(P_3 \cup P_3'\right)$ The diagram after these blow-ups is the following.

\includegraphics[scale=0.8]{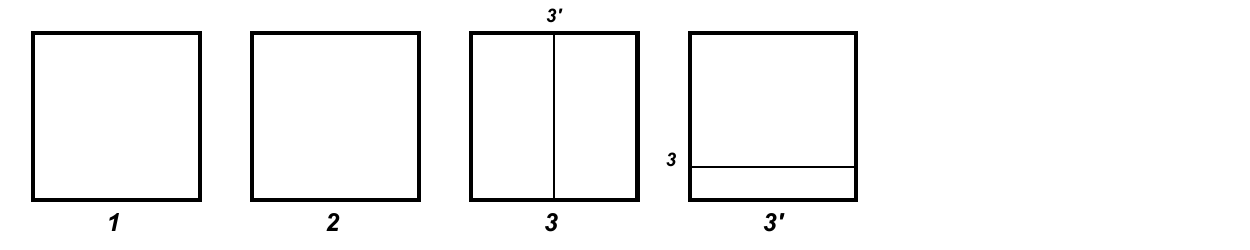}

After resolving generic fibers, we get a single double line in the central fiber lying over the triple line from the original arrangement.

%%%%%%%%%%%%%%%%%%%%%%%%%%%%%%%%%%%%%%%%%%%%%%%%%%%%%%%%%%%%%%%%%%%%

\subsection{New $p_{4}^{0}$ point}\label{p04 point}
We start with the equation
$$0=xyz(x+y+z+w).$$
Resolution of a generic fiber consists of blowing up the lines $L_{12}, L_{13}, L_{23}, L_{14}, L_{24}, L_{34}$. In the central fiber, the equation becomes $0=x y z(x+y+z)$. This can be visualized as follows.

\includegraphics[scale=0.8]{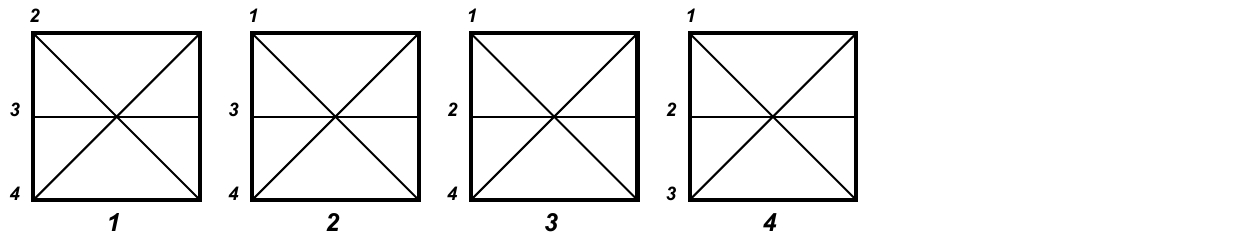}

The blow-up of the line $L_{12}$ resolves double lines on surfaces $P_1,P_2$, while surfaces $P_3,P_4$ are blown up at one common point. This means that after the blow-up the surfaces $P_3,P_4$ intersect along a line $L_{34}$ and their common exceptional divisor.

\includegraphics[scale=0.8]{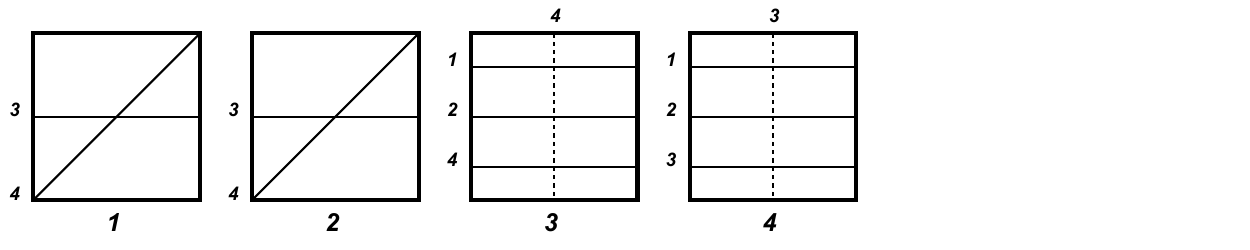}

The next blow-up of the line $L_{13}$ separates surfaces $P_1,P_3$ and blows up the surface $P_4$ at a point that lies on its exceptional divisor. This can be presented as follows.

\includegraphics[scale=0.8]{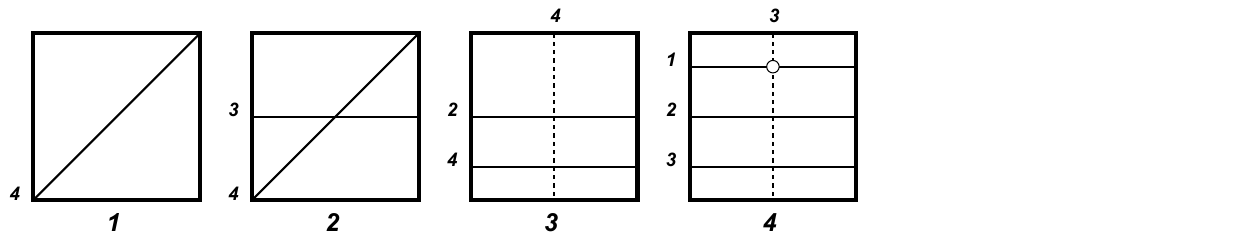}

Next, we blow the curve $L_{23}$.

\includegraphics[scale=0.8]{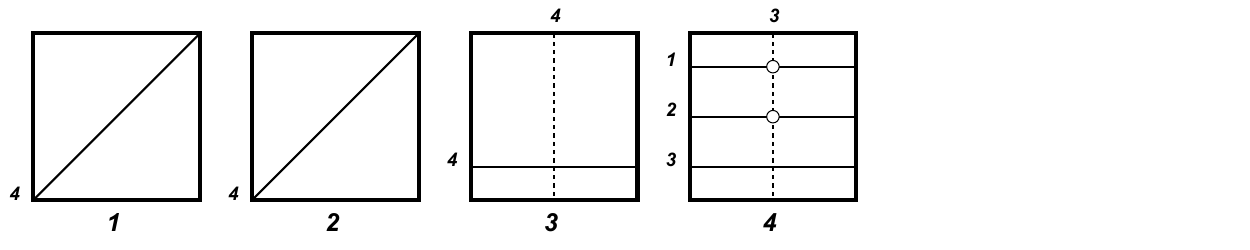}

The last blow-ups, first of the curve $L_{14}$ and followed by the blow-up of $L_{24}$, resolve the corresponding double curves on surfaces $P_1,P_2,P_4$.

\includegraphics[scale=0.8]{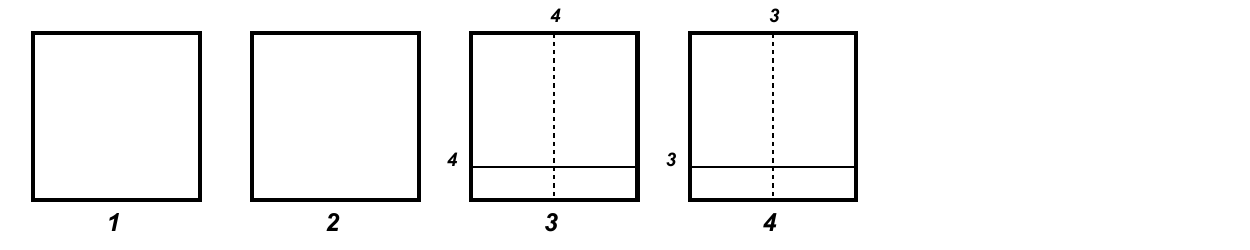}

Finally, we blow up the curve $L_{34}$. The surfaces $P_3, P_4$ intersect along two curves, one of them is the strict transform of the double line that comes from a generic fiber, and the other one is the exceptional divisor from the first blow-up. These two curves intersect at a single point. The center of this blow-up is a union of two transversely intersecting curves, thus the blow-up introduces a node in the ambient space outside the branching locus. The resulting double cover will have a pair of nodes. We remark that the pullback of any of the two blown-up lines is a smooth surface isomorphic to $\PP^1 \times \PP^1$ which contains these nodes, this means that these nodes have a small, projective resolution.

%%%%%%%%%%%%%%%%%%%%%%%%%%%%%%%%%%%%%%%%%%%%%%%%%%%%%%%%%%%%%%%%%%%%

\subsection{A $p_{5}^{1}$ point degenerates to a $p_{5}^{2}$ point}

We start with the equation
$$0=xy(x + y)z(x + wy + z).$$

Resolution of a generic fiber consists of three steps: blowing-up the fivefold point, the triple line $L_{123}$ and finally all double lines. The central fiber can be visualized as follows.

\includegraphics[scale=0.8]{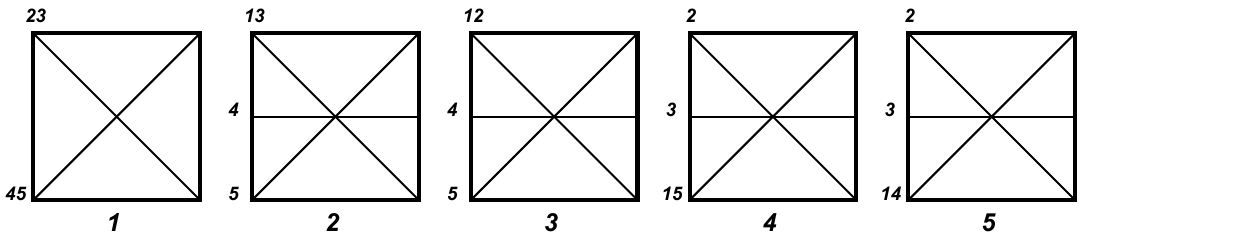}

The blow-up of the fivefold point separates all lines passing through it and introduces a new surface $P_A$ in the branching divisor.

\includegraphics[scale=0.64]{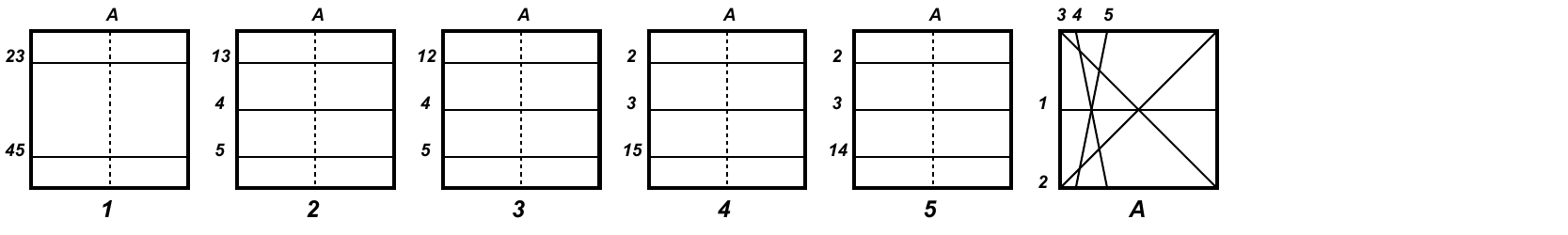}

We blow up the triple line $L_{123}$. As a result of that, a new component $P_B$ appears in the branching divisor and the surface $P_A$ is blown up at a point.

\includegraphics[scale=0.64]{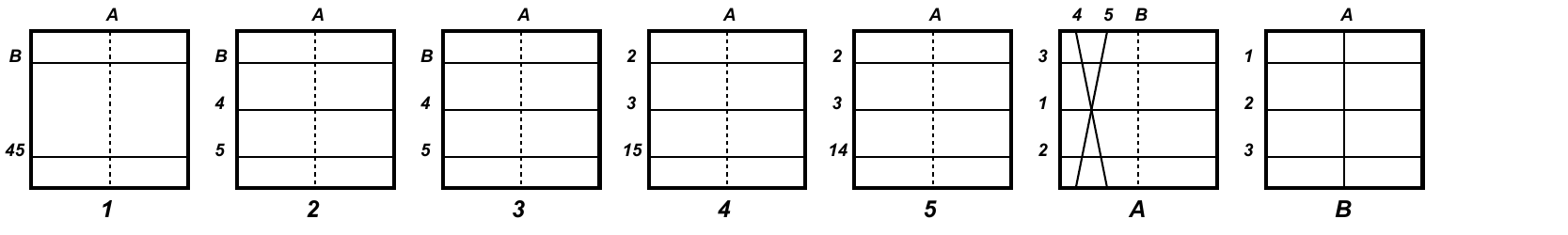}

We blow up the remaining double lines from a generic fiber: $L_{1 B}, L_{2 B}, L_{3 B}, L_{24}$, $L_{25}, L_{34}, L_{35},$ $L_{A B}, L_{1 A} L_{2 A}, L_{3 A}, L_{45}, L_{14}, L_{15}, L_{4 A}, L_{5 A}$. We start with the lines: $L_{1 B}$, $L_{2 B}, L_{3 B}, L_{24},$ $L_{25}, L_{34}, L_{35}$, they are pairwise disjoint, so we can blow them up in a single step.

\includegraphics[scale=0.64]{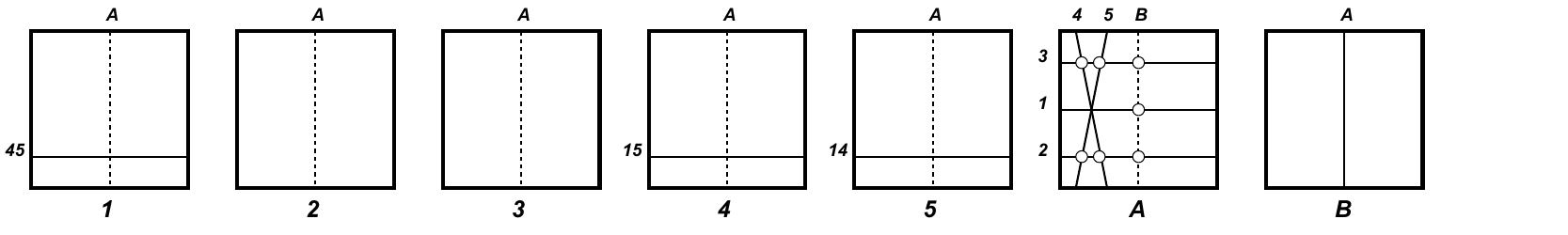}

We blow up the lines: $L_{2 A}, L_{3 A}, L_{A B}$. After that, surfaces $P_2,P_3,P_B$ are disjoint from other surfaces, so we no longer include them in our diagrams. We are left with the following configuration.

\includegraphics[scale=0.64]{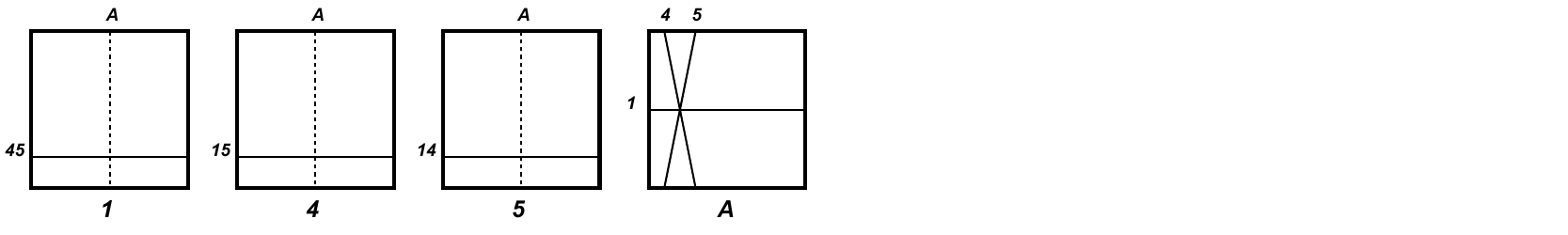}

We blow up the line $L_{14}$. By the same arguments as in the degeneration to the triple line, we get the following diagram.

\includegraphics[scale=0.64]{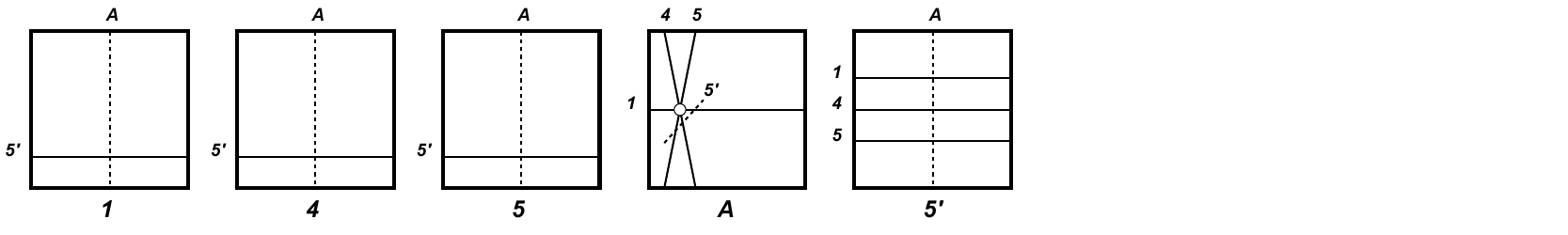}

We blow up the curves $L_{A 1}, L_{A 4}$.

\includegraphics[scale=0.64]{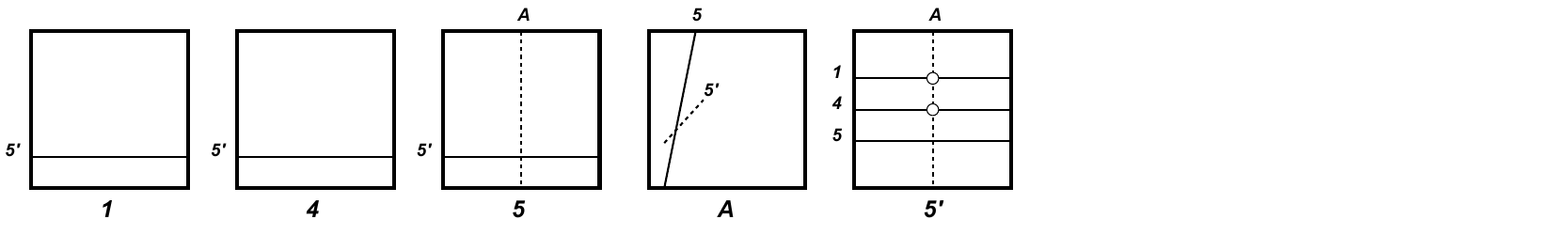}

Blow-ups of curves $L_{15}, L_{45}$ separate surfaces 1 and 4 from the others.

\includegraphics[scale=0.64]{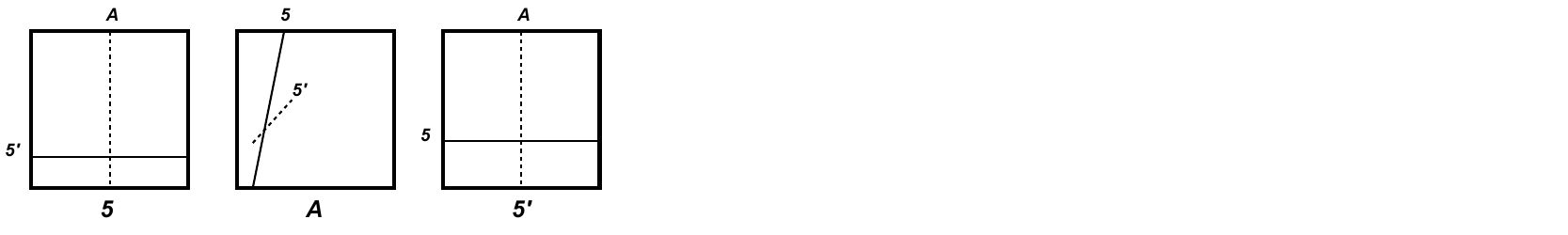}

The blow-up of the curve $L_{A 5}$ is a blow-up of two intersecting curves, where the point of intersection lies on the curve where surfaces $P_5$ and $P_5'$ intersect. As we performed all blow-ups in generic fibers the double curve where surfaces $P_5$ and $P_5'$ intersect will remain in the central fiber.

Concluding, after resolution of generic fibers, there is one double curve in the central fiber with a single pinch point on it.

%%%%%%%%%%%%%%%%%%%%%%%%%%%%%%%%%%%%%%%%%%%%%%%%%%%%%%%%%%%%%%%%%%%%

\subsection{Two $p_{4}^{1}$ points collide to a $p_{5}^{2}$ point}

We start with the following equation

$$x y (x+y) z (x+z+w)=0.$$

Generic fibers do not have a fivefold point, but only a triple curve $L_{123}$ and two fourfold points of type $p^1_4$ that lie on it. Thus, a resolution of a generic fiber starts with a blow-up of the triple curve, which resolves fourfold points and adds a new surface $P_A$ to the branching divisor. After that we blow up all remaining double lines.

The starting arrangement of planes in the central fiber looks as follows.

\includegraphics[scale=0.64]{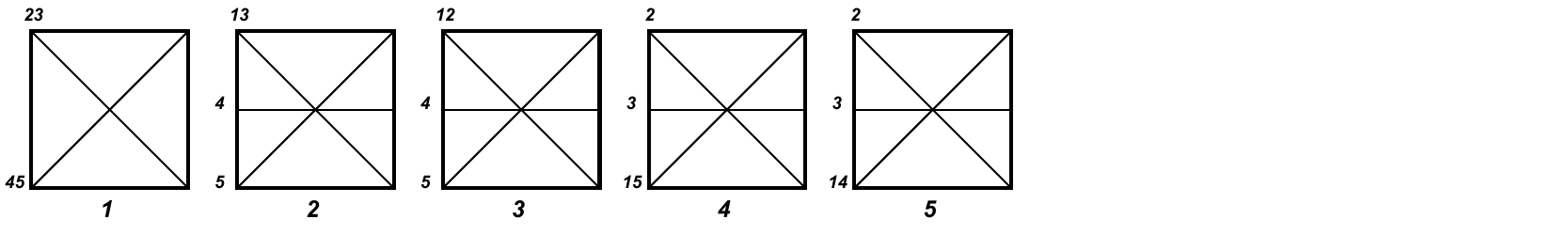}

Blow up the triple line $L_{123}$.

\includegraphics[scale=0.64]{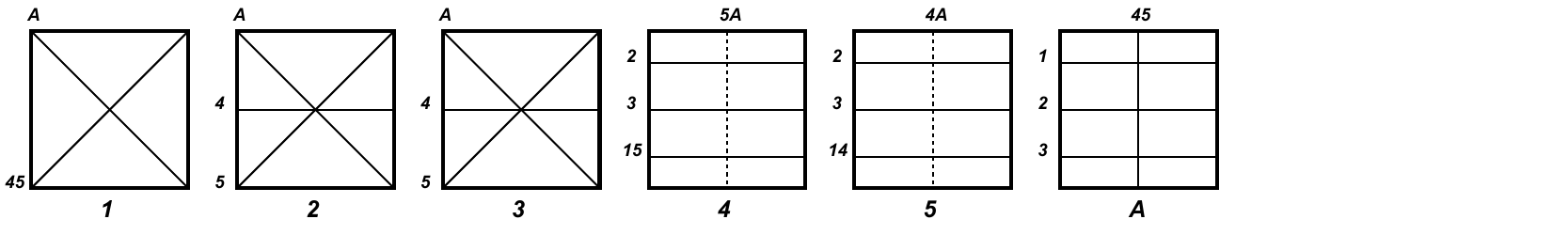}

Blow up curves $L_{1A}, L_{2A}, L_{3A}$.

\includegraphics[scale=0.64]{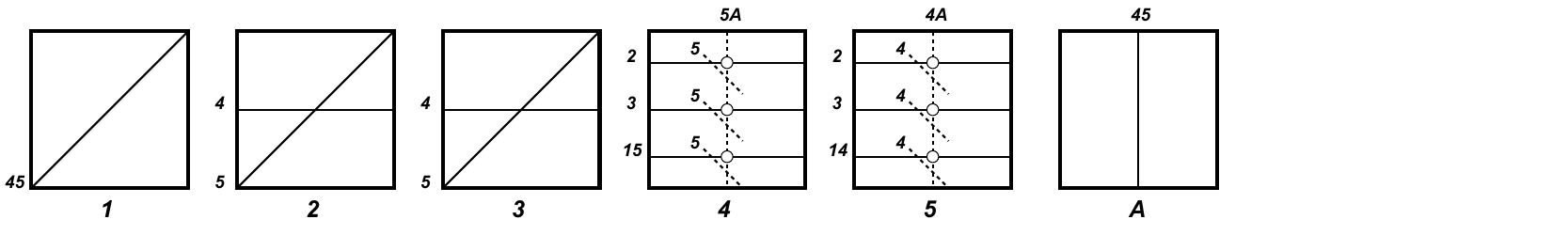}

Blow up curves $L_{24}, L_{25}, L_{34}, L_{35}$.

\includegraphics[scale=0.64]{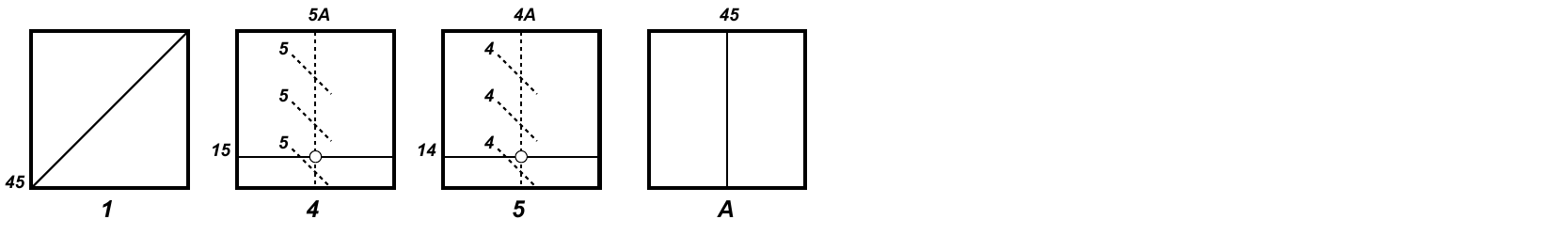}

Blow up curve $L_{14}$.

\includegraphics[scale=0.64]{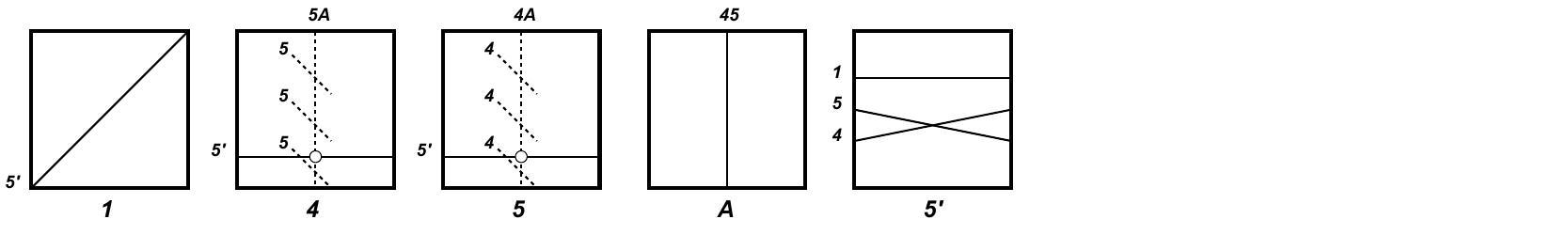}

Blow up curve $L_{5A}$.

\includegraphics[scale=0.64]{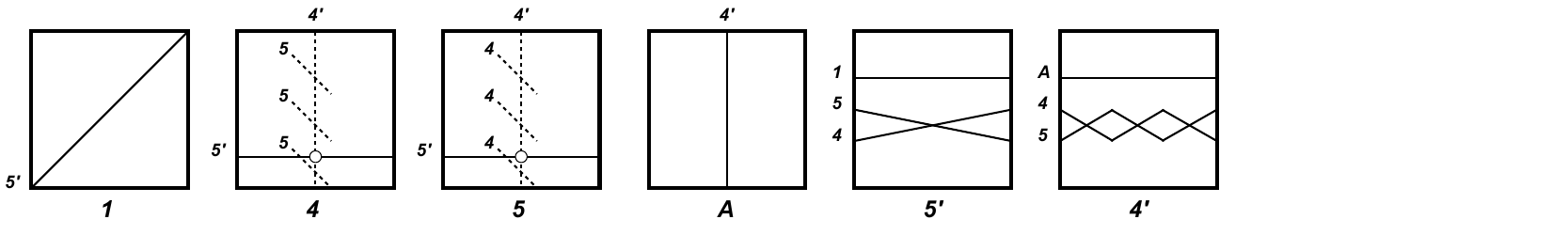}

Blow up curves $L_{4A},L_{15}$.

\includegraphics[scale=0.64]{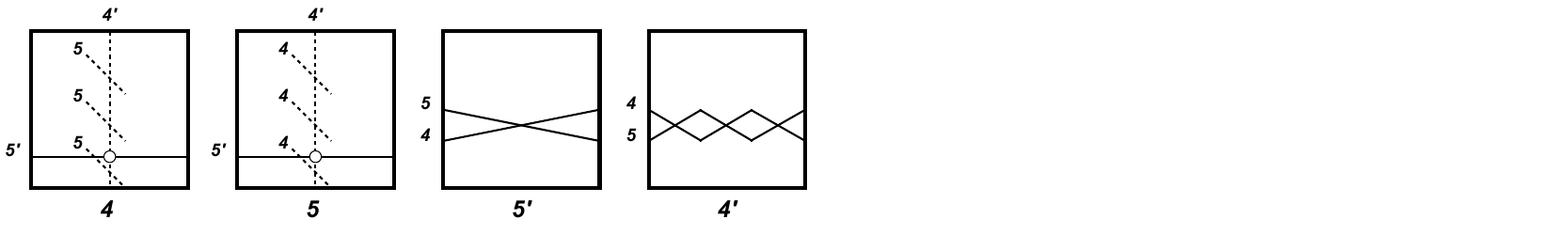}

The last blow-up of the curve $L_{45}$ introduces 3 pinch points on the curve where surfaces $P_4$ and $P_4'$ intersect and a single pinch point on the curve where surfaces $P_5$ and $P_5'$ intersect

Concluding, after resolution of generic fibers in the degenerate fiber there are two disjoint double curves over the triple line $L_{145}$. One of these curves has 3 pinch points and the other one has a a single one.

%%%%%%%%%%%%%%%%%%%%%%%%%%%%%%%%%%%%%%%%%%%%%%%%%%%%%%%%%%%%%%%%%%%%

\subsection{Two $p^1_4$ points collide to a $p^1_5$ point}

We start with the following equation.

$$xy(x + y)z(x + y + z + w) =0$$

Similarly as in previous degeneration, generic fibers do not have a fivefold point, but only a triple curve. The resolution of generic fibers starts with a blow-up of a triple curve, after which we blow up double curves in generic fibers.

The starting arrangement of planes in the central fiber looks as follows.

\includegraphics[scale=0.64]{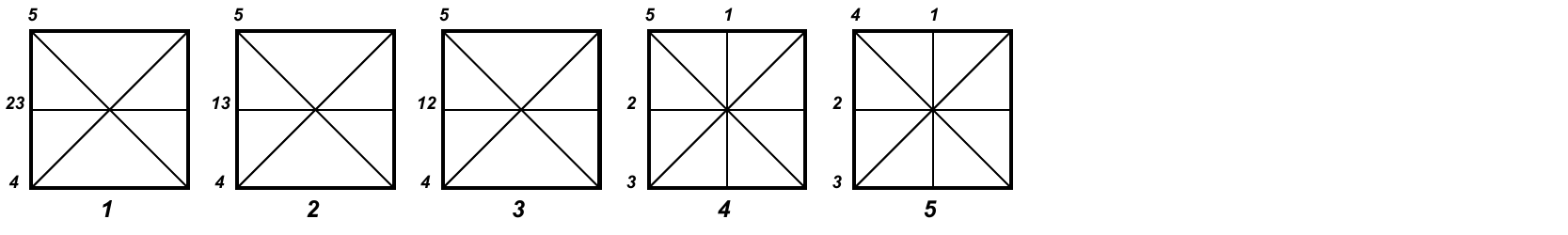}

The first blow-up of the triple line $L_{123}$ has the following effect.

\includegraphics[scale=0.64]{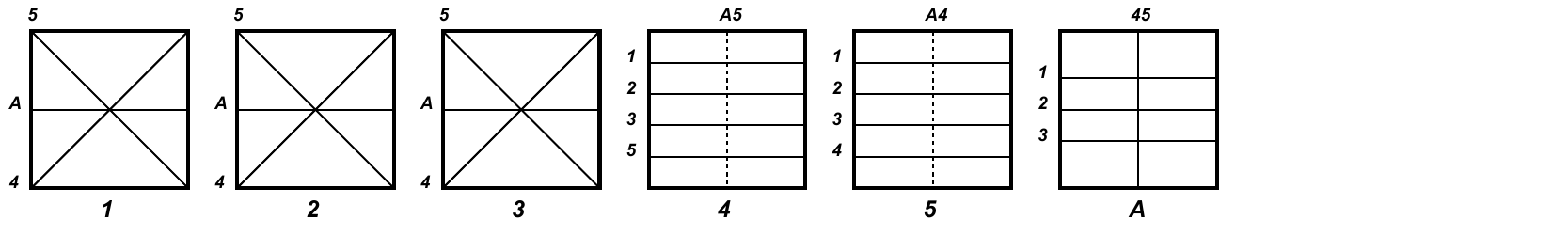}

Now, we blow up the double lines: $L_{14}, L_{15}, L_{1A}, L_{24}, L_{25}, L_{2A}, L_{34}, L_{35}, L_{3A}$ (in this order). After that the surfaces $P_1,P_2,P_3$ is separate from the others, so we remove them from diagrams.

\includegraphics[scale=0.64]{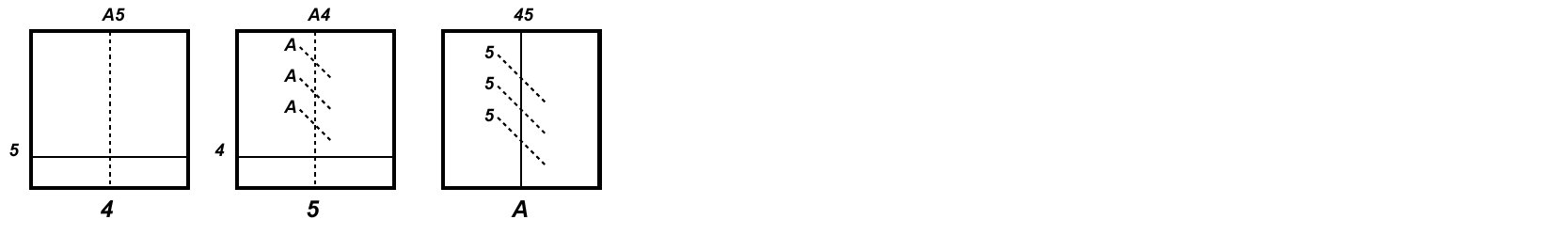}

The only remaining double lines are: $L_{45}, L_{4A}, L_{5A}$. We first blow up the curve $L_{4A}$. 

\includegraphics[scale=0.64]{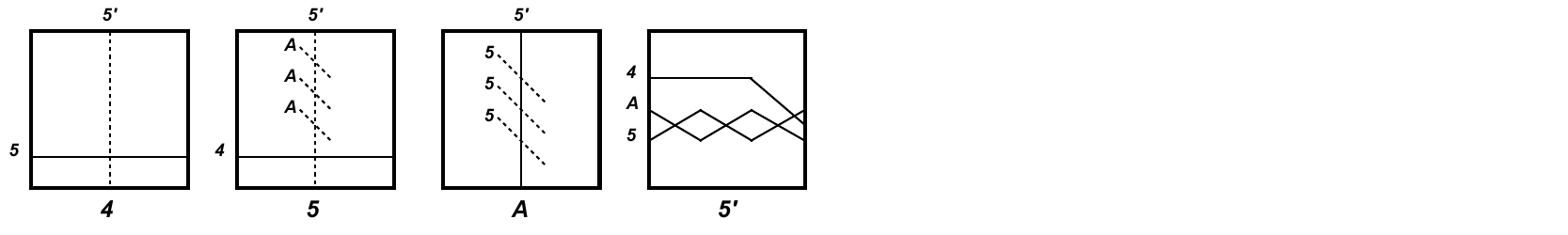}

Next, we blow up the line $L_{45}$, this creates a pinch point on the curve where surfaces $P_5$ and $P_5'$ intersect.

\includegraphics[scale=0.64]{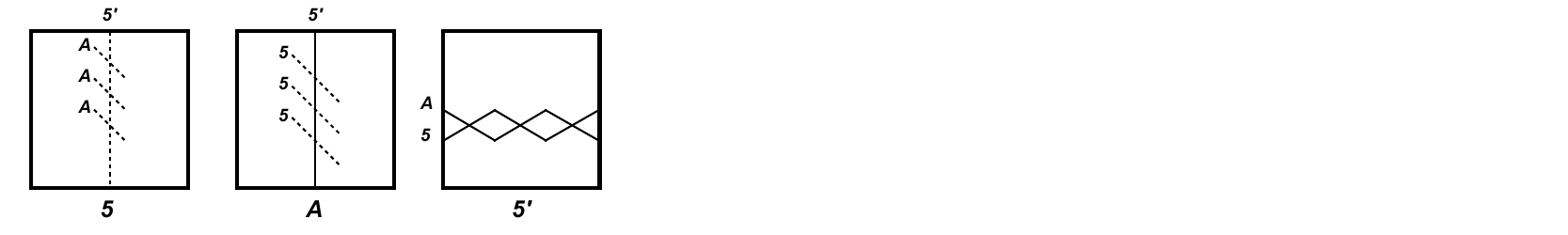}

The last blow-up of the line $L_{A5}$ creates three pinch points on the curve where surfaces  $P_5$ and $P_5'$ intersect.

Concluding, after resolution of generic fibers central fiber has one double curve with four pinch points on it. This double curve lies over the fivefold point.

%%%%%%%%%%%%%%%%%%%%%%%%%%%%%%%%%%%%%%%%%%%%%%%%%%%%%%%%%%%%%%%%%%%%

\subsection{A $p_{4}^{0}$ point degenerates to a $p_{5}^{2}$ point}

We start with the equation

$$xyz(x+y+z)(x+y+w)=0.$$

The central fiber looks as follows.

\includegraphics[scale=0.64]{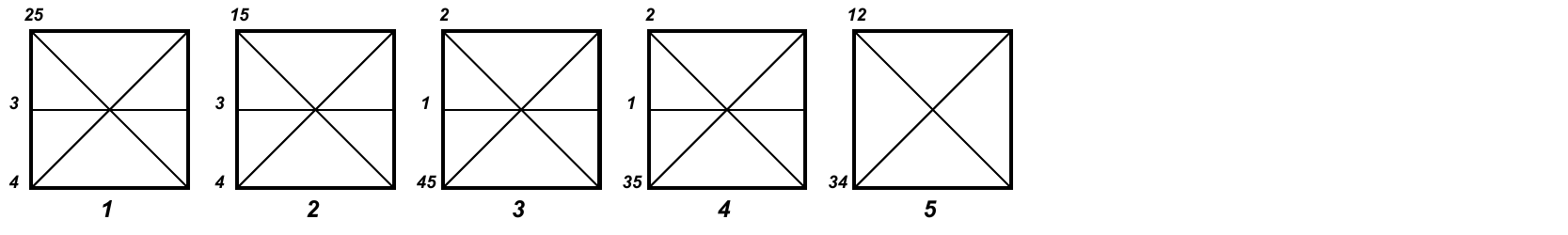}

We first blow up the quadruple point.

\includegraphics[scale=0.64]{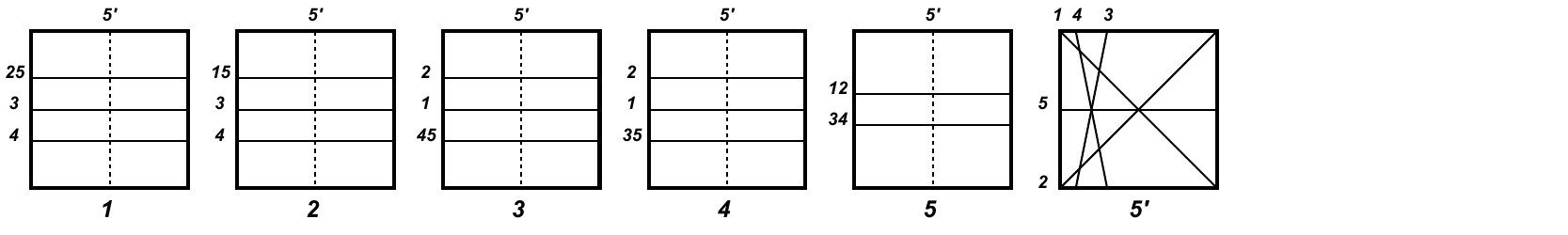}

The only remaining blow-ups are those of double curves, we start with the curves $L_{13},L_{14},L_{23},L_{24}$.

\includegraphics[scale=0.64]{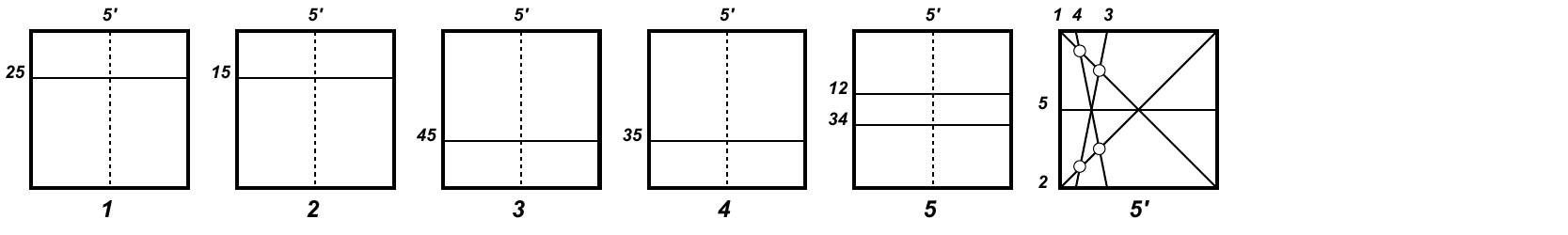}

Blow up the curve $L_{12}$

\includegraphics[scale=0.64]{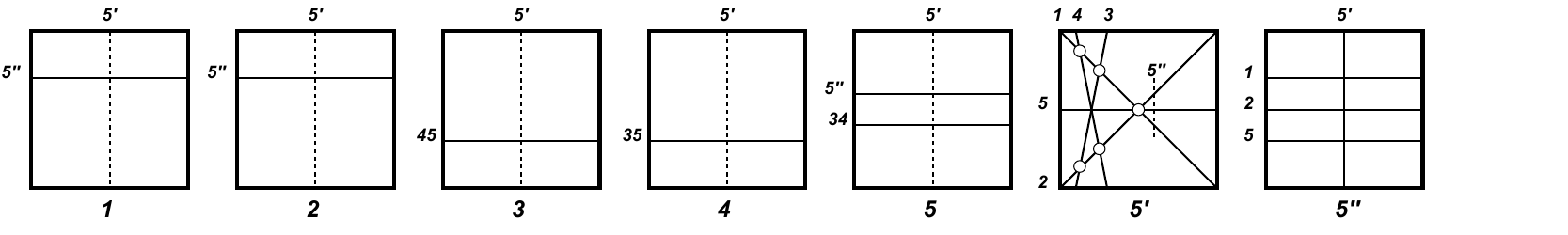}

As the next step, we blow up the curves $L_{15},L_{25}$. This creates two pinch points on the curve where surfaces $P_5'$ and $P_5''$ intersect. The surfaces $P_1,P_2$ are disjoint from the others after that, so we remove them from diagrams.

\includegraphics[scale=0.64]{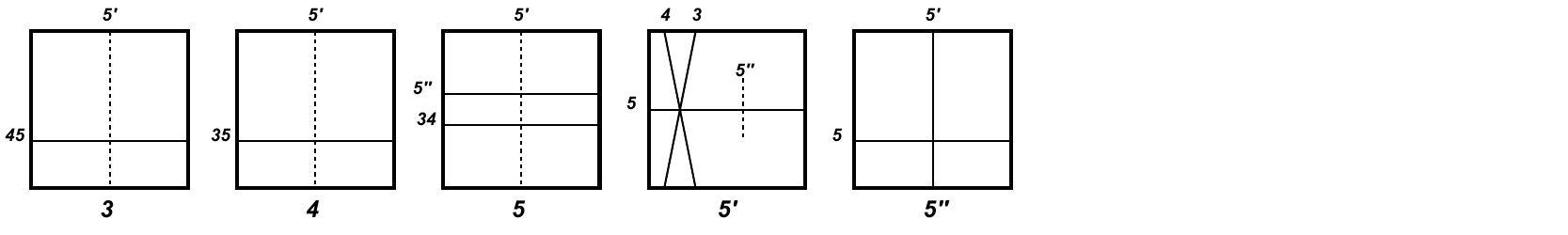}

Blow up the curve $L_{34}$.

\includegraphics[scale=0.64]{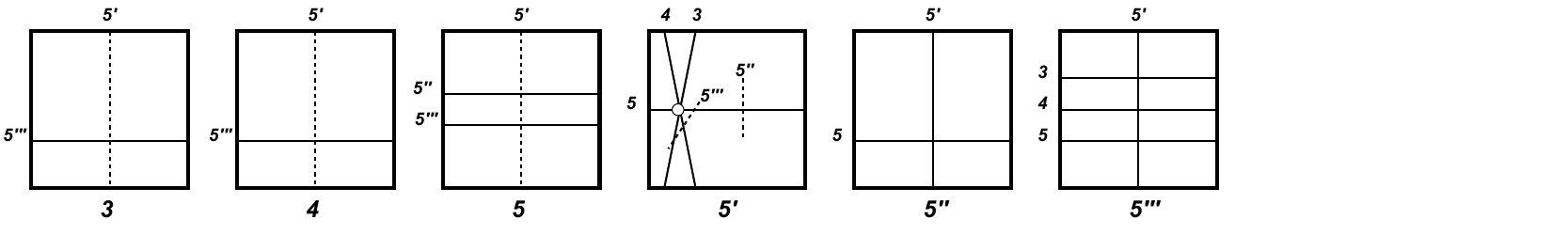}

The last two blow-ups are of the curves $L_{35}$ and $L_{45}$. This will create two pinch points on the curve where surfaces $P_5'$ and $P_5'''$ intersect.

Concluding, after resolving generic fibers there are five double curves in the following configuration. Three curves lying on the surface $P_5'$ are over fivefold point, the two other curves are over two respective triple lines.  The configuration of these 5 double lines can be visualized in the following picture, with dots representing pinch points.

\begin{center}
\begin{tikzpicture}[scale=1.3]
\node at (-0.2,0) {$55'$};
\draw[thick] (0,0) -- (4,0);

\node at (1,1.2) {$5'5''$};
\draw[thick] (1,-1) -- (1,1);

\node at (0,1.2) {$55''$};
\draw[thick] (2,-1) -- (0,1);

\node at (3,1.2) {$5'5'''$};
\draw[thick] (3,-1) -- (3,1);

\node at (2,1.2) {$55'''$};
\draw[thick] (4,-1) -- (2,1);

\filldraw[black] (1,-0.5) circle (2pt);
\filldraw[black] (1,0.5) circle (2pt);

\filldraw[black] (3,-0.5) circle (2pt);
\filldraw[black] (3,0.5) circle (2pt);

\end{tikzpicture}
\end{center}

%%%%%%%%%%%%%%%%%%%%%%%%%%%%%%%%%%%%%%%%%%%%%%%%%%%%%%%%%%%%%%%%%%%%

\subsection{New $p_{4}^{1}$ point}

We start with the equation

$$xy(x+y+w)z=0$$

The central fiber can be presented with this diagram.

\includegraphics[scale=0.64]{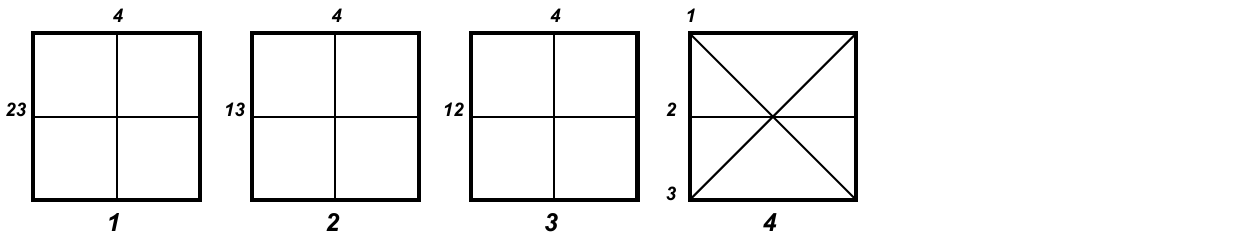}

Branching locus of generic fibers consists of four planes in general position. Therefore, the resolution consists of only blowing up the double lines. We start with blowing up the double line $L_{12}$.

\includegraphics[scale=0.64]{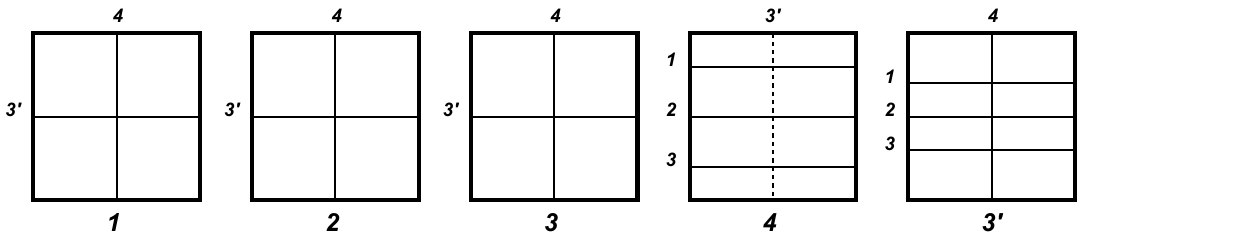}

Next, we blow up the lines $L_{23}, L_{13}$.

\includegraphics[scale=0.64]{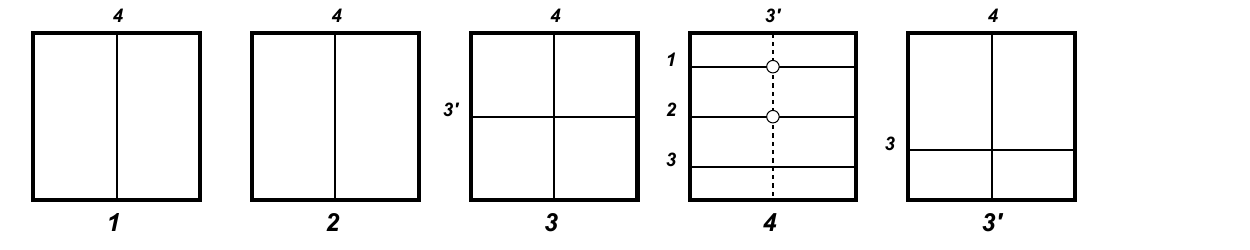}

After that, we blow up the lines $L_{14}, L_{24}$.

\includegraphics[scale=0.64]{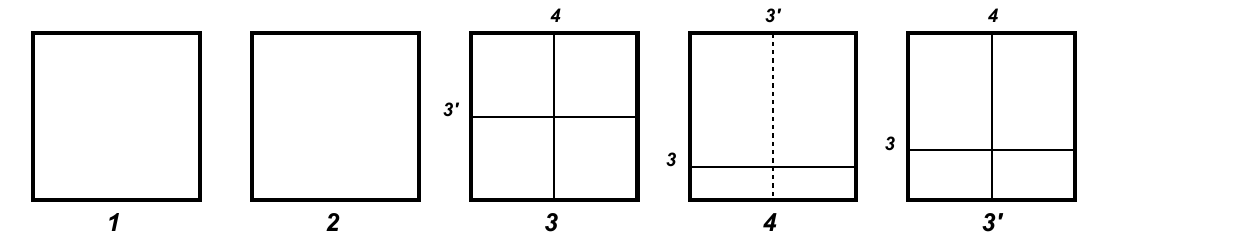}

The only line left to blow up is the line $L_{34}$. It creates a pinch point on the curve where surfaces $P_3$ and $P_3'$ intersect.

After a resolution of a generic fiber in the central fiber we have a single double curve over a triple line with a pinch point on it.

\subsection{A $p_{4}^{0}$ point degenerates to a $p_{4}^{1}$ point}

We start with the following equation. 
$$xy(x+y+zw)z=0$$

The central fiber can be presented with the following diagram.

\includegraphics[scale=0.64]{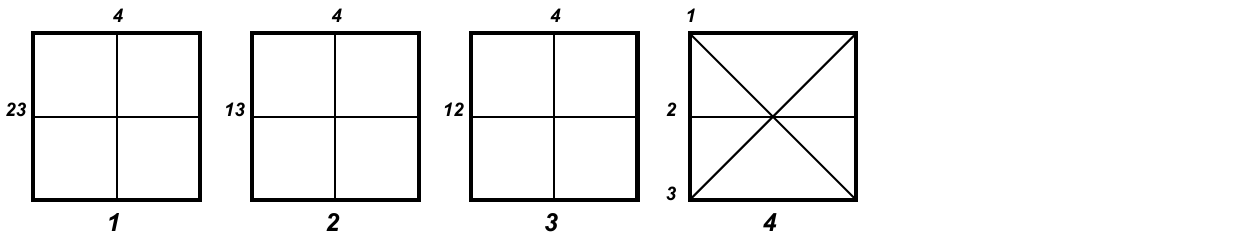}

The resolution starts with blowing up the fourfold point.

\includegraphics[scale=0.64]{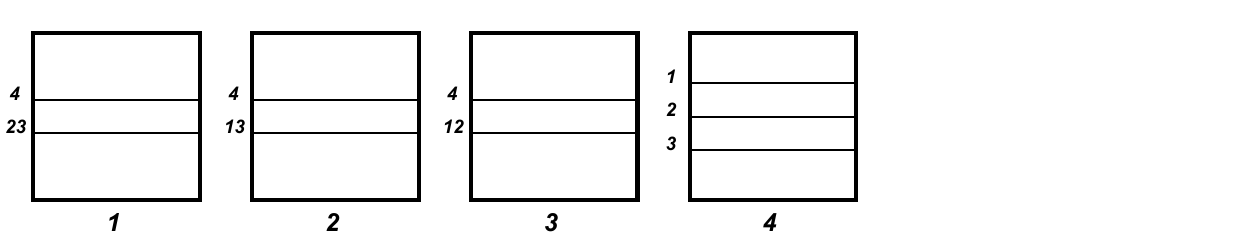}

Now, the only blow-ups left are those of all double lines. Apart from a new triple line $L_{123}$ there are no new singularities in the central fiber. Therefore, after resolving generic fiber there is a double curve in the central fiber over a triple line.

\subsection{A $p_{4}^{0}$ point degenerates to a $p_{5}^{1}$ point}

We start with the following equation.

$$xyz(x+y+z)(x-y+w)=0$$

The central fiber can be presented with the following diagram.

\includegraphics[scale=0.64]{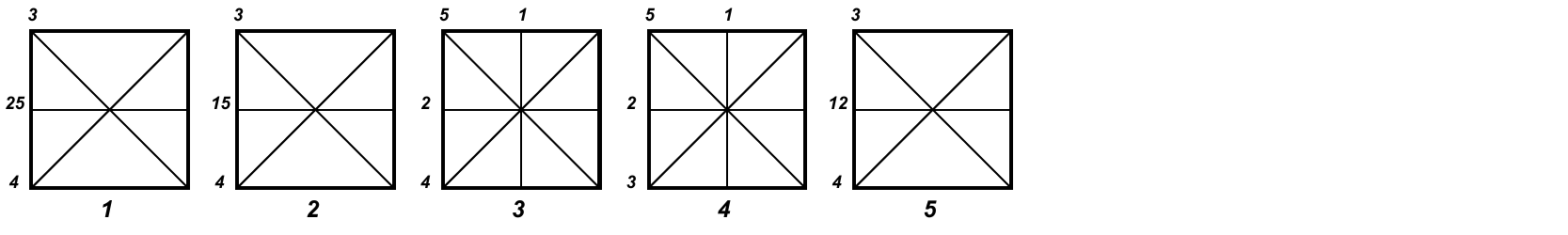}

We start with blowing up the fourfold point.

\includegraphics[scale=0.64]{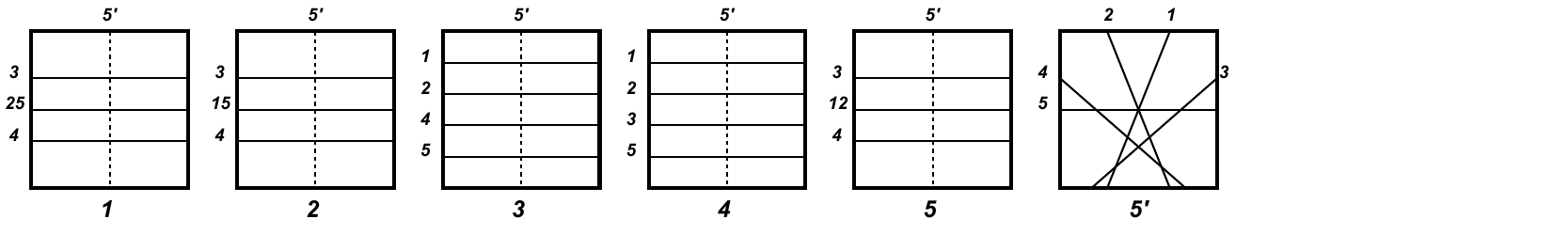}

Now, we blow up all the double curves. We first blow up $L_{35}$ and $L_{45}$ which creates two pinch points on the curve where surfaces $P_5$ and $P_5'$ intersect.

\includegraphics[scale=0.64]{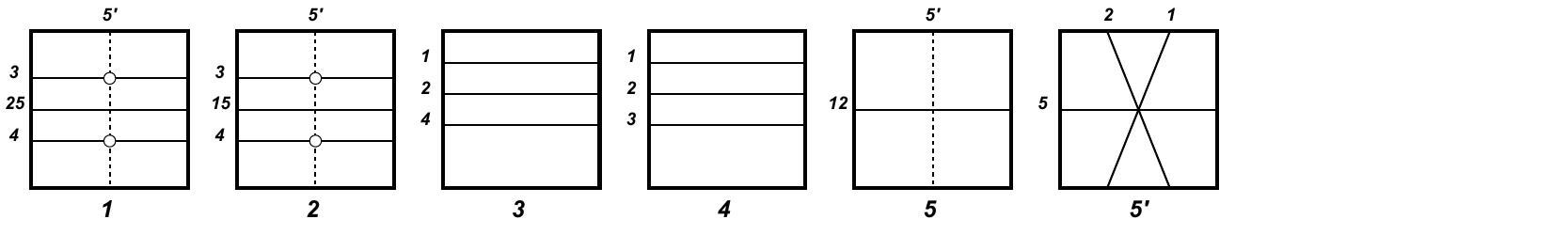}

Blow up curves $L_{13},L_{14},L_{23},L_{24},L_{34}$.

\includegraphics[scale=0.64]{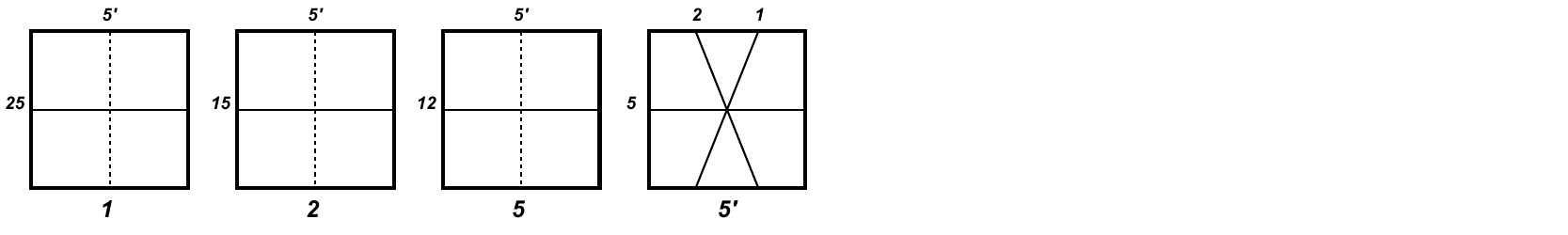}

Blow up the curve $L_{12}$.

\includegraphics[scale=0.64]{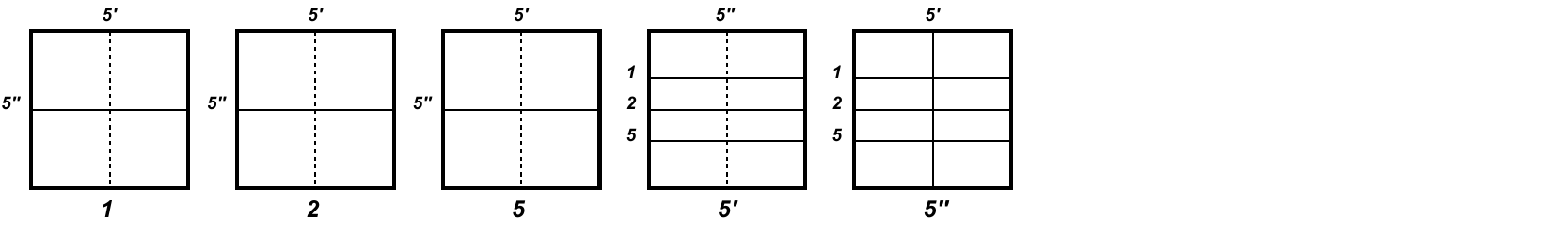}

The last blow-ups are those of curves $L_{15}$ and $L_{25}$. These blow-ups create two pinch points on the curve where surfaces $P_5'$ and $P_5''$ intersect.

Concluding, after resolving generic fibers there are three double curves intersecting in a single point in the degenerate fiber. Two of these curves are over the fivefold point and each of them has two pinch points. The third curve is over the triple line.

\subsection{A $p_{5}^{0}$ point degenerates to a $p_{5}^{2}$ point}

We start with the equation.
$$xyz(x+y+wz)(x+wy+z)=0$$

The central fiber can be presented with the following diagram.

\includegraphics[scale=0.64]{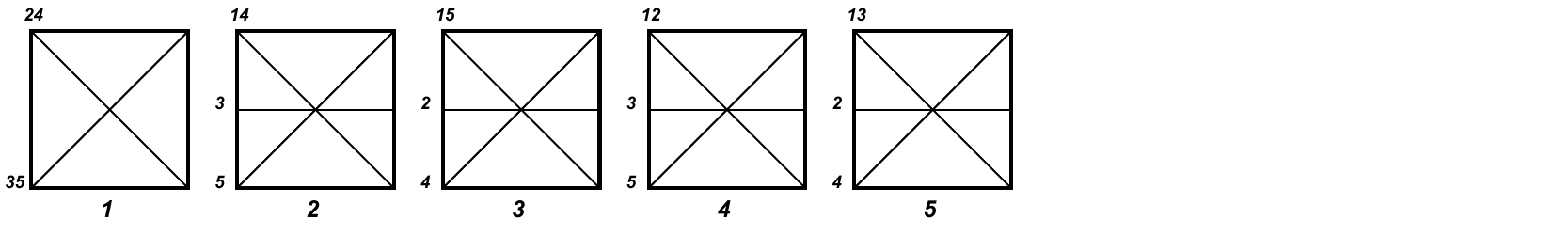}

The first blow-up is the one of the fivefold point.

\includegraphics[scale=0.64]{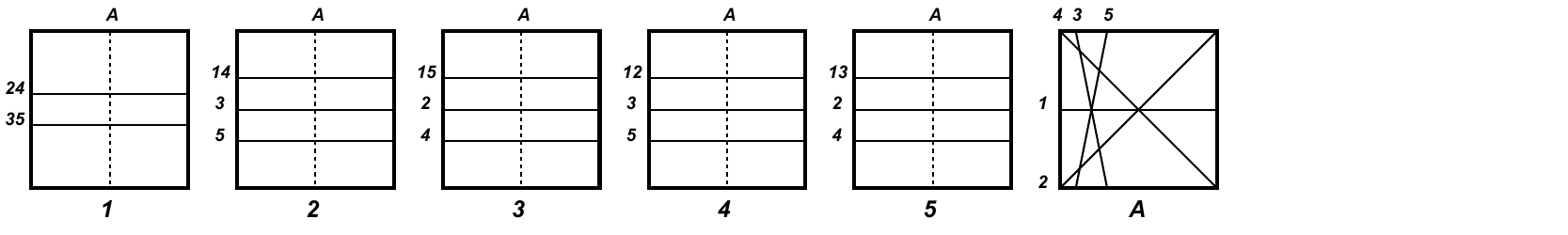}

Now we proceed with blowing up all double lines, we start with $L_{1A}$.

\includegraphics[scale=0.64]{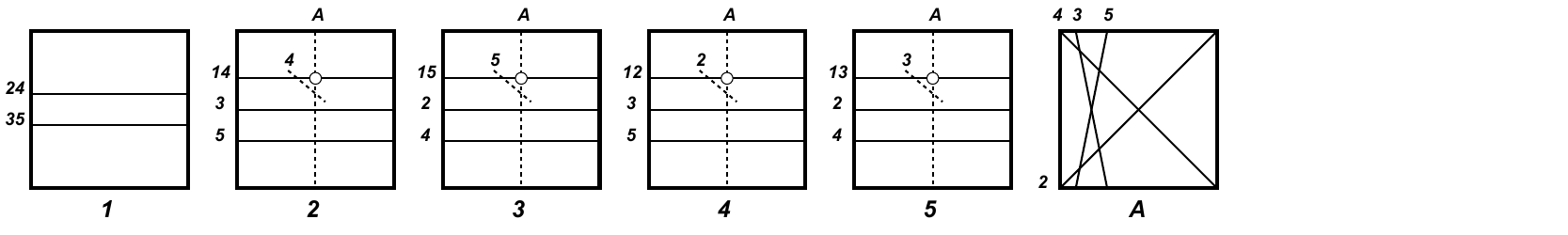}

Then, we blow up the lines $L_{2A},L_{3A},L_{4A},L_{5A}$ (in this order). This leaves the surface $P_A$ disjoint from the other ones.

\includegraphics[scale=0.64]{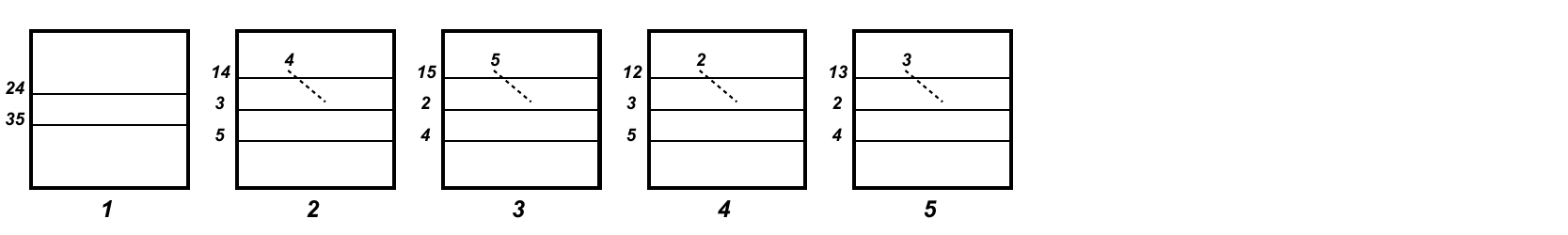}

Now, we blow up the lines $L_{23},L_{25},L_{34},L_{45}$.

\includegraphics[scale=0.64]{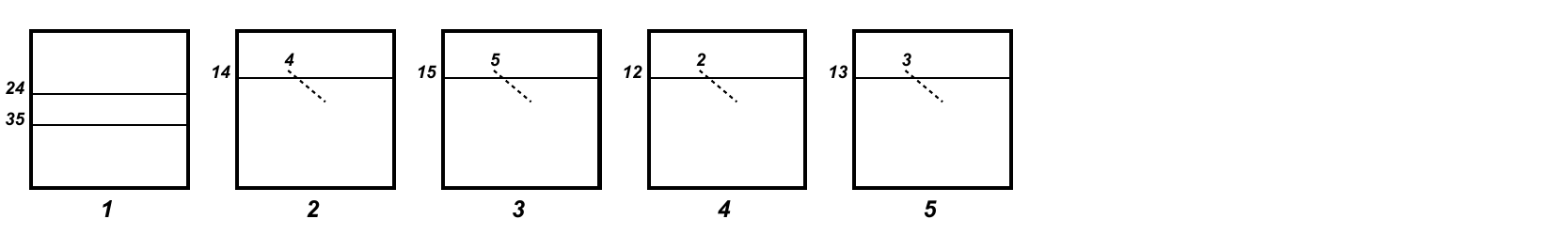}

We proceed by blowing up the lines $L_{14}, L_{15}$.

\includegraphics[scale=0.64]{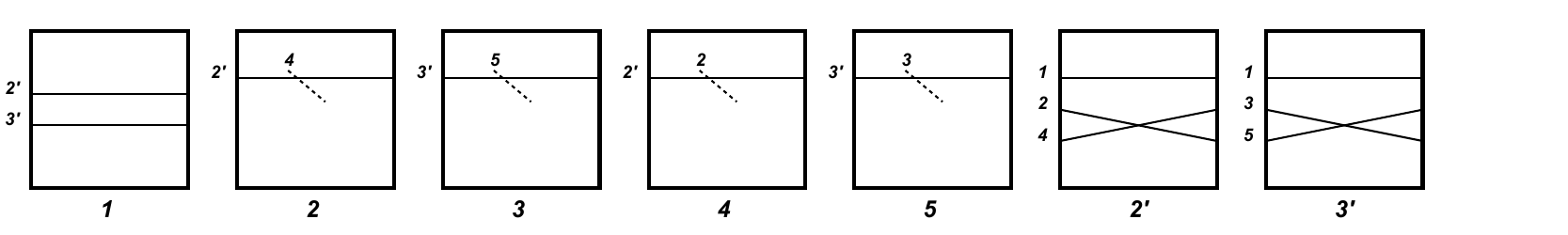}

The final blow-up of the lines $L_{12}, L_{13}, L_{24}, L_{35}$ creates one pinch point on the curve where surfaces $P_2$ and $P_2'$ intersect, and the other one on the curve where surfaces $P_3$ and $P_3'$ intersect.

Concluding, after resolving generic fibers, central fiber has two disjoint double lines with one pinch point on each of them.

\subsection{A $p_{5}^{0}$ point degenerates to a $p_{5}^{1}$ point}

We start with the equation.
$$xyz(x+y+wz)(x+2y+z)=0$$

The central fiber can be presented with the following diagram.

\includegraphics[scale=0.64]{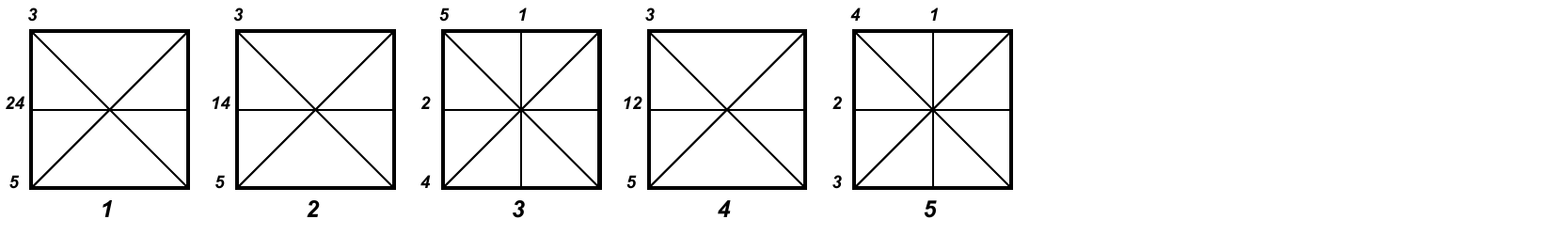}

The first blow-up of the fivefold point has the following effect.

\includegraphics[scale=0.64]{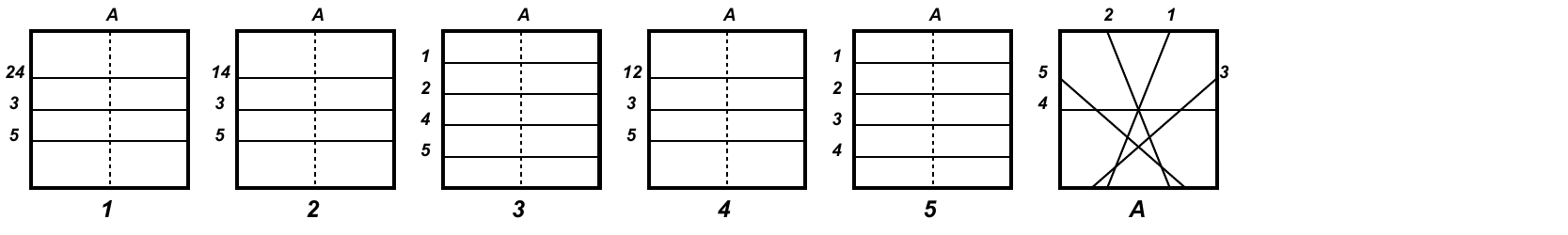}

Next we blow up the curve $L_{1A}$.

\includegraphics[scale=0.64]{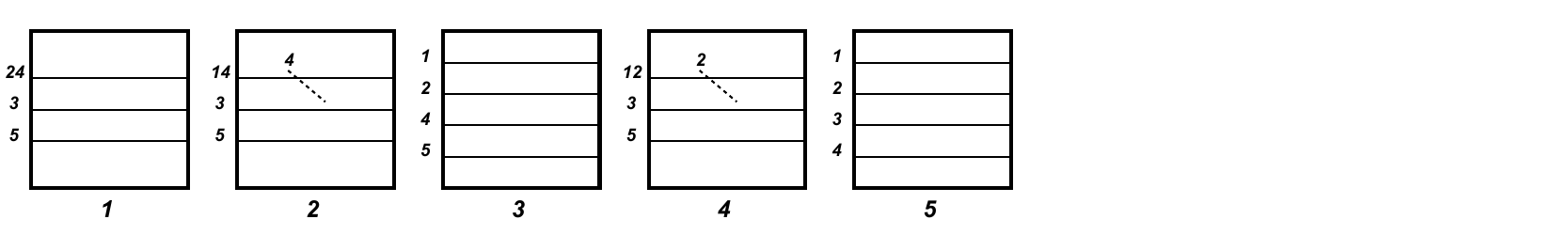}

We blow up the lines $L_{1A},L_{3A},L_{4A},L_{5A}$, the surface $P_A$ becomes disjoint from the other ones.

\includegraphics[scale=0.64]{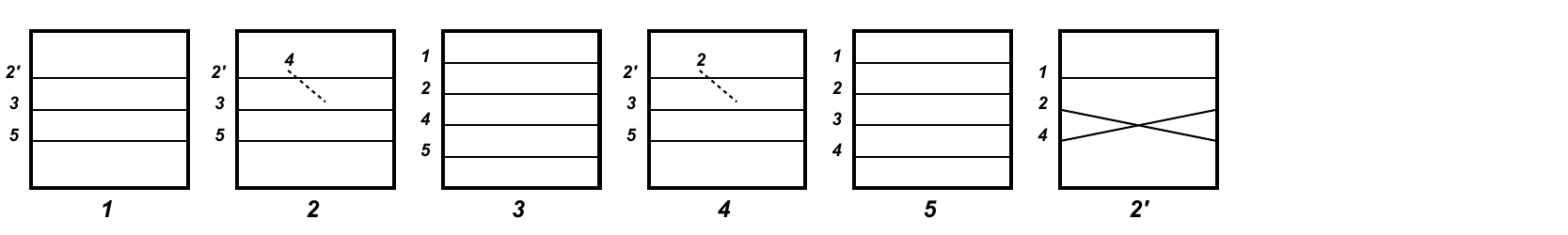}

By blowing up all remaining double lines from generic fibers we create one pinch point on a curve where surfaces $P_2$ and $P_2'$ intersect.

Concluding, after resolving generic fibers there is one double curve in the degenerate fiber. This double curve has a single pinch point and lies over the triple line.

\section{Semistable reduction}
Let $\mc{X} \rr \Delta$ be a family of singular double covers of $\PP^3$ branched over an octic arrangement, let $\mc{Y} \rr \Delta$ be a family coming from resolving generic fibers of $\mc{X}$, we will refer to $\mc{Y}$ as a family of \emph{generically resolved double octics}. Generic fibers of $\mc{Y}$ are smooth double octics while the degenerate fiber is a threefold which might have singularities described in the previous section. In this section we give a construction of a semistable degeneration of $\mc{Y}$.

\begin{definition}\cite[p.102]{Morrison}
A semistable family $\pi \colon \mc{S} \rr \Delta$ is a proper family, where $\mc{S}$ is a smooth variety, whose generic fibers over $\Delta^*$ are smooth and the central fiber over $0$ is a union of smooth varieties intersecting transversally along smooth subvarieties.
\end{definition}

A precise meaning of the semistable reduction is given by the following theorem.

\begin{theorem}\cite[p.53]{semistable}
Given a degeneration $\pi \colon \mc{Y} \rr \Delta$ there exists a base change $b \colon \Delta \rr \Delta, t \rr t^n$, a semistable family $\psi \colon \mc{S} \rr \Delta$ and a commutative diagram

\begin{center}
\begin{tikzcd}
\mc{S} \arrow[r,dashed]{}{f} \arrow[rd]{}{\psi} & \mc{Y} \times_b \Delta \arrow[r] \arrow[d] & \mc{Y} \arrow[d] \\
                    & \Delta \arrow[r]{}{b} & \Delta          
\end{tikzcd}
\end{center}

where $f \colon \mc{S} \rr \mc{Y} \times_b \Delta$ is a composition of blow-ups and blow-downs along subvarieties in the central fiber.
\end{theorem}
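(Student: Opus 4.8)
The plan is to deduce the statement from two ingredients that lie outside the combinatorics of double octics developed in this paper: Hironaka's embedded resolution of singularities, and the combinatorial semistable reduction theorem for toroidal embeddings. The structure of the argument is: first make the total space smooth with a normal-crossings central fibre (at the cost of multiplicities), then translate the remaining problem into a statement about subdividing a rational cone complex equipped with a linear ``order of vanishing'' functional, and finally invoke the Knudsen--Mumford unimodular triangulation lemma.

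First I would apply Hironaka's theorem to the pair $(\mc{Y}, \pi^{-1}(0))$: after a proper birational modification that is an isomorphism away from the central fibre — hence a composition of blow-ups along smooth centres supported on $\pi^{-1}(0)$ — one may assume $\mc{Y}$ is smooth and $\pi^{-1}(0) = \sum_i m_i V_i$ is a divisor with reduced-support simple normal crossings, the $V_i$ smooth and all intersections transverse, but with multiplicities $m_i \geq 1$. Then $(\mc{Y}, (\pi^{-1}(0))^{\mathrm{red}})$ is a toroidal embedding, and the function $\operatorname{ord}_t$ (order of vanishing of the pullback of the coordinate $t$ on $\Delta$) is recorded on its cone complex $\Sigma$: on the cone $\sigma_I$ attached to the stratum $\bigcap_{i \in I} V_i$, with lattice generated by the divisorial valuations $\operatorname{ord}_{V_i}$, the functional $\operatorname{ord}_t$ sends $\operatorname{ord}_{V_i} \mapsto m_i$. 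In this language, $\mc{Y} \rr \Delta$ is semistable exactly when every maximal cone of $\Sigma$ is a standard (unimodular) simplicial cone on whose primitive ray generators $\operatorname{ord}_t$ takes the value $1$.

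Next I would use that a base change $b \colon \Delta \rr \Delta$, $t \rr t^n$, replaces $\operatorname{ord}_t$ by $n \cdot \operatorname{ord}_t$ (after normalizing, the cone complex itself is unchanged), while a projective subdivision of $\Sigma$ corresponds on the geometric side to a toroidal modification of $\mc{Y} \times_b \Delta$ which, when realized by successive star subdivisions along strata of the central fibre, is a composition of blow-ups along subvarieties of the central fibre — possibly followed by blow-downs of the extra components that become superfluous. So the whole statement reduces to a purely combinatorial claim: for a suitable $n$ there is a projective subdivision $\Sigma'$ of $\Sigma$, compatible with the stratification, such that every maximal cone of $\Sigma'$ is unimodular and $n \cdot \operatorname{ord}_t$ equals $1$ on each of its primitive generators — equivalently, after dilating the polyhedron $\{\, v : \operatorname{ord}_t(v) \leq 1 \,\}$ by $n$ one obtains a lattice polytope admitting a unimodular triangulation.

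The main obstacle is precisely this last combinatorial step — the existence, after dilation by a suitable integer $n$, of a unimodular triangulation of a lattice polytope, together with enough control that the induced toric modification can be written as blow-ups followed by blow-downs along subvarieties of the central fibre. This is the technical core of \cite{semistable} (its Chapter III), and I would quote it rather than reprove it. Once it is in hand, smoothness of $\mc{S}$ follows from unimodularity of the cones of $\Sigma'$, and the fact that the reduced central fibre is simple normal crossings with all multiplicities equal to $1$ follows from $n \cdot \operatorname{ord}_t$ being unit-valued on the primitive generators; assembling the base change, the toroidal modification, and the resulting $f \colon \mc{S} \rr \mc{Y} \times_b \Delta$ gives the commutative diagram in the statement with $f$ a composition of blow-ups and blow-downs along subvarieties of the central fibre.
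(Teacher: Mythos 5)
The paper states this result purely as a quotation from \cite{semistable} and supplies no proof of its own, so there is nothing internal to compare against; your outline is an accurate reconstruction of the argument in that reference (Hironaka's resolution to reach a normal crossings central fibre with multiplicities, the translation into the cone complex of the resulting toroidal embedding with the $\operatorname{ord}_t$ functional, base change rescaling that functional, and the projective unimodular subdivision theorem of Chapter III realized geometrically by blow-ups and blow-downs in the central fibre). Since you, like the paper, ultimately defer the combinatorial core to the cited source, this is essentially the same approach and is correct as far as it goes.
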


In the proof of the following lemma we describe blow-ups and a base change that give a semistable reduction $\mc{S}$ of $\mc{Y}$.

\begin{proposition}
    Let $\mc{Y} \rr \Delta$ be a family of generically resolved double octics. There exists a semistable family $\mc{S} \rr \Delta$ whose smooth fibers are isomorphic with those of $\mc{Y}$ and whose central fiber is a union of a smooth Calabi-Yau threefold and other components which are either quadric bundles over $\PP^1$ or $\PP^1 \times \PP^1 \times \PP^1$ blown-up at a finite number of points.
\end{proposition}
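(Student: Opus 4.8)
The plan is to run a local analysis of the degenerate fiber $Y_0$ of $\mc{Y}$ component by component. By the combinatorial classification in section \ref{diagrams}, the singularities of $Y_0$ are of exactly four local types: a double curve coming from a strict transform of a triple line (possibly carrying some pinch points), a double curve lying over a fivefold point, a pair of nodes arising from blowing up two transversally intersecting curves (the $p_4^0$ case), and configurations of several such double curves meeting along the exceptional surface of a fivefold-point blow-up. Since all of these are supported on strict transforms of planes (or on exceptional divisors that were themselves $\PP^1\times\PP^1$-bundles over $\PP^1$ or products of projective lines), I would first record the structure of the ambient threefold $\wdt{\PP^3}$ near each singular locus, and then normalize and resolve the double cover.

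First I would treat the double curve with pinch points. Locally the branch divisor near such a curve is a surface with a pinch (Whitney umbrella) singularity, so $Y_0$ has an $A_1$ transverse singularity along the curve away from the pinch points and a worse isolated singularity at each pinch. Blowing up the double curve in the total space $\mc{Y}$, then normalizing, separates the two sheets of the branch surface meeting along that curve; the exceptional component introduced in $Y_0$ is a conic bundle over the curve $\cong\PP^1$ — this is where the ``quadric bundle over $\PP^1$'' components come from. The pinch points are resolved after a further base change (the standard trick: $t\mapsto t^2$ turns a pinch into two $A_1$ points, which are then small-resolved), consistent with the base change appearing in the semistable reduction theorem quoted above. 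For the nodes in the $p_4^0$ case, the remark in section \ref{p04 point} already shows each node has a small projective resolution because it sits on a smooth $\PP^1\times\PP^1$; performing these small resolutions adds no new components and makes the local picture a normal crossing.

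Next I would handle the components lying over fivefold points. Here the exceptional divisor of the blow-up $\sigma_1$ in the ambient space is a $\PP^1\times\PP^1\times\PP^1$-like piece (more precisely $\PP(1,1,1,1)$ blown up, i.e. a product), and after the further blow-ups of triple lines and double lines performed in the algorithm this component gets blown up at finitely many points — the centers being the isolated intersection points visible on the diagrams. The double cover restricted over this locus, once normalized and with pinch points removed by the base change, contributes the ``$\PP^1\times\PP^1\times\PP^1$ blown up at points'' components. Finally the strict transform of the generic double octic $Y_w$ (which is smooth, being the crepant resolution of Theorem \ref{resolution of octic}) survives in $Y_0$ as the Calabi-Yau component. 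Assembling: after the explicit sequence of blow-ups of the singular curves and points, normalization, the base change $t\mapsto t^n$ dictated by the orders of the pinch points, and the small resolutions of the residual nodes, all components of the central fiber are smooth, they meet transversally, and they are of the three asserted types.

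The main obstacle I anticipate is the global bookkeeping at the places where \emph{several} double curves meet — the configurations drawn in the picture at the end of subsection on ``$p_4^0$ degenerates to $p_5^2$'', with five double lines crossing on $P_5'$. One must check that blowing up these curves in the correct order (and not all at once) yields a central fiber with only transverse intersections along smooth subvarieties, that no triple-point-of-curves degenerates into a non-normal-crossing singularity, and that the exceptional components glue to honest quadric bundles rather than something with reducible fibers. Verifying that the base change order $n$ chosen to kill the pinch points simultaneously resolves every locus — and that it does not reintroduce singularities elsewhere — is the delicate part; the rest is a finite, if lengthy, local computation already organized by the octic diagrams.
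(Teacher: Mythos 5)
Your overall shape---classify the local singularity types, blow up the double curves, make a degree-two base change, and use the $\PP^1\times\PP^1$ surfaces to deal with the nodes---matches the paper, but two of your key mechanisms are wrong, and a third is muddled in a way that hides where the actual work happens.

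First, the role of the base change. After blowing up a double curve $u^2=xy$ in the total space $\mc{Y}$, the exceptional divisor enters the central fiber as a \emph{non-reduced} $\PP^2$-bundle of multiplicity two; normalization alone does nothing here because $\mc{Y}$ is already smooth, so ``blow up then normalize'' does not separate anything. The quadric-bundle components arise only after the base change $t\mapsto t^2$ followed by the double cover of the blown-up family branched along the reduced components: the fiberwise double cover of $\PP^2$ branched along the conic $Q_i\cap\wdt{Y}$ is a quadric surface. You instead call the exceptional component itself a ``conic bundle'' (conflating the threefold component $Q_i$ with the surface $Q_i\cap\wdt{Y}$) and reserve the base change for the pinch points. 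The pinch points in fact require no separate treatment: they merely make some fibers of the conic bundle $Q_i\cap\wdt{Y}$ degenerate into two lines (so the corresponding quadric fibers of $Q_i$ are singular quadrics), while the components themselves stay smooth. Your proposed ``pinch $\to$ two $A_1$ points $\to$ small resolution'' step is both unjustified and unnecessary.

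Second, the provenance of the $\PP^1\times\PP^1\times\PP^1$ components. These come precisely from the nodes: each pair of nodes sits on a smooth surface isomorphic to $\PP^1\times\PP^1$ inside the central fiber, and blowing up $\mc{Y}$ along that surface simultaneously small-resolves the nodes \emph{and} introduces a new component of the central fiber, namely a $\PP^1$-bundle over that surface, i.e.\ $\PP^1\times\PP^1\times\PP^1$ modified at the points over the nodes. Your claim that the small resolutions ``add no new components'' therefore contradicts the statement you are proving, and your attempt to recover these components from the fivefold points cannot work: the exceptional divisor of blowing up a point of $\PP^3$ is a $\PP^2$, not a product of three lines, and the fivefold-point exceptional surfaces are already absorbed into the branch locus during the resolution of the generic fibers, not into new components of the semistable central fiber. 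Your closing concern about ordering the blow-ups of intersecting double curves is legitimate and is indeed addressed in the examples (e.g.\ blowing up $l_1$, then $l_2$, then the remaining curves once they are disjoint), but it does not compensate for the two structural errors above.
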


\begin{proof}
Degenerate fibers of $\mc{Y}$ can have only the following local types of singularities. Here, the term \emph{local} means that we take equations up to an analytic biholomorphism of a neighbourhood of a singularity.

\begin{enumerate}
    \item Node: $xy=zw$.
    \item Double line: $u^2=xy$.
    \item Three double lines intersecting at a point: $u^2 = xyz$.
    \item Pinch point on a double line: $u^2=zw^2$.
\end{enumerate}

In section \ref{diagrams} we have shown that nodes in degenerate fibers constructed in previous sections lie on a smooth surface isomorphic to $\PP^1 \times \PP^1$, therefore, they have a small resolution. The new components after a blow-up of such surface is $\PP^1 \times \PP^1 \times \PP^1$ blown-up in $2d$ points, where $d$ is the number of nodes that lied on that surface.

The other type of singularities consists of double curves. In each of these cases we blow up these curves in some chosen order. The central fiber after that is of the form $\wdt{Y} \cup Q_1 \cup ... \cup Q_n$, where $\wdt{Y}$ is a double octic and $Q_i$ are non-reduced $\PP^2$ bundles with multiplicity 2. The intersections $Q_i \cap \wdt{Y}$ are smooth conic bundles which have singular fibers over pinch points. Let us call the blow-up of $\mc{Y}$ along these singularities in the central fiber as $\mc{Y}'$.

The family $\mc{S}$ is defined as a double cover of $\mc{Y}'$ branched along reduced components of the central fiber of $\mc{Y}'$. This can be done by taking a base change $b \colon t \rr t^2$. The reduced components of the central fiber of $\mc{S}$ are the same as those of $\mc{Y'}$, while $\PP^2$ are replaced by quadric bundles. Therefore, the components of $\mc{S}$ are as in the lemma.

\end{proof}

We finish this section by proving that the total space of $\mc{S}$ is smooth as a fourfold over $\CC$. Our strategy is to show that blow-ups we perform while resolving generic fibers also resolve all singularities of the total space.

Let $\mc{X}$ be a double cover branched along a union of smooth hypersurfaces $\mc{D}= \bigcup\limits^n_{i=1} \mc{D}_i$ such that intersections of any subset of $\mc{D}$ are smooth.

Let us define $\mc{A}$ to be the set of irreducible components of intersections $\cap_{j \in J} D_j$ for any subset $J \subset \{1, ..., n\}$. Then, for $C \in \mc{A}$, we define $m(C) := \#\{i : C \subset D_i\}$.
A subvariety $C \in \mc{A}$, with $m(C) \geq 3$, is called near-pencil if it is contained in another intersection $C' \in \mc{A}$ with $dim(C) = dim(C') - 1$ and $m(C) = m(C')+1$.

In the section \ref{extending resolution} we proved that one-parameter families of double covers branched along octic arrangements have a structure of a double cover of $\PP^4$ branched along a union of eight hypersurfaces. This allows us to use the following theorem.

\begin{theorem}[\cite{li}]
\label{ingalis}
Assume that for every subvariety $C$ of double cover $X$, where $m(C) \geq 2$, $C$ is a smooth transversal intersection of components in the branch locus of $X$. If every such $C$ is near-pencil or satisfies $\floor{\frac{m(C)}{2}} = dim(X) - dim(C) -1$, then $X$ admits a crepant resolution.
\end{theorem}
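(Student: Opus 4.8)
The plan is to exhibit a crepant resolution of $X$ as an explicit finite sequence of blow-ups of the ambient space. Recall from Section~\ref{extending resolution} that $X$ is a double cover $\pi\colon X\rr V$ of a smooth variety $V$ with $\dim V=\dim X$, branched along the union $B=\bigcup_i D_i$ of the smooth components, and that $\operatorname{Sing}(X)=\pi^{-1}\!\big(\bigcup_{m(C)\ge 2}C\big)$ is stratified by the poset $\mc A$ of intersection strata. I would resolve by blowing up intersection strata (and strict transforms thereof), one family of mutually disjoint strata at a time, ordered first by increasing dimension and then by decreasing multiplicity, at each step modifying the branch divisor so that the cover remains a double cover, and I would verify crepancy step by step.

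The central computation --- the reason the numerical hypothesis appears --- is as follows. Let $\sigma\colon\widetilde V\rr V$ be the blow-up of a stratum $C$ with $m(C)=m$ and exceptional divisor $E$, and set $c:=\operatorname{codim}_V C$, which equals $\dim X-\dim C$ since $C$ lies in the branch locus. Then $\operatorname{mult}_C B=m$, so
\[
K_{\widetilde V}=\sigma^{*}K_V+(c-1)E,\qquad \sigma^{*}B=\widetilde B+mE,
\]
and the reduced divisor $B':=\sigma^{*}B-2\floor{\frac m2}\,E=\widetilde B+(m\bmod 2)E$ is again $2$-divisible in the Picard group, so there is a double cover $\pi'\colon X'\rr\widetilde V$ branched along $B'$, birational to $X$, with
\[
K_{X'}=(\pi')^{*}\!\Big(K_{\widetilde V}+\tfrac12 B'\Big)=(\pi')^{*}\!\Big(\sigma^{*}\big(K_V+\tfrac12 B\big)+\big(c-1-\floor{\tfrac m2}\big)E\Big).
\]
Hence $X'\rr X$ is crepant exactly when $\floor{\frac m2}=c-1=\dim X-\dim C-1$, the condition in the statement; moreover the components of $B'$ are smooth and meet more simply than those of $B$ near $C$.

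For a stratum satisfying the numerical identity I would perform this blow-up directly. A near-pencil stratum that fails the identity I would never blow up; instead, if $C$ is near-pencil --- contained in $C'\in\mc A$ with $\dim C=\dim C'-1$ and $m(C)=m(C')+1$ --- I would pass to $C'$, whose multiplicity is one smaller and which by hypothesis is itself near-pencil or numerically good, and iterate. Since multiplicity-$2$ strata always satisfy the numerical identity (there $\floor{\frac m2}=1$ and two transversal components meet in codimension $2$), this terminates at a numerically-good ancestor $\widehat C\supseteq C$; blowing up $\widehat C$ is crepant by the computation above and removes the singularity of $X$ along $C$ as well, leaving only milder singularities on the new exceptional divisor, which are cleared by subsequent blow-ups.

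The remaining work --- and what I expect to be the real obstacle --- is the combinatorial bookkeeping that makes this terminate. One must check that every stratum of multiplicity $\ge 2$ produced by a blow-up (those lying on the new exceptional divisor, together with strict transforms of the old strata) is again near-pencil or numerically good, so the hypotheses persist; that the chosen order keeps all centers smooth, using that $C\subsetneq C'$ forces $\dim C<\dim C'$ and that strata of a fixed dimension become mutually disjoint once the deeper strata have been resolved; and that an invariant strictly decreases --- for instance the lexicographically ordered pair consisting of the smallest dimension of a surviving singular stratum and the largest multiplicity occurring at that dimension --- since a blow-up creates no singular stratum of dimension below its center and strictly lowers the top multiplicity in the center's dimension. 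Granted all this, the process halts with $B$ a disjoint union of smooth hypersurfaces, so the resulting double cover $\widetilde X$ is smooth; the induced proper birational morphism $\widetilde X\rr X$, a composite of crepant morphisms that are isomorphisms over $X\setminus\operatorname{Sing}(X)$, is itself crepant, hence a crepant resolution of $X$.
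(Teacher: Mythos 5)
First, a point of comparison: the paper does not prove Theorem~\ref{ingalis} at all --- it is imported verbatim from \cite{li} and used as a black box in the smoothness proposition that follows. So your proposal cannot be measured against an internal argument; it has to stand on its own.

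Your central discrepancy computation is correct and is indeed the reason the numerical hypothesis takes the form it does: with $K_X=\pi^*(K_V+\tfrac12 B)$, $K_{\widetilde V}=\sigma^*K_V+(c-1)E$ and $B'=\sigma^*B-2\floor{m/2}E$, the induced double cover $X'\rr X$ is crepant precisely when $\floor{m/2}=c-1=\dim X-\dim C-1$. Your intuition for the near-pencil case is also the right one, and it is consistent with how the paper's own octic algorithm behaves (a $p_4^1$ point is never blown up directly; it is absorbed into the blow-up of the triple line containing it). The gap is that everything after this is asserted rather than proved, and you say so yourself: ``granted all this, the process halts.'' But ``all this'' is exactly where the content of the theorem lives. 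Concretely, three things are missing. (i) Persistence of the hypothesis: after blowing up a numerically good stratum, the strata on the exceptional divisor and the strict transforms of the old strata must again each be near-pencil or numerically good, and must again be smooth transversal intersections; nothing in your argument rules out the blow-up creating a stratum that is neither. (ii) The near-pencil step: you claim that blowing up the ancestor $\widehat C$ ``removes the singularity of $X$ along $C$ as well, leaving only milder singularities,'' but this is precisely the nontrivial geometric content of the near-pencil condition --- one must check that after blowing up $C'$ the extra component through $C$ meets the separated strict transforms and the exceptional divisor only in configurations that are still covered by the hypothesis, and that no \emph{non-crepant} stratum is created on $E$. (iii) Termination: your proposed decreasing invariant is plausible but unverified, and blow-ups of positive-dimensional centers can create new zero-dimensional strata, so ``no singular stratum of dimension below the center'' is not automatic.

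There is also an internal inconsistency in the ordering you prescribe. ``Increasing dimension, then decreasing multiplicity'' conflicts with the near-pencil rule, which forces you to blow up the higher-dimensional ancestor $\widehat C$ \emph{before} touching anything at the dimension of $C$; and it also disagrees with the order that is known to work for octic arrangements (fivefold points, then triple lines, then $p_4^0$ points, then double lines --- not monotone in either dimension or multiplicity). Since you yourself note that the order of blowing up double lines changes the outcome, the choice of order is not a cosmetic matter, and a proof must pin it down and show the chosen order keeps all centers smooth and mutually disjoint at each stage. In short: the crepancy computation is right and the architecture is plausible, but the inductive heart of the argument --- that the class of configurations allowed by the hypothesis is closed under the prescribed blow-ups --- is not supplied, so this is a proof sketch rather than a proof.
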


Using the above theorem we prove the following.

\begin{proposition}
    Let $\mc{X}$ be a family of double covers branched along a family of octic arrangements, such that the degenerate fiber is also an octic arrangement. Let $\sigma \colon \mc{Y} \rr \mc{X}$ be sequence of blow-ups in the extended resolution from the section \ref{extending resolution}. Then, the fourfold $\mc{Y}$ is smooth.
\end{proposition}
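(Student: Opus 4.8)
The plan is to verify the hypotheses of Theorem \ref{ingalis} for the family $\mc{X}$ viewed as a double cover of $\PP^4$ branched along eight hypersurfaces, and then to check that the blow-ups in the extended resolution from section \ref{extending resolution} are exactly the blow-ups produced by (the proof of) that theorem, so that $\mc{Y}$ is the crepant resolution it furnishes — in particular smooth. First I would recall, from the discussion in section \ref{extending resolution} (Observations \ref{centers are intersections} and \ref{intersections can be extended}), that a one-parameter family of double octics has the structure of a double cover of $\PP^4 = \PP^3 \times (\PP^1 \setminus \Sigma)$, suitably compactified, branched along $\mc{D} = \bigcup_{i=1}^8 \mc{D}_i$ where each $\mc{D}_i$ is the hypersurface cut out by $P_i(x,y,z,t,A,B)=0$. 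The key point is that, since the members of $\mc{D}$ all have the same incidences as the generic octic arrangement (by the defining property of the family), each irreducible component $C \in \mc{A}$ of an intersection $\bigcap_{j\in J}\mc{D}_j$ is a (smooth, by the octic arrangement conditions: no $q$-fold curves for $q\ge 4$, no $p$-fold points for $p\ge 6$, plus the flatness over the base) transversal intersection of branch components, with $m(C) = \#J \le 5$.

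Next I would run through the numerical condition $\lfloor m(C)/2 \rfloor = \dim \mc{X} - \dim C - 1 = 4 - \dim C - 1 = 3 - \dim C$ case by case, using that the generality conditions on octic arrangements bound $m(C)$: a component $C$ with $m(C)=2$ is (generically) a surface, so $\dim C = 2$ and $\lfloor 2/2\rfloor = 1 = 3-2$; a triple curve has $m(C)=3$, $\dim C = 1$, and $\lfloor 3/2\rfloor = 1 = 3-1$; a quadruple point has $m(C)=4$, $\dim C = 0$, and $\lfloor 4/2\rfloor = 2 = 3-0$... wait — this fails, so here is where the near-pencil escape clause enters. For the configurations where the numerical equality fails (fourfold and fivefold points, and triple curves that are not in the expected dimension), I would check that such $C$ is near-pencil: a fourfold point lying on a triple curve is contained in that triple curve $C'$ with $\dim C' = \dim C + 1$ and $m(C') = 3 = m(C)-1$, which is precisely the near-pencil condition; a fivefold point is handled the same way, being a near-pencil relative to an appropriate triple curve or being excluded by the generality hypotheses, and similarly for the branch components $\mc{D}_i$ themselves intersecting in the exceptional configurations. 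The classification of singularity types in Appendix \ref{singularity classification} and the four local models listed in the previous proposition give the complete list of cases to check, so this is a finite verification.

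Having verified the hypotheses, Theorem \ref{ingalis} gives that $\mc{X}$ admits a crepant resolution $\mc{Y}' \rr \mc{X}$; it remains to identify $\mc{Y}'$ with the $\mc{Y}$ obtained by the extended resolution. For this I would appeal to the fact — already used in section \ref{extending resolution} and in Lemma \ref{commutativity of resolution} — that the extended resolution is built by blowing up, in the prescribed order, precisely the centers $\mc{Z} \in \mc{A}$ (fivefold points, then triple curves, then quadruple points, then double curves) that are the intersections dictated by the incidences of the octic arrangement, and that these are exactly the centers appearing in the inductive resolution procedure underlying Theorem \ref{ingalis} (the statement in \cite{li} being proved by successively blowing up the deepest such loci). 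Since a crepant resolution of a variety with the relevant (canonical, Gorenstein) singularities obtained by blowing up smooth centers is unique in codimension given by the procedure, and both resolutions are compositions of the same blow-ups, $\mc{Y} = \mc{Y}'$ and hence $\mc{Y}$ is smooth.

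The main obstacle I anticipate is the bookkeeping in the second step: namely, matching the order and the centers of the blow-ups in the author's explicit four-step algorithm (Steps 1–4 of section \ref{the algorithm}, extended to the family) against the blow-ups implicit in \cite{li}, and in particular handling the near-pencil loci correctly — one must be sure that every intersection $C$ with $m(C)\ge 3$ that violates the numerical condition genuinely is near-pencil, and that no new bad locus is created over the degenerate fiber (which is where Corollary \ref{jumping multiplicity} and the combinatorial analysis of section \ref{diagrams} are needed to control the multiplicities that jump). If some degenerate configuration produced a $C$ that is neither near-pencil nor in the correct dimension, the argument would break; the content of the paper's case analysis is precisely that this does not happen for any of the eleven degeneration types.
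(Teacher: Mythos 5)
Your overall strategy---verify the hypotheses of Theorem \ref{ingalis} (numerical condition or near-pencil) for the fourfold and then identify the resulting crepant resolution with the extended resolution---is the same as the paper's. But there is a genuine gap at the very first step: you assert that every component $C\in\mc{A}$ is automatically a \emph{smooth transversal} intersection of branch components ``by the octic arrangement conditions \dots plus the flatness over the base.'' This is false, and the paper's proof is mostly devoted to the cases where it fails. The hypersurfaces $\mc{D}_i$ are cut out by polynomials of bidegree $(1,1)$ in $\PP^3\times\PP^1$, not by hyperplanes, and in four of the eleven local degeneration types (Nos.\ 3, 8, 10, 11, with sample equations such as $xy(x+y+zw)z=u^2$) an intersection of branch components is singular: e.g.\ $x=y=x+y+zw=0$ forces $zw=0$, so the locus is a reducible curve, two lines meeting at a point. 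Theorem \ref{ingalis} therefore does not apply to $\mc{X}$ directly. The paper repairs this by performing the first blow-up of the extended resolution (along the line $x=y=z=0$, coming from the quadruple or quintuple point of generic fibers) explicitly in coordinates and checking that the strict transforms then meet in smooth loci; only after that step are the hypotheses of Theorem \ref{ingalis} satisfied. Your proposal has no mechanism for detecting or handling these four cases, so as written the verification of the theorem's hypotheses would fail.

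Two smaller points. First, your dimension bookkeeping mixes up subvarieties of a fiber with subvarieties of the fourfold: a triple curve of a generic fiber sweeps out a \emph{surface} in $\mc{X}$ (so $\lfloor 3/2\rfloor=1=4-2-1$ holds), whereas the loci that genuinely violate the numerical condition are the \emph{new} singularities confined to the degenerate fiber (new triple curves of dimension $1$, new fourfold points of dimension $0$, new quintuple points of dimension $0$); the near-pencil check is needed exactly for these, and for the new quintuple points the containing locus must be a quadruple curve ($m(C')=4=m(C)-1$), not a triple curve as you suggest. Second, your identification of $\mc{Y}$ with the resolution furnished by \cite{li} via ``uniqueness of crepant resolutions'' is not a valid argument in dimension $\geq 3$ (crepant resolutions differ by flops in general); the paper instead argues that the two procedures perform the same sequence of blow-ups, which is closer to what is actually needed.
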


\begin{proof}
We first need to show that blow-ups in the extended resolution have smooth centers. In an analytic neighbourhood of a generic fiber, centers of blow-ups are of the form $V \times \Delta$, where $V$ is a singularity of a generic fiber. Since $V$ satisfies conditions from theorem \ref{ingalis}, $V \times \Delta$ does as well. In neighbourhoods of degenerate fibers we use classification of combinatorial types of degenerations from the appendix \ref{singularity classification}. In all but four cases considered in section \ref{diagrams} the local branch divisor of the fourfold consists of a union of hyperplanes in $\CC^4$, therefore its intersections are smooth and transversal. In the other four cases some hypersurfaces in the branch divisor have degree two, and some of intersection between them might be singular or non-transversal. These degenerations can be given by the following sample equations taken from the previous section:

\begin{itemize}
    \item $x y(x+y+zw) z=u^{2}$  - the degeneration 4.8, 
    \item $x y(x+y)z(x+yw+z)=u^{2}$ - the degeneration 4.3,
    \item $x y z(x+y+zw)(x+yw+z)=u^{2}$ - the degeneration 4.10,
    \item $x y z(x+y+zw)(x+2 y+z)=u^{2}$ - the degeneration 4.11.
\end{itemize}

In each of these cases the first blow-up in the resolution is a blow-up of a curve $x=y=z=0$, which comes from blowing-up quadruple or quintuple points in generic fibers. We claim that this blow-up makes future centers of blow-ups smooth.

We start with the local degeneration $x y(x+y+z w) z=u^{2}$. Here, the only singular intersection is $x=y=x+y+zw=0$. The first blow-up is the blow-up of a quadruple point in generic fibers, this means that the whole family is blown up in the quadruple line $x=y=z=0$. The equations of this blow-up are: $x y(x+y+z w) z=x p_{1}-y p_{0}=x p_{2}-z p_{0}=y p_{2}-z p_{1}=0$, where $\left(p_{0}: p_{1}: p_{2}\right) \in \mathbb{P}^{2}$. One can easily check that strict transforms of surfaces $x=0, y=0, x+y+z w=0$ intersect in a smooth curve $x=y=w=p_{0}=p_{1}=0$. The other intersections remain smooth, as the blow-up had a smooth centre.

In case of the other three local degenerations, the same argument shows that the first blow-up make future centers of blow-ups smooth.

To finish the proof we show that singularities of the fourfold which are contained in degenerate fibers are near-pencil. Such singularities come from new triple lines, new fourfold points and new quintuple points. In case of a triple curve in a fourfold, it is always contained in a double surface. For a quadruple point in a fourfold, it is always contained in a triple curve. For quintuple points, we have to observe that in all local degenerations new quintuple points degenerate from quadruple points in generic fibers, and thus, lie on quadruple curves.

Concluding, after the first step of a resolution all families describing local degenerations satisfy the conditions of the lemma, furthermore a resolution of the family from the section \ref{extending resolution} agrees with the one that resolves the family as a fourfold.
\end{proof}

\section{Monodromy weight spectral sequence}

In this section we shall apply the results of previous sections to compute the monodoromy weight spectral sequence for a semistable degeneration $\mc{S}$ of one-parameter family of double octic Calabi-Yau threefolds. Let us call the components of the central fiber $S_0$ as $S_0 = Z_1 + ... + Z_n$, where $Z_i$ are smooth reduced prime divisors in $S_0$. We define for $p=1,2,...$;

\[
S^{[p]}=\bigsqcup_{1\leq i_1 \leq ... \leq i_p \leq n} Z_{i_1,...,i_p} = \bigsqcup_{1\leq i_1 \leq ... \leq i_p \leq n} Z_{i_1} \cap ... \cap Z_{i_p},
\]

the disjoint union of $p$-fold intersections of components of $S_0$. 

The monodromy weight spectral sequence for the semistable family $\pi \colon \mc{S} \rr \Delta$ has the $E_1$ page;

\[
E^{-k,h+k}_1=\oplus_{j\geq \max\{-k,0\}}H^{h-2j-k}(S^{[2j+k+1]})[-j -k],
\]
(for details see \cite[p.273]{steenbrink}).

This spectral sequence converges to the limiting mixed Hodge structure $H^q(S_\infty)$.

\begin{theorem}[{\cite[Cor. 11.23]{steenbrink}}]
    The monodromy weight spectral sequence degenerates at $E_2$ and
    \[
    E_1^{-p,q+p} \implies H^q(S_\infty,\QQ).
    \]
\end{theorem}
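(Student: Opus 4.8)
The result is due to Steenbrink, and the plan is to recall how it follows from placing the monodromy weight spectral sequence inside the formalism of cohomological mixed Hodge complexes. Write $N = \log T$ for the logarithm of the monodromy of $\pi \colon \mc{S} \rr \Delta$, which is unipotent because $\pi$ is semistable. First I would realise the nearby cohomology $H^\bullet(S_\infty,\QQ)$ as the hypercohomology of the Steenbrink complex $A^\bullet$ of sheaves on $S_0$, equip $A^\bullet$ with the weight filtration $W$ attached to $N$ and with the Hodge filtration $F$, and then check that $(A^\bullet, W, F)$ underlies a cohomological mixed Hodge complex: that $\operatorname{Gr}^W_\ell A^\bullet$ is a pure Hodge complex of weight $\ell$, that its rational and complex incarnations are linked by the required filtered quasi-isomorphism, and that the $F$-spectral sequence degenerates at $E_1$. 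This verification is the substance of the proof and is exactly what is carried out in \cite[\S11]{steenbrink}; I would quote it rather than redo it. The spectral sequence of the filtered complex $(A^\bullet, W)$, namely $E^{-k,h+k}_1 = H^h(\operatorname{Gr}^W_k A^\bullet) \Rightarrow H^h(A^\bullet) = H^h(S_\infty)$, is then the monodromy weight spectral sequence, and its abutment carries the limiting mixed Hodge structure.

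The second step is the residue computation of the $E_1$ page. Since $\pi$ is semistable, $S_0$ is a reduced normal crossing divisor whose closed strata are the smooth projective varieties $S^{[m]}$, and $\operatorname{Gr}^W_k A^\bullet$ is computed by iterated residues to be a direct sum, over $j \geq \max\{-k,0\}$, of constant sheaves supported on $S^{[2j+k+1]}$, each placed in the appropriate cohomological degree and Tate twisted so as to be pure of weight $h+k$ in total degree $h$. Passing to hypercohomology gives
\[
E^{-k, h+k}_1 = \bigoplus_{j \geq \max\{-k,0\}} H^{h-2j-k}\big(S^{[2j+k+1]}\big)[-j-k]
\]
as in the statement. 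In particular every $E_1$ term is a sum of Tate twists of cohomology groups of smooth projective varieties, so it carries a pure Hodge structure, and in this indexing $E^{-k,h+k}_1$ is pure of weight $h+k$; moreover $d_1$ is, up to signs, an alternating sum of Gysin and restriction maps among the strata $S^{[m]}$, and these are morphisms of Hodge structures compatible with the Tate twists, so $d_1$ preserves weight $h+k$. Hence $E^{-k,h+k}_2 = \ker d_1 / \operatorname{im} d_1$ is again pure of weight $h+k$.

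Degeneration at $E_2$ is then formal. For $r \geq 2$ the differential $d_r \colon E_r^{-k, h+k} \rr E_r^{-k+r,\, h+k-r+1}$ is a morphism from a Hodge structure pure of weight $h+k$ to one pure of weight $h+k-r+1 < h+k$, and any morphism between pure Hodge structures of distinct weights vanishes; hence $d_r = 0$ for all $r \geq 2$ and $E_2 = E_\infty$. Combined with the convergence of the spectral sequence to the limiting mixed Hodge structure noted above, this gives $E_1^{-p,q+p} \Rightarrow H^q(S_\infty,\QQ)$, the abutment filtration being, up to the usual shift, the weight filtration of the limit mixed Hodge structure. The one genuine obstacle in this plan is the first step --- establishing that the Steenbrink complex is a cohomological mixed Hodge complex and computing its $E_1$ page by residues --- which is lengthy; granting that, both the $E_2$-degeneration and the identification of the abutment are short Hodge-theoretic arguments.
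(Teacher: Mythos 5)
The paper offers no proof of this statement; it is quoted directly from Steenbrink (Cor.\ 11.23), and your outline --- realising $H^\bullet(S_\infty,\QQ)$ via the Steenbrink double complex, verifying the cohomological mixed Hodge complex axioms, computing $E_1$ by iterated residues on the strata $S^{[m]}$, and killing $d_r$ for $r\geq 2$ because a morphism of pure Hodge structures of distinct weights vanishes --- is precisely the argument of the cited source. Your proposal is correct and matches the route the paper relies on; the only step you elide (that the higher differentials are genuinely morphisms of Hodge structures, via Deligne's two-filtrations lemma) is part of the CMHC formalism you already invoke.
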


Moreover, on the level of vector spaces the limiting Hodge structure agrees with Hodge structure of a generic fiber.

\begin{proposition}[{\cite[Cor. 11.25]{steenbrink}}]
    $\operatorname{dim}F^p H^k(X_\infty) = \operatorname{dim}F^p H^k(X_t),t 
    \in \Delta^*$
\end{proposition}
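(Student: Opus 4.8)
The plan is to deduce this from the variation-of-Hodge-structure picture underlying the limiting mixed Hodge structure, i.e.\ from Schmid's nilpotent orbit theorem, which is already the analytic input behind the construction of \cite{steenbrink} invoked above. First I would fix notation: since $\mc{S} \rr \Delta$ is semistable, its central fibre is reduced normal crossing, so the local monodromy $T$ on the local system $\mathbb{H} := R^k\pi_*\QQ$ over $\Delta^*$ is unipotent, and $N := \log T$ is a single nilpotent operator. Fixing a base point $t_0 \in \Delta^*$ and writing $H := H^k(X_{t_0})$, $f^p := \dim F^p H^k(X_{t_0})$, parallel transport of the Hodge filtration along $\Delta^*$ defines a (multivalued) holomorphic period map into $\mathrm{Grass}(f^p, H)$; since this Grassmannian has the same dimension at every point, $\dim F^p H^k(X_t) = f^p$ is locally constant, hence constant, on $\Delta^*$. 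This disposes of the right-hand side of the asserted equality.

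Next I would identify the left-hand side. Untwisting the multivalued Hodge filtration by the monodromy, $t \mapsto \exp\!\bigl(-\tfrac{\log t}{2\pi i} N\bigr)\,F^p H^k(X_t)$, yields a single-valued holomorphic map $\widetilde F^p \colon \Delta^* \rr \mathrm{Grass}(f^p, H)$. The nilpotent orbit theorem asserts that $\widetilde F^p$ extends holomorphically across $0$, and by definition $\widetilde F^p(0) = F^p H^k(X_\infty)$ inside $H^k(X_\infty) \cong H$. Being a point of $\mathrm{Grass}(f^p, H)$, it is an $f^p$-dimensional subspace, which is precisely the claim. Equivalently, phrased through Deligne's canonical extension $\overline{\mathcal H}$ of $\mathcal H := \mathbb{H} \otimes \mc{O}_{\Delta^*}$ to $\Delta$: Schmid's theorem says the Hodge bundles $\mathcal F^p \subset \mathcal H$ extend to holomorphic \emph{sub}bundles $\overline{\mathcal F}^p \subset \overline{\mathcal H}$ whose fibre at the origin is $F^p H^k(X_\infty)$, and a subbundle has locally constant fibre rank, so $\dim F^p H^k(X_\infty) = \operatorname{rk}\mathcal F^p = \dim F^p H^k(X_t)$ for $t \in \Delta^*$.

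The one deep ingredient, and the main obstacle if one wants a self-contained argument, is exactly the claim that the rank does not jump at $0$, i.e.\ Schmid's theorem that the Hodge filtration extends to a filtration by holomorphic subbundles of the canonical extension; this rests on the nilpotent- and $SL_2$-orbit estimates. Once it is granted, the rest is bookkeeping. An alternative route, staying closer to the explicit geometry of \cite{steenbrink} and avoiding Schmid's theorem, is to use the $E_1$-degeneration of the Hodge--de Rham spectral sequence for the relative logarithmic de Rham complex $\Omega^\bullet_{\mc{S}/\Delta}(\log S_0)$: this gives $\dim F^p H^k(X_\infty) = \sum_{q \ge p} \dim \mathbb{H}^{k-q}\bigl(S_0, \operatorname{gr}^q_F \Omega^\bullet_{\mc{S}/\Delta}(\log S_0)\bigr)$, which by flatness and cohomology-and-base-change equals $\sum_{q \ge p} \dim H^{k-q}(X_t, \Omega^q_{X_t}) = \dim F^p H^k(X_t)$; there the obstacle is relocated to proving local freeness of the relevant higher direct image sheaves, which is again where the $E_1$-degeneration of \cite{steenbrink} does the essential work.
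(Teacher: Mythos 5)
Your argument is correct; note that the paper itself offers no proof of this proposition, citing it directly as \cite[Cor.~11.25]{steenbrink}, and your second route (the $E_1$-degeneration of the Hodge--de Rham spectral sequence for $\Omega^\bullet_{\mc{S}/\Delta}(\log S_0)$ together with local freeness of the Hodge bundles) is essentially Steenbrink's own proof in that reference. The first route via Schmid's nilpotent orbit theorem and the canonical extension is a valid alternative, with the deep input correctly identified as the non-jumping of the rank of the extended Hodge subbundles at the origin.
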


Using results from the previous sections we show how one computes this sequence for a semistable family of double octics.

\subsection{Degeneration at $w=1$ of the arrangement No. 2.}
\label{ex1}
In this degeneration the only new incidence is given by the new fourfold point. As we have seen in section \ref{diagrams} after a resolution of a generic fiber, there are two nodes in the degenerate fiber. The surface which is a pullback of any of the last two blown-up curves contains both nodes. We blow up this surface to resolve these nodes. After this blow-up we have the following varieties appearing in the spectral sequence.

\begin{itemize}
  \item $Y$ - smooth Calabi-Yau threefold.

  \item $Q_{1}$ - smooth $\mathbb{P}^{1}$ bundle.

  \item $Y \cap Q_{1}$ - blow-up of $\mathbb{P}^{1} \times \mathbb{P}^{1}$ at two points.

\end{itemize}

The spectral sequence has the following entries.

\[\begin{tikzcd}[row sep=0 em]
	H^4(Y \cap Q_1) & H^{6}\left(Y\right) \oplus H^6(Q_1) & 0 \\
	H^3(Y \cap Q_1) & H^{5}\left(Y\right) \oplus H^5(Q_1)  & 0\\
        H^2(Y \cap Q_1) & H^{4}\left(Y\right) \oplus H^4(Q_1)  & H^4(Y \cap Q_1)\\
        H^1(Y \cap Q_1) & H^{3}\left(Y\right) \oplus H^3(Q_1)  & H^3(Y \cap Q_1)\\
        H^0(Y \cap Q_1) & H^{2}\left(Y\right) \oplus H^2(Q_1)  & H^2(Y \cap Q_1)\\
        0 & H^{1}\left(Y\right) \oplus H^1(Q_1)  & H^1(Y \cap Q_1)\\
        0 & H^{0}\left(Y\right) \oplus H^0(Q_1)  & H^0(Y \cap Q_1)\\
	\arrow[from=1-1, to=1-2]
	\arrow[from=1-2, to=1-3]
	\arrow[from=2-1, to=2-2]
	\arrow[from=2-2, to=2-3]
	\arrow[from=3-1, to=3-2]
	\arrow[from=3-2, to=3-3]
	\arrow[from=4-1, to=4-2]
	\arrow[from=4-2, to=4-3]
	\arrow[from=5-1, to=5-2]
	\arrow[from=5-2, to=5-3]
	\arrow[from=6-1, to=6-2]
	\arrow[from=6-2, to=6-3]
	\arrow[from=7-1, to=7-2]
	\arrow[from=7-2, to=7-3]
\end{tikzcd}\]

On the level of complex vector spaces this is equal to.

\[\begin{tikzcd}[row sep=0 em]
	\CC & \CC \oplus \CC & 0 \\
	0 & 0  & 0\\
        \CC^4 & \CC^{70} \oplus \CC^3  & \CC \\
        0 & \CC^2  & 0 \\
        \CC & \CC^{70} \oplus \CC^3  & \CC^4 \\
	0 & 0  & 0\\
        0 & \CC \oplus \CC & \CC \\
	\arrow[from=1-1, to=1-2]
	\arrow[from=1-2, to=1-3]
	\arrow[from=2-1, to=2-2]
	\arrow[from=2-2, to=2-3]
	\arrow[from=3-1, to=3-2]
	\arrow[from=3-2, to=3-3]
	\arrow[from=4-1, to=4-2]
	\arrow[from=4-2, to=4-3]
	\arrow[from=5-1, to=5-2]
	\arrow[from=5-2, to=5-3]
	\arrow[from=6-1, to=6-2]
	\arrow[from=6-2, to=6-3]
	\arrow[from=7-1, to=7-2]
	\arrow[from=7-2, to=7-3]
\end{tikzcd}\]

The generators of $H^2(Y \cap Q_1)$ consist of two rulings and two exceptional curves. The images of the classes of these two exceptional curves are equal in 
$H^4(Q_1)$ and $H^4(Y)$ and are non-zero. The map in the third row $H^{4}\left(Y\right) \oplus H^4(Q_1) \rr H^4(Y \cap Q_1)$ is surjective. Therefore, vector spaces on the second page are the following.

\[\begin{tikzcd}[row sep=0 em]
	0 & \CC & 0 \\
	0 & 0  & 0\\
        \CC & \CC^{69} & 0 \\
        0 & \CC^2  & 0 \\
        0 & \CC^{69} & \CC \\
	0 & 0  & 0\\
        0 & \CC & 0 \\
	\arrow[from=1-1, to=1-2]
	\arrow[from=1-2, to=1-3]
	\arrow[from=2-1, to=2-2]
	\arrow[from=2-2, to=2-3]
	\arrow[from=3-1, to=3-2]
	\arrow[from=3-2, to=3-3]
	\arrow[from=4-1, to=4-2]
	\arrow[from=4-2, to=4-3]
	\arrow[from=5-1, to=5-2]
	\arrow[from=5-2, to=5-3]
	\arrow[from=6-1, to=6-2]
	\arrow[from=6-2, to=6-3]
	\arrow[from=7-1, to=7-2]
	\arrow[from=7-2, to=7-3]
\end{tikzcd}
\]

As a consequnece we recover the Betti numbers of a generic fiber $S_{\eta}$ of the family: $b^{0}=1$, $b^{1}= 0$, $b^{2}=69, b^{3}=4, b^{4}=69, b^{5}=0, b^{6}=1$ as well as a mixed Hodge structure on $H^3(S_\infty)$ which is non pure.

\subsection{Degeneration at $w=1$ of the arrangement No. 34.}
\label{ex2}
In this degeneration the only new incidence is a new $p_{5}^{1}$ point coming from a collision of two $p_{4}^{1}$ points. After a resolution of a generic fiber the degenerate fiber has a double line with four pinchpoints. After passing to semistable degeneration we have the following varieties appearing in the spectral sequence.

\begin{itemize}
  \item $Y$ - smooth Calabi-Yau threefold.

  \item $Q_{1}$ - double cover of $\PP^2 \times \PP^1$ branched along $Y \cap Q_{1}$.

  \item $Y \cap Q_{1}$ - conic bundle with a generic fiber being smooth while fibers over each of the four pinch points consist of two intersecting lines.

\end{itemize}

On the level of complex vector spaces we get the $E_1$ of the following spectral sequence.

\[\begin{tikzcd}[row sep=0 em]
	\CC & \CC \oplus \CC & 0 \\
	0 & 0  & 0\\
        \CC^6 & \CC^{54} \oplus \CC^2 & \CC \\
        0 & \CC^2 \oplus \CC^2 & 0 \\
        \CC & \CC^{54} \oplus \CC^2 & \CC^6 \\
	0 & 0  & 0\\
        0 & \CC \oplus \CC & \CC \\
	\arrow[from=1-1, to=1-2]
	\arrow[from=1-2, to=1-3]
	\arrow[from=2-1, to=2-2]
	\arrow[from=2-2, to=2-3]
	\arrow[from=3-1, to=3-2]
	\arrow[from=3-2, to=3-3]
	\arrow[from=4-1, to=4-2]
	\arrow[from=4-2, to=4-3]
	\arrow[from=5-1, to=5-2]
	\arrow[from=5-2, to=5-3]
	\arrow[from=6-1, to=6-2]
	\arrow[from=6-2, to=6-3]
	\arrow[from=7-1, to=7-2]
	\arrow[from=7-2, to=7-3]
\end{tikzcd}\]

The left map in the first row is a injective, by duality the right map in the last row is surjective. The map in the fifth row $\mathbb{C} \rightarrow \mathbb{C}^{54} \oplus \mathbb{C}^{2}$ is injective as it comes from a closed embedding of an algebraic cycle, the dual map $\mathbb{C}^{54} \oplus \mathbb{C}^{2} \rightarrow \mathbb{C}$ is thus surjective.

The second page of the spectral sequence is the following.

\[\begin{tikzcd}[row sep=0 em]
	0 & \CC & 0 \\
	0 & 0  & 0\\
        0 & \CC^{49} & 0 \\
        0 & \CC^4  & 0 \\
        0 & \CC^{49} & 0 \\
	0 & 0  & 0\\
        0 & \CC & 0 \\
	\arrow[from=1-1, to=1-2]
	\arrow[from=1-2, to=1-3]
	\arrow[from=2-1, to=2-2]
	\arrow[from=2-2, to=2-3]
	\arrow[from=3-1, to=3-2]
	\arrow[from=3-2, to=3-3]
	\arrow[from=4-1, to=4-2]
	\arrow[from=4-2, to=4-3]
	\arrow[from=5-1, to=5-2]
	\arrow[from=5-2, to=5-3]
	\arrow[from=6-1, to=6-2]
	\arrow[from=6-2, to=6-3]
	\arrow[from=7-1, to=7-2]
	\arrow[from=7-2, to=7-3]
\end{tikzcd}\]

Once again we recover betti numbers of generic fiber of the family, as well as a mixed Hodge structure on $H^3(S_\infty)$ which is this time pure.

\subsection{Degeneration at $w=1$ of the arrangement No. 273.}
\label{ex3}
In this degeneration after resolving generic fibers we have the following singularities in the central fiber:
    
\begin{itemize}
  \item two double curves with no pinch points,

  \item five double curves with two pinch points,

  \item three points where three double curves meet.

\end{itemize}

Their configuration can be visualized in the following picture.
\begin{center}
\begin{tikzpicture}[scale=1.3]

\node at (-2.2,4) {$l_1$};
\draw[thick] (-2,4) -- (5,4);

\node at (-1,5.2) {$l_2$};
\draw[thick] (-1,5) -- (-1,-1);

\node at (-2.2,5.2) {$l_3$};
\draw[thick] (-2,5) -- (0,3);
\filldraw[black] (-1.5,4.5) circle (2pt);
\filldraw[black] (-0.5,3.5) circle (2pt);

\node at (-2.2,1.2) {$l_6$};
\draw[thick] (-2,1) -- (0,1);
\filldraw[black] (-1.5,1) circle (2pt);
\filldraw[black] (-0.5,1) circle (2pt);

\node at (-2.2,2.2) {$l_7$};
\draw[thick] (-2,2) -- (0,0);
\filldraw[black] (-0.5,0.5) circle (2pt);
\filldraw[black] (-1.5,1.5) circle (2pt);

\node at (3,5.2) {$l_4$};
\draw[thick] (3,5) -- (3,3);
\filldraw[black] (3,4.5) circle (2pt);
\filldraw[black] (3,3.5) circle (2pt);

\node at (2,5.2) {$l_5$};
\draw[thick] (2,5) -- (4,3);
\filldraw[black] (2.5,4.5) circle (2pt);
\filldraw[black] (3.5,3.5) circle (2pt);
\end{tikzpicture}
\end{center}

Straight lines represent double curves, and the dots represent pinch points on them. This time the construction of a semistable degeneration is more complicated.

We begin by performing two blow-ups: first along the curve  $l_1$, followed by the blow-up of curve $l_2$. Next, we blow up the remaining five double curves ($l_3$ through $l_7$), which become disjoint after blow-ups of $l_1$ and $l_2$. We analyze the local situation above each triple point.

Let us take the point where lines $l_1,l_2,l_3$ intersect. By blowing up these three lines we introduce three new components of the central fiber $R_1,R_2,R_3$. These three new components are non-reduced, therefore we take a double cover branched along the strict transform of $Y$ - the variety we started with, denote the double covers of $R_1,R_2,R_3$ by $Q_1,Q_2,Q_3$ respectively. Threefolds $R_2,R_3$ are $\PP^2$ bundles which intersect $Y$ along a smooth conic bundles, thus $Q_2,Q_3$ are smooth quadric bundles.

The geometry of $R_1$ and $Q_1$ is slightly more complicated. Generic fiber of $R_1$ is $\PP_2$ and a degenerate fiber has three intersecting components: two projective planes $A,B$ and one projective plane blown up at two points $C$. The projective planes $A,B$ intersect $C$ along its exceptional divisors. To understand geometry of the degenerate fiber of $Q_1$ we visualize the intersection $R_1 \cap Y$ with the following diagram.

\begin{center}
\begin{tikzpicture}[scale=1.3]
\node at (-2,0) {$A \cap Y$};
\draw[very thick](-2,0) circle (1);
\node at (0,0.2) {$C \cap Y$};
\draw[thick] (-1,0) -- (1,0);
\node at (2,0) {$B \cap Y$};
\draw[very thick](2,0) circle (1);

\end{tikzpicture}
\end{center}

Here, $A \cap Y$ and $B \cap Y$ are smooth conics, while $C \cap Y$ is a double line. The double cover of $A,B$ and $C$, branched along these intersections, consists of four surfaces. Two disjoint quadrics $\wb{A}$ and $\wb{B}$ coming from double covers of $A$ and $B$ branched along smooth conics, and two projective planes $\wb{C_1}$ and $\wb{C_2}$ coming from a double cover of $C$ branched along a double line, each of these two planes are blown-up at two points. Moreover the planes $\wb{C_1}$ and $\wb{C_2}$ intersect quadrics $\wb{A}$ and $\wb{B}$, generating two distinct rulings on both quadrics. Finally the other threefolds $Q_2,Q_3$ intersect $Q_1$ along the quadrics $\wb{A}$ and $\wb{B}$ respectively. The global geometry of fibrations over all 7 double lines can be computed by gluing the local information over each triple point. As all of them have a structure of $\PP^1$ bundle their Betti numbers can be easily computed from the Leray spectral sequence. In the same way we can compute the Betti number of all intersections between components, having in mind that the pinch points on lines $l_3,l_4,l_5,l_6,l_7$ make it so that the conic bundles $Q_i \cap Y$ over these lines have degenerate fibers over pinch points consisting of two intersecting lines. 

Finally components and their Betti numbers are as follows.
\begin{itemize}
    \item $Y$ - smooth Calabi-Yau: $(1, 0, 54, 2, 54, 0, 1)$.
    \item $Q_1$ - quadric bundle with two degenerate fibers, each of them having four components: $(1, 0, 9, 0, 9, 0, 1)$.
    \item $Q_2$ - quadric bundle with a single degenerate fiber which has four components: $(1, 0, 6, 0, 6, 0, 1)$.
    \item $Q_3,...,Q_7$ - quadric bundles with two singular fibers: $(1, 0, 2, 0, 2, 0, 1)$.
\end{itemize}

All non-empty intersections between these components are the following.

\begin{itemize}
  \item $Q_{1} \cap Y$ - conic bundle with two degenerate fibers, each of them having three components:$(1,0,6,0,1)$.
  \item $Q_{2} \cap Y$ - conic bundle with a single degenerate fiber which has three components:$(1,0,4,0,1)$.
  \item $Q_{3} \cap Y,...,Q_{7} \cap Y$ - conic bundles with two degenerate fiber, each of them having two components.  :$(1,0,4,0,1)$
  \item $Q_{1} \cap Q_{2} , \, Q_{1} \cap Q_{3}, \, Q_{1} \cap Q_{4}, \, Q_{1} \cap Q_{5}, \, Q_{2} \cap Q_{6}, \, Q_{2} \cap Q_{7}$ - smooth quadrics: $(1,0,2,0,1)$.
  \item $Q_{i} \cap Q_{j} \cap Y$, where $i,j$ are such that $Q_i\ \cap Q_j \neq \emptyset$ - smooth conics: $(1,0,1)$.

\end{itemize}

We obtain the following spectral sequence, each term is presented as a sum of cohomology groups of the appropriate intersections.

\[\begin{tikzcd}[row sep=small]
    H^2(S^{[3]}) & H^4(S^{[2]}) & H^6(S^{[1]}) & 0 & 0\\
    H^1(S^{[3]}) & H^3(S^{[2]}) & H^5(S^{[1]}) & 0 & 0\\
    H^0(S^{[3]}) & H^2(S^{[2]}) & H^4(S^{[1]}) \oplus H^2(S^{[3]}) & H^4(S^{[2]}) & 0\\
    0 & H^1(S^{[2]}) & H^3(S^{[1]}) \oplus H^1(S^{[3]}) & H^3(S^{[2]}) & 0\\
    0 & H^0(S^{[2]}) & H^2(S^{[1]}) \oplus H^0(S^{[3]}) & H^2(S^{[2]}) & H^2(S^{[3]}) \\
    0 & 0 & H^1(S^{[1]}) & H^1(S^{[2]}) & H^1(S^{[3]})\\
    0 & 0 & H^0(S^{[1]}) & H^0(S^{[2]}) & H^0(S^{[3]})\\
        \arrow[from=1-1, to=1-2]
	\arrow[from=1-2, to=1-3]
	\arrow[from=1-3, to=1-4]
	\arrow[from=1-4, to=1-5]
	\arrow[from=2-1, to=2-2]
	\arrow[from=2-2, to=2-3]
	\arrow[from=2-3, to=2-4]
	\arrow[from=2-4, to=2-5]
	\arrow[from=3-1, to=3-2]
	\arrow[from=3-2, to=3-3]
	\arrow[from=3-3, to=3-4]
	\arrow[from=3-4, to=3-5]
	\arrow[from=4-1, to=4-2]
	\arrow[from=4-2, to=4-3]
	\arrow[from=4-3, to=4-4]
	\arrow[from=4-4, to=4-5]
	\arrow[from=5-1, to=5-2]
	\arrow[from=5-2, to=5-3]
	\arrow[from=5-3, to=5-4]
	\arrow[from=5-4, to=5-5]
	\arrow[from=6-1, to=6-2]
	\arrow[from=6-2, to=6-3]
	\arrow[from=6-3, to=6-4]
	\arrow[from=6-4, to=6-5]
	\arrow[from=7-1, to=7-2]
	\arrow[from=7-2, to=7-3]
	\arrow[from=7-3, to=7-4]
	\arrow[from=7-4, to=7-5]
\end{tikzcd}\]

These vector spaces have the following dimensions.

\[\begin{tikzcd}[row sep=small]
    \CC^6 & \CC^{13} & \CC^8 & 0 & 0\\
    0 & 0 & 0 & 0 & 0\\
    \CC^6 & \CC^{42} & \CC^{85} & \CC^{13} & 0\\
    0 & 0 & \CC^2 & 0 & 0\\
    0 & \CC^{13} & \CC^{85} & \CC^{42} & \CC^6 \\
    0 & 0 & 0 & 0 & 0\\
    0 & 0 & \CC^8 & \CC^{13} & \CC^6 \\
        \arrow[from=1-1, to=1-2]
	\arrow[from=1-2, to=1-3]
	\arrow[from=1-3, to=1-4]
	\arrow[from=1-4, to=1-5]
	\arrow[from=2-1, to=2-2]
	\arrow[from=2-2, to=2-3]
	\arrow[from=2-3, to=2-4]
	\arrow[from=2-4, to=2-5]
	\arrow[from=3-1, to=3-2]
	\arrow[from=3-2, to=3-3]
	\arrow[from=3-3, to=3-4]
	\arrow[from=3-4, to=3-5]
	\arrow[from=4-1, to=4-2]
	\arrow[from=4-2, to=4-3]
	\arrow[from=4-3, to=4-4]
	\arrow[from=4-4, to=4-5]
	\arrow[from=5-1, to=5-2]
	\arrow[from=5-2, to=5-3]
	\arrow[from=5-3, to=5-4]
	\arrow[from=5-4, to=5-5]
	\arrow[from=6-1, to=6-2]
	\arrow[from=6-2, to=6-3]
	\arrow[from=6-3, to=6-4]
	\arrow[from=6-4, to=6-5]
	\arrow[from=7-1, to=7-2]
	\arrow[from=7-2, to=7-3]
	\arrow[from=7-3, to=7-4]
	\arrow[from=7-4, to=7-5]
\end{tikzcd}\]

The maps from the leftmost column come from inclusion of $\PP^1$ into disjoint components and are therefore injective. By duality, the maps to the rightmost column are surjective. The dimension of the image of the map $H^4(S^{[2]}) \rr H^6(S^{[1]})$ is $7$, which can be explicitly computed from incidence relations.

The only maps left are $H^2(S^{[2]}) \rr H^4(S^{[1]}) \oplus H^2(S^{[3]})$ in the second row, and the dual map $H^2(S^{[1]}) \oplus H^0(S^{[3]}) \rr H^2(S^{[2]})$ in the fourth row.

We compute the dimension of the kernel of the map $\phi \colon H^2(S^{[2]}) \rr H^4(S^{[1]}) \oplus H^2(S^{[3]})$, which in this case is $\phi \colon \CC^{42} \rr \CC^{85}$. As there are no quadruple intersections, whose cohomologies would map to $H^2(S^{[3]})$, the image of $\phi$ is contained in $H^4(S^{[1]})$. Poincaré duality yields that $\phi$ corresponds to the map $\phi^* \colon H_2(S^{[1]}) \rr H_2(S^{[2]})$. Let us first precisely describe generators of $H_2(S^{[2]})$ and $H_2(S^{[1]})$. As signs of maps in the monodromy weight spectral sequence depend on the ordering components let us enumerate all the components by giving the smooth Calabi-Yau threefold $Y$ number 0, and quadrics $Q_i$ the numbers that appear as their indices.

We list generators of homology group $H_2(S^{[1]})$ for components other then the Calabi-Yau threefold $Y$ and all generators of the homology group $H_2(S^{[2]})$.

\begin{itemize}
    \item For $Q_1$: $e^1_1$ - smooth section, $e^1_2,e^1_3,e^1_4,e^1_5,e^1_6,e^1_7$ pairs of rulings on quadrics in special fibers lying on the intersection with components $Q_2,Q_3,Q_4$ respectively, $e^1_8,e^1_9$ lines connecting quadrics in special fibres.

    \item For $Q_2$: $e^2_1$ - smooth section, $e^2_2,e^2_3,e^2_4,e^2_5$ pairs of rulings on quadrics in the special fibers lying on the intersection with components $Q_6,Q_7$ respectively, $e^2_6$ line connecting quadrics in the special fibre (which was denoted on the previous diagram as $C \cap Y$).

    \item for $Q_3,Q_4,Q_5,Q_6,Q_7$: $e^i_1$ - ruling on a generic fiber, $e^i_2$ - smooth section,
    \item for $Y \cap Q_1$: $e^{01}_1,e^{01}_2,e^{01}_3,e^{01}_4$ - smooth conics in the intersection with components $Q_2,Q_3,Q_4,Q_5$ respectively, $e^{01}_5$ - smooth section, $e^{01}_6$ line connecting conics $e^{01}_1$ and $e^{01}_2$,
    \item for $Y \cap Q_2$: $e^{02}_1,e^{02}_2$ - smooth conics on the intersection with components $Q_6,Q_7$, $e^{02}_3$ - smooth section, $e^{02}_4$ line connecting conics $e^{02}_1$ and $e^{02}_2$,
    \item for $Y \cap Q_i, i = 3,4,5,6,7$: $e^{0i}_1,e^{0i}_2$ - lines on special fibers over pinch points, $e^{0i}_3$ - generic fiber, $e^{0i}_4$ - smooth section,
    \item for other $Q_i \cap Q_j$, $e^{ij}_1,e^{ij}_2$ - two rulings on a quadric.
\end{itemize}

The listed components generate second homology group of each summand, which follows directly from computing the intersection matrices of these curves and appropriate divisors. Curves generating the second homology of Calabi-Yau threefolds come from sections of exceptional divisors from each blow-up, there is also a single curve coming from a lift of generic line on $\PP^3$.

To obtain the matrix describing $\phi$ we compute the image of the basis of $H^2(S^{[2]})$ in the coordinates given by the basis of $H^2(S^{[1]})$.

Sections of conic bundles are mapped one-to-one to the sections of appropriate quadric bundles. Moreover, nothing else is mapped onto sections of quadric bundles, therefore kernel of $\phi$ is contained in the space generated by other elements.

Each quadric in $S^{[1]}$ is the intersection of two fibrations. On one of these fibrations it is a generic fiber and on the other it is a component of a special fibre. For rulings on quadrics $Q_1 \cap Q_3, Q_1 \cap Q_4, Q_2 \cap Q_6,Q_2 \cap Q_7$, we have curves in the basis of $H^2(S{[0]})$ on which they precisely map to. Rulings on the quadric $Q_1 \cap Q_2$ get mapped onto rulings on the component in a special fibre of the bundle $Q_1$ and on rulings on a generic fiber in the bundle $Q_2$. By deforming a generic fiber of the bundle $Q_2$ we get that these rulings are homologically equivalent to curves $e^2_2 + e^2_4 +e^2_6$ and $e^2_3 + e^2_5+e^2_6$ respectively. Rulings on the quadric $Q_1 \cap Q_5$ get mapped to rulings on a generic fiber of $Q_5$ and rulings on a component of a special fibre of $Q_1$. Let us call the image of these rulings in the bundle $Q_1$ as $r_1,r_2$, by deforming curves on a generic fiber we get the following relations $r_1+e^1_9+e^1_7 = e^1_2+e^1_4+e^1_8$, $r_2+e^1_9+e^1_6=e^1_3+e^1_5+e^1_8$.

Generic fibers of conic bundles $Y \cap Q_i, i = 3,4,5,6,7$ are mapped to curves on Calabi-Yau threefold $Y$ which lie on the blow-ups of double lines corresponding to these bundles and to rulings with multiplicity two on generic fibers of the appropriate quadric bundles.

Lines over pinch points in conic bundles $Y \cap Q_i, i = 3,4,5,6,7$ can be mapped onto curves in the Calabi-Yau threefold $Y$ lying on the blow-up of the fourfold points which correspond to these pinch points. Nothing else is mapped onto these curves in Calabi-Yau threefold and therefore they can not be in the kernel of $\phi$.

The curves in $H_2(S^{[1]})$ we need to consider are conics in the singular fibers in conic bundles $Y \cap Q_1$ and $Y \cap Q_2$. Since the image of $H_2(S^{[2]}) \rr H_2(S^{[1]})$ lies in the kernel of $\phi$, we have that the image of each of the curves $e^{01}_1,e^{01}_2,e^{01}_3,e^{01}_4,e^{02}_1,e^{02}_2$ is a linear combination of other curves we described earlier.

The only curves remaining are $e^{01}_6,e^{02}_4$. To each of them we can associate curves on the Calabi-Yau threefold $Y$ corresponding to the blow-ups of double lines corresponding to these bundles.

With these observations we can write the matrix of $\phi^*$ keeping only the relevant rows and columns which do not belong to the image of .

\setcounter{MaxMatrixCols}{30}

$\begin{pNiceMatrix}[first-row,first-col]
    & e^1_2 & e^1_3 & e^1_4 & e^1_5 & e^1_6 & e^1_7 & e^1_8 & e^1_9 & e^2_2 & e^2_3 & e^2_4 & e^2_5 & e^2_6 & e^3_1 & e^4_1 & 5^6_1 & e^6_1 & e^7_1 &    \\[3pt]
e^{12}_1 & 1&  &  &  &  &  &  &  &-1&  &-1&  &-1&  &  &  &  &  &\\[3pt]
e^{12}_2 &  & 1&  &  &  &  &  &  &  &-1&  &-1&-1&  &  &  &  &  &\\[3pt]
e^{13}_1 &  &  & 1&  &  &  &  &  &  &  &  &  &  &-1&  &  &  &  &\\[3pt]
e^{13}_2 &  &  &  & 1&  &  &  &  &  &  &  &  &  &-1&  &  &  &  &\\[3pt]
e^{14}_1 &  &  &  &  &  & 1&  &  &  &  &  &  &  &  &-1&  &  &  &\\[3pt]
e^{14}_2 &  &  &  &  &  &  & 1&  &  &  &  &  &  &  &-1&  &  &  &\\[3pt]
e^{15}_1 & 1&  & 1&  & 1&-1&  &-1&  &  &  &  &  &  &  &-1&  &  &\\[3pt]
e^{15}_2 &  & 1&  & 1& 1&  &-1&-1&  &  &  &  &  &  &  &-1&  &  &\\[3pt]
e^{26}_1 &  &  &  &  &  &  &  &  & 1&  &  &  &  &  &  &  &-1&  &\\[3pt]
e^{26}_2 &  &  &  &  &  &  &  &  &  & 1&  &  &  &  &  &  &-1&  &\\[3pt]
e^{27}_1 &  &  &  &  &  &  &  &  &  &  & 1&  &  &  &  &  &  &-1&\\[3pt]
e^{27}_2 &  &  &  &  &  &  &  &  &  &  &  & 1&  &  &  &  &  &-1&\\[3pt]
\end{pNiceMatrix}$

The kernel of this matrix is one dimensional and is spanned by the chain $C = e^{12}_1 - e^{12}_2 + e^{13}_1 - e^{13}_2 + e^{14}_1 - e^{14}_2 + e^{15}_1 - e^{15}_2 + e^{26}_1 - e^{26}_2 + e^{27}_1 - e^{27}_2$. The chain $C$ has two rulings from every quadric, for every quadric one of its rulings appears with a $+1$ and the other one with $-1$ coefficient. We note, that this chain cannot be deformed to a chain in the rigid Calabi-Yau threefold, as its intersection with each of these quadrics is a smooth conic, which is homologically equivalent to taking two rulings each with a $+1$ coefficient. Thus, the second page of the spectral sequence is the following.

\[\begin{tikzcd}[row sep=small]
    0 & 0 & \CC & 0 & 0\\
    0 & 0 & 0 & 0 & 0\\
    0 & \CC & \CC^{37} & 0 & 0\\
    0 & 0 & \CC^2 & 0 & 0\\
    0 & 0 & \CC^{37} & \CC & 0 \\
    0 & 0 & 0 & 0 & 0\\
    0 & 0 & \CC & 0 & 0 \\
        \arrow[from=1-1, to=1-2]
	\arrow[from=1-2, to=1-3]
	\arrow[from=1-3, to=1-4]
	\arrow[from=1-4, to=1-5]
	\arrow[from=2-1, to=2-2]
	\arrow[from=2-2, to=2-3]
	\arrow[from=2-3, to=2-4]
	\arrow[from=2-4, to=2-5]
	\arrow[from=3-1, to=3-2]
	\arrow[from=3-2, to=3-3]
	\arrow[from=3-3, to=3-4]
	\arrow[from=3-4, to=3-5]
	\arrow[from=4-1, to=4-2]
	\arrow[from=4-2, to=4-3]
	\arrow[from=4-3, to=4-4]
	\arrow[from=4-4, to=4-5]
	\arrow[from=5-1, to=5-2]
	\arrow[from=5-2, to=5-3]
	\arrow[from=5-3, to=5-4]
	\arrow[from=5-4, to=5-5]
	\arrow[from=6-1, to=6-2]
	\arrow[from=6-2, to=6-3]
	\arrow[from=6-3, to=6-4]
	\arrow[from=6-4, to=6-5]
	\arrow[from=7-1, to=7-2]
	\arrow[from=7-2, to=7-3]
	\arrow[from=7-3, to=7-4]
	\arrow[from=7-4, to=7-5]
\end{tikzcd}\]

We recover betti numbers of a generic fiber of the family and a mixed Hodge structure on $H^3(S_\infty)$ which is non pure.

\section{Conclusion}

The three examples of degenerations studied in the previous section provide us with two distinct behaviors of the third cohomology group of the degenerate fiber. In the first and last example, the mixed Hodge structure on the third cohomology group was non-pure of the form $\mathbb C\oplus \mathbb C^2 \oplus \mathbb C$  with the two dimensional part coming from cycles on a rigid Calabi-Yau threefold and the one-dimensional ones coming from cycles on double intersections. On the other hand in the example studied in sect. \ref{ex2} the mixed Hodge structure on the third cohomology group is pure.

Singularities of a degenerate fiber $Y_w$ in any one-parameter family of double octics admit a crepant resolution $Y$ (see \cite{CYK}), which is a rigid Calabi-Yau threefold. In this situation we can consider the family $\mathcal Y$ as a smoothing of the contraction $Y_w$ of the degenerate fiber, hence this process gives a geometric transition between the family $\mathcal Y$ and the rigid Calabi-Yau threefold $Y$. The monodromy weight spectral sequence yields relation between invariants of $Y$ and a generic fiber of $\mathcal Y$. Finally, we can compute the mixed Hodge structures on the middle cohomology of $Y$,

A one-parameter family of Calabi-Yau threefolds defines a Variation of Hodge Structures and Fuchs differential operator satisfied by the period integrals,  considered degenerations corresponds to conifold points. There exist however conifold points, where the degenerate configurations of eight planes do not satisfy the definition of octic arrangement. The failure of the octic arrangement condition in this situation is a consequence of the choice of parametrization, the situation can be improved by a change of parametrization.

A one-parameter family of Calabi-Yau threefolds may admit a singular point where the Picard-Fuchs operator has two different initial powers, because of the expected connection with a K3 surfaces singular points of that type were called type K. In the case of a degenerate point of type K, the degenerate configuration of eight planes is not an octic arrangement, hence the method of this paper cannot be used directly. We shall study semistable degeneration at K-point of a one-parameter families of double octics in a separate paper.

The techniques presented in this paper, easily translate to an arithmetic setting, where we study families of schemes over a discrete valuation rings. An example of such construction can be found in \cite{CO}, where similar combinatorial arguments allow construction of an arithmetic semistable degeneration, and a monodromy weight spectral sequence in mixed characteristic gives a concise proof of crystallinity of Galois representations on the third cohomology group
of a Calabi-Yau threefold.

\textbf{Declarations} 

\textbf{Data availability} Purely theoretical approach was used, therefore no datasets were needed.

\textbf{Conflict of interest} The author states that there is no conflict of interest.

%Semistable degeneration in mixed characteristic could also give a simple proof for the properties of CvS Calabi-Yau threefold from \cite{ex}. Coś jeszcze dopisać, jakie własności, b3=0

\newpage
\appendix
\section{List of local degeneration types}
\label{singularity classification}

We list all combinatorial types of local degenerations that occur in one-parameter families of double octics, for degenerate fibers which have a Calabi-Yau resolution. Assume that $P(x,y,z,t,w)=\prod_{i=1}^{8}P_{i}(x,y,z,t,w)$ is a product of eight polynomials $P_{i}$ linear in variables $x,y,z,t$. Assume moreover that for all values of  $w\in\CC$  with at most finite number of exceptions $w\in\Sigma$, the zero-set 

$$D_{w}:=\{(x,y,z,t)\in \PP^{3}: P(x,y,z,t,w)=0\}$$

is an octic arrangement of the same combinatorial type.

By using the classification of double octic from \cite{CYK} one can easily list all such local degenerations. For a full list of new incidences in degenerate fibers of one-parameter families of octic arrangements we refer to the appendix \ref{full list of degenerations}. 

For the sake of simplicity, we assume that the degenerate fiber, $X_{w}$ is at $w=0$.

\subsection{New $l_{3}$ line}\label{new l3}
The degeneration happens when three planes $D_{i_{1},w}$, $D_{i_{2},w}$ and $D_{i_{3},w}$, $1\le i_{1}<i_{2}<i_{3}\le8$ intersect in one point  for $w\not\in\Sigma$, while $D_{i_{1},0}\cap D_{i_{2},0}\cap D_{i_{3},0}$ is a triple line. Moreover, we assume that for small $w\not=0$ the intersection points $D_{i_{1},w}\cap D_{i_{2},w}\cap D_{i_{3},w}$ do not belong to a neighborhood of the point $(0:0:0:1)$ in $\PP^{3}$.

This degeneration is combinatorially equivalent to 
$$u^2=xy(x+y+w).$$

\subsection{New $p_{4}^{0}$ point}
This degeneration happen when four planes $D_{i_{1},w}$, $D_{i_{2},w}$, $D_{i_{3},w}$, $D_{i_{4},w}$, $1\le i_{1}<i_{2}<i_{3}<i_{4}\le 8$, are in general position for general $w$, while the point $D_{i_{1},0}\cap D_{i_{2},0}\cap D_{i_{3},0}\cap D_{i_{4},0}$ is a $p_{4}^{0}$ point.

This degeneration is combinatorially equivalent to 
$$u^2=xyz(x+y+z+w).$$

\subsection{A $p_{5}^{1}$ point degenerates to a $p_{5}^{2}$ point}
For a generic value of $w$ the planes $D_{i_{1}}$, $D_{i_{2}}$, $D_{i_{3}}$ intersect along a triple line, the planes $D_{i_{4}}$, $D_{i_{5}}$  intersect the triple line at the same point. 
For the special value, the planes $D_{i_{1}}$, $D_{i_{4}}$ and $D_{i_{5}}$ also intersect along a triple line.
This degeneration is combinatorially equivalent to 
$$u^2=xy(x+y)z(x+wy+z)$$

\subsection{Two $p_{4}^{1}$ points collide to a $p_{5}^{2}$ point}

For a generic value of $w$ the planes $D_{i_{1}}$, $D_{i_{2}}$, $D_{i_{3}}$ intersect along a triple line, the planes $D_{i_{4}}$, $D_{i_{5}}$ are generic. For the special value, the planes $D_{i_{1}}$, $D_{i_{4}}$ and $D_{i_{5}}$ also intersect along a triple line. 
The $p_{4}^{1}$ points $D_{i_{1}}\cap D_{i_{2}}\cap D_{i_{3}}\cap D_{i_{4}}$ $D_{i_{1}}\cap D_{i_{2}}\cap D_{i_{3}}\cap D_{i_{4}}$ collide. 

This degeneration is combinatorially equivalent to 
$$u^2=xy(x+y)z(x+z+w)$$

\subsection{Two $p_{4}^{1}$ points collide to a $p_{5}^{1}$ point}

For a generic value of $w$ the planes $D_{i_{1}}$, $D_{i_{2}}$, $D_{i_{3}}$ intersect along a triple line, the planes $D_{i_{4}}$, $D_{i_{5}}$ are generic. For the special value, the planes $D_{i_{1}}$, $D_{i_{4}}$ and $D_{i_{5}}$ also intersect along a triple line. 
The $p_{4}^{1}$ points $D_{i_{1}}\cap D_{i_{2}}\cap D_{i_{3}}\cap D_{i_{4}}$ $D_{i_{1}}\cap D_{i_{2}}\cap D_{i_{3}}\cap D_{i_{4}}$ collide. 

This degeneration is combinatorially equivalent to 
$$u^2=xy(x+y)z(x+y+z+w)$$

\subsection{A $p_{4}^{0}$ point degenerates to a $p_{5}^{2}$ point}For generic value of $w$ the planes $D_{i_{1}}, D_{i_{2}}$, $D_{i_{3}}$ and $D_{i_{4}}$ are general four planes intersecting at a $p_{4}^{0}$ and the intersection $D_{i_{1}} \cap D_{i_{2}} \cap$ $D_{i_{3}} \cap D_{i_{4}} \cap D_{i_{5}}$ is empty, at the special value of $w$ the plane $D_{i_{5}}$ contain lines $D_{i_{1}} \cap D_{i_{2}}$ and $D_{i_{3}} \cap D_{i_{4}}$.

This degeneration is combinatorially equivalent to

$$
u^{2}=x y z(x+y+z)(x+y+w)
$$
\subsection{New $p_{4}^{1}$ point}
For generic $w$ the planes $D_{i_{1}}$, $D_{i_{2}}$, $D_{i_{3}}$ and $D_{i_{4}}$ are in general position, at the special value of $w$ the plane $D_{i_{3}}$ contains the line $D_{i_{1}}\cap D_{i_{2}}$, which become  a triple line.

This degeneration is combinatorially equivalent to 
\[u^2=xy(x+y+w)z\]

\subsection{A $p_{4}^{0}$ point degenerates to a $p_{4}^{1}$ point}
For generic $w$ the planes $D_{i_{1}}$, $D_{i_{2}}$, $D_{i_{3}}$ and $D_{i_{4}}$ are general four planes intersecting at one point, at the special value of $w$ the planes $D_{i_{1}}$, $D_{i_{2}}$, $D_{i_{3}}$
intersect at a triple line.

This degeneration is combinatorially equivalent to 
$$u^2=xy(x+y+zw)z$$

\subsection{A $p_{4}^{0}$ point degenerates to a $p_{5}^{1}$ point}

For generic $w$ the planes $D_{i_{1}}$, $D_{i_{2}}$, $D_{i_{3}}$ and $D_{i_{4}}$ are general four planes intersecting at one point and the plane $D_{i_5}$ does not contain it, at the special value of $w$ the plane $D_{i_{5}}$ contains line $D_{i_{1}}\cap D_{i_{2}}$, which becomes a triple line and does not contain the line $D_{i_{3}}\cap D_{i_{4}}$.

This degeneration is combinatorially equivalent to 
$$u^2=xyz(x+y+z)(x-y+w)$$

\subsection{A $p_{5}^{0}$ point degenerates to a $p_{5}^{2}$ point}

For generic $w$ the planes $D_{i_{1}}$, $D_{i_{2}}$, $D_{i_{3}}$, $D_{i_{4}}$ and $D_{i_{5}}$ are general planes intersecting at a $p_{4}^{0}$ point, at the special value of $w$ the plane $D_{i_{5}}$ contains lines $D_{i_{1}}\cap D_{i_{2}}$ and $D_{i_{3}}\cap D_{i_{4}}$, which become triple lines.

This degeneration is combinatorially equivalent to 
$$u^2=xyz(x+y+wz)(x+wy+z)$$

\subsection{A $p_{5}^{0}$ point degenerates to a $p_{5}^{1}$ point}

For generic $w$ the planes $D_{i_{1}}$, $D_{i_{2}}$, $D_{i_{3}}$, $D_{i_{4}}$ and $D_{i_{5}}$ are general planes intersecting at one point, at the special value of $w$ the plane $D_{i_{5}}$ contains the line $D_{i_{1}}\cap D_{i_{2}}$ and does not contain $D_{i_3} \cap D{i_4}$.

This degeneration is combinatorially equivalent to 
$$u^2=xyz(x+y+wz)(x+2y+z)$$

\section{List of double octics degenerations}
\label{full list of degenerations}

Below we present a complete list of degenerations of one parameter families of octic arrangements, where degenerate fiber is also an octic arrangement. Each row represents a single degeneration and the column are organized in the following way.

\begin{itemize}
    \item The first column is the number of a family from the classification in \cite{CYB}.
    \item The second column represents the value of the parameter for which the degeneration occurs.
    \item The third column is a number of a rigid Calabi-Yau corresponding to the octic arrangement in the degenerate fiber from the classification \cite{CYB}.
    \item The fourth column lists which local degenerations from appendix \ref{singularity classification} appear in the given degeneration.
\end{itemize}

\def\arraystretch{1.2}
\begin{longtable}{|c|c|c|l|}
	\caption{Double octics degeneration list}	\\
\hline \parbox{10mm}{\rule{0mm}{5mm}Gen.\\\rule[-2mm]{0mm}{6mm} Arr.} &Special point&\parbox{10mm}{\rule{0mm}{5mm}Spec.\\\rule[-2mm]{0mm}{6mm} Arr.}& Degeneration types \\[1mm]
\hline \endhead\hline 
\endfoot\hline 
2 & 1 & 1 & 2 [ 3, 5, 7, 8 ]\\
5 & 1 & 3 & 2 [ 3, 4, 7, 8 ]\\
5 & -1 & 3 & 2 [ 3, 5, 6, 8 ]\\
8 & -1 & 1 & 1 [ 5, 6, 8 ], 3 [ 1, 4, 5, 6, 8 ], 4 [ 2, 5, 6, 7, 8 ], 7 [ 3, 5, 6, 8 ]\\
10 & 1 & 1 & 1 [ 5, 7, 8 ], 4 [ 1, 4, 5, 7, 8 ], 4 [ 2, 5, 6, 7, 8 ], 8 [ 3, 5, 7, 8 ]\\
16 & 0 & 1 & 1 [ 2, 6, 7 ], 1 [ 4, 6, 8 ], 3 [ 1, 2, 3, 6, 7 ], 3 [ 1, 4, 5, 6, 8 ], \\
&&& 6 [ 2, 4, 6, 7, 8 ], 7 [ 2, 5, 6, 7 ], 7 [ 3, 4, 6, 8 ]\\
16 & $\infty$ & 1 & 1 [ 3, 6, 7 ], 1 [ 5, 6, 8 ], 3 [ 1, 2, 3, 6, 7 ], 3 [ 1, 4, 5, 6, 8 ], \\
&&&  6 [ 3, 5, 6, 7, 8 ], 7 [ 2, 5, 6, 8 ], 7 [ 3, 4, 6, 7 ]\\
20 & $\infty$ & 1 & 1 [ 3, 6, 7 ], 1 [ 4, 6, 8 ], 3 [ 1, 2, 3, 6, 7 ], 4 [ 1, 4, 5, 6, 8 ],  \\
&&& 6 [ 3, 4, 6, 7, 8 ], 7 [ 3, 5, 6, 7 ], 8 [ 2, 4, 6, 8 ]\\
20 & 1 & 19 & 2 [ 3, 5, 6, 8 ]\\
20 & -1 & 3 & 1 [ 1, 6, 7 ], 3 [ 1, 2, 3, 6, 7 ], 4 [ 1, 4, 5, 6, 7 ],  7 [ 1, 6, 7, 8 ]\\
33 & -1 & 3 & 1 [ 1, 7, 8 ], 4 [ 1, 2, 3, 7, 8 ], 4 [ 1, 4, 5, 7, 8 ],  8 [ 1, 6, 7, 8 ]\\
33 & -2 & 32 & 2 [ 3, 5, 7, 8 ]\\
34 & 1 & 19 & 5 [ 1, 2, 3, 7, 8 ]\\
34 & -1 & 19 & 5 [ 1, 4, 5, 7, 8 ]\\
35 & 0 & 1 & 1 [ 2, 6, 7 ], 1 [ 5, 7, 8 ], 4 [ 1, 2, 3, 6, 7 ], 4 [ 1, 4, 5, 7, 8 ],  \\
&&& 6 [ 2, 5, 6, 7, 8 ], 8 [ 2, 4, 6, 7 ],  8 [ 3, 5, 7, 8 ]\\
35 & $\infty$ & 1 & 1 [ 3, 7, 8 ], 1 [ 4, 6, 7 ], 4 [ 1, 2, 3, 7, 8 ],  4 [ 1, 4, 5, 6, 7 ], \\
&&& 6 [ 3, 4, 6, 7, 8 ], 8 [ 2, 4, 6, 7 ], 8 [ 3, 5, 7, 8 ]\\
35 & -1 & 32 & 2 [ 1, 6, 7, 8 ]\\
36 & -1 & 32 & 2 [ 3, 5, 6, 7 ]\\
70 & $\infty$ & 3 & 1 [ 3, 4, 5 ], 1 [ 3, 6, 7 ], 3 [ 1, 2, 3, 4, 5 ],  3 [ 1, 2, 3, 6, 7 ], \\
&&& 6 [ 3, 4, 5, 6, 7 ], 7 [ 3, 4, 5, 8 ],  7 [ 3, 6, 7, 8 ]\\
70 & -1 & 69 & 2 [ 3, 5, 6, 8 ]\\
71 & $\infty$ & 1 & 1 [ 2, 4, 5 ], 1 [ 3, 6, 7 ], 1 [ 5, 7, 8 ], 3 [ 1, 2, 3, 4, 5 ],  \\
&&& 3 [ 1, 2, 3, 6, 7 ], 6 [ 2, 4, 5, 7, 8 ], 6 [ 3, 5, 6, 7, 8 ],  \\
&&& 7 [ 2, 4, 5, 6 ], 7 [ 3, 4, 6, 7 ], 8 [ 1, 5, 7, 8 ]\\
71 & 1 & 1 & 1 [ 2, 6, 7 ], 1 [ 3, 4, 5 ], 1 [ 4, 6, 8 ], 3 [ 1, 2, 3, 4, 5 ],  \\
&&& 3 [ 1, 2, 3, 6, 7 ], 6 [ 2, 4, 6, 7, 8 ], 6 [ 3, 4, 5, 6, 8 ],  \\
&&& 7 [ 2, 5, 6, 7 ], 7 [ 3, 4, 5, 7 ], 8 [ 1, 4, 6, 8 ]\\
71 & 2 & 69 & 2 [ 4, 5, 6, 7 ]\\
72 & 1 & 19 & 1 [ 3, 6, 7 ], 3 [ 1, 2, 3, 6, 7 ], 7 [ 3, 4, 6, 7 ],  \\
&&& 7 [ 3, 5, 6, 7 ], 7 [ 3, 6, 7, 8 ]\\
72 & -1 & 19 & 1 [ 3, 4, 5 ], 3 [ 1, 2, 3, 4, 5 ], 7 [ 3, 4, 5, 6 ],  \\
&&& 7 [ 3, 4, 5, 7 ], 7 [ 3, 4, 5, 8 ]\\
73 & -1 & 69 & 2 [ 1, 5, 7, 8 ]\\
94 & 1 & 1 & 1 [ 1, 6, 7 ], 1 [ 3, 4, 5 ], 1 [ 5, 7, 8 ], 3 [ 1, 2, 3, 4, 5 ],  \\
&&& 4 [ 1, 2, 3, 6, 7 ], 6 [ 1, 5, 6, 7, 8 ], 6 [ 3, 4, 5, 7, 8 ],  \\
&&& 7 [ 3, 4, 5, 6 ], 8 [ 1, 4, 6, 7 ], 8 [ 2, 5, 7, 8 ]\\
94 & $\frac12(1-\sqrt{-3})$ & 451 & 2 [ 3, 5, 6, 7 ]\\
94 & $\frac12(1+\sqrt{-3})$ & 451 & 2 [ 3, 5, 6, 7 ]\\
95 & 1 & 93 & 2 [ 2, 5, 6, 7 ]\\
95 & -1 & 3 & 1 [ 3, 4, 5 ], 1 [ 3, 7, 8 ], 3 [ 1, 2, 3, 4, 5 ], 4 [ 1, 2, 3, 7, 8 ],  \\
&&& 6 [ 3, 4, 5, 7, 8 ], 7 [ 3, 4, 5, 6 ], 8 [ 3, 6, 7, 8 ]\\
96 & 2 & 32 & 1 [ 3, 4, 5 ], 3 [ 1, 2, 3, 4, 5 ], 7 [ 3, 4, 5, 6 ],  \\
&&& 7 [ 3, 4, 5, 7 ], 7 [ 3, 4, 5, 8 ]\\
97 & 1/2 & 93 & 2 [ 4, 5, 7, 8 ]\\
98 & 1 & 93 & 2 [ 3, 6, 7, 8 ]\\
98 & -1 & 19 & 1 [ 3, 4, 5 ], 3 [ 1, 2, 3, 4, 5 ], 7 [ 3, 4, 5, 6 ], 9 [ 3, 4, 5, 7, 8 ]\\
99 & 1 & 19 & 1 [ 3, 7, 8 ], 4 [ 1, 2, 3, 7, 8 ], 7 [ 3, 5, 7, 8 ], 7 [ 3, 6, 7, 8 ],  \\
&&& 8 [ 3, 4, 7, 8 ]\\
99 & -1 & 19 & 1 [ 3, 4, 5 ], 3 [ 1, 2, 3, 4, 5 ], 7 [ 3, 4, 5, 6 ], 9 [ 3, 4, 5, 7, 8 ]\\
100 & 1 & 69 & 5 [ 1, 2, 3, 7, 8 ]\\
144 & 0 & 19 & 1 [ 1, 5, 6 ], 11 [ 1, 4, 5, 6, 7 ], 4 [ 1, 2, 3, 5, 6 ], 7 [ 1, 5, 6, 8 ]\\
144 & 2 & 19 & 1 [ 1, 4, 7 ], 11 [ 1, 4, 5, 6, 7 ], 4 [ 1, 2, 3, 4, 7 ], 7 [ 1, 4, 7, 8 ]\\
152 & 0 & 32 & 1 [ 2, 6, 8 ], 4 [ 1, 2, 3, 6, 8 ], 7 [ 2, 5, 6, 8 ], 7 [ 2, 6, 7, 8 ],  \\
&&& 8 [ 2, 4, 6, 8 ]\\
152 & 1 & 19 & 1 [ 3, 5, 8 ], 4 [ 1, 2, 3, 5, 8 ], 8 [ 3, 4, 5, 8 ], 9 [ 3, 5, 6, 7, 8 ]\\
152 & 2 & 32 & 1 [ 1, 7, 8 ], 4 [ 1, 2, 3, 7, 8 ], 7 [ 1, 5, 7, 8 ], 7 [ 1, 6, 7, 8 ],  \\
&&& 8 [ 1, 4, 7, 8 ]\\
153 & $\infty$ & 3 & 1 [ 1, 5, 6 ], 1 [ 1, 7, 8 ], 4 [ 1, 2, 3, 5, 6 ], 4 [ 1, 2, 3, 7, 8 ],  \\
&&& 6 [ 1, 5, 6, 7, 8 ], 8 [ 1, 4, 5, 6 ], 8 [ 1, 4, 7, 8 ]\\
153 & 1 & 19 & 1 [ 3, 5, 8 ], 4 [ 1, 2, 3, 5, 8 ], 8 [ 3, 4, 5, 8 ], 9 [ 3, 5, 6, 7, 8 ]\\
153 & -1 & 93 & 5 [ 1, 2, 3, 6, 7 ]\\
154 & 0 & 1 & 1 [ 1, 4, 6 ], 1 [ 2, 5, 7 ], 1 [ 6, 7, 8 ], 4 [ 1, 2, 3, 4, 6 ],  \\
&&& 4 [ 1, 2, 3, 5, 7 ], 6 [ 1, 4, 6, 7, 8 ], 6 [ 2, 5, 6, 7, 8 ],  \\
&&& 8 [ 1, 4, 5, 6 ], 8 [ 2, 4, 5, 7 ], 8 [ 3, 6, 7, 8 ]\\
154 & -1 & 32 & 1 [ 3, 6, 7 ], 4 [ 1, 2, 3, 6, 7 ], 7 [ 3, 4, 6, 7 ], 7 [ 3, 5, 6, 7 ],  \\
&&& 8 [ 3, 6, 7, 8 ]\\
155 & $\frac12(1-\sqrt{-3})$ & 451 & 5 [ 1, 2, 3, 6, 7 ]\\
155 & $\frac12(1+\sqrt{-3})$ & 451 & 5 [ 1, 2, 3, 6, 7 ]\\
197 & 0 & 19 & 1 [ 1, 2, 5 ], 1 [ 5, 6, 7 ], 11 [ 1, 2, 3, 4, 5 ], 6 [ 1, 2, 5, 6, 7 ],  \\
&&& 7 [ 1, 2, 5, 8 ], 7 [ 3, 5, 6, 7 ], 7 [ 4, 5, 6, 7 ], 8 [ 5, 6, 7, 8 ]\\
197 & $\infty$ & 3 & 1 [ 1, 3, 5 ], 1 [ 2, 4, 5 ], 1 [ 5, 6, 8 ], 10 [ 1, 2, 3, 4, 5 ],  \\
&&& 6 [ 1, 3, 5, 6, 8 ], 6 [ 2, 4, 5, 6, 8 ], 7 [ 1, 3, 5, 7 ],  \\
&&& 7 [ 2, 4, 5, 7 ], 8 [ 5, 6, 7, 8 ]\\
197 & 1 & 3 & 1 [ 1, 4, 5 ], 1 [ 2, 3, 5 ], 1 [ 5, 7, 8 ], 10 [ 1, 2, 3, 4, 5 ],  \\
&&& 6 [ 1, 4, 5, 7, 8 ], 6 [ 2, 3, 5, 7, 8 ], 7 [ 1, 4, 5, 6 ],  \\
&&& 7 [ 2, 3, 5, 6 ], 8 [ 5, 6, 7, 8 ]\\
197 & 2 & 93 & 1 [ 3, 4, 5 ], 11 [ 1, 2, 3, 4, 5 ], 7 [ 3, 4, 5, 6 ], 7 [ 3, 4, 5, 7 ],  \\
&&& 7 [ 3, 4, 5, 8 ]\\
198 & 0 & 69 & 1 [ 1, 4, 5 ], 11 [ 1, 2, 3, 4, 5 ], 7 [ 1, 4, 5, 6 ], 9 [ 1, 4, 5, 7, 8 ]\\
198 & 1 & 19 & 1 [ 1, 2, 5 ], 1 [ 5, 6, 7 ], 11 [ 1, 2, 3, 4, 5 ], 6 [ 1, 2, 5, 6, 7 ],  \\
&&& 7 [ 1, 2, 5, 8 ], 7 [ 4, 5, 6, 7 ], 7 [ 5, 6, 7, 8 ], 8 [ 3, 5, 6, 7 ]\\
198 & 2 & 69 & 1 [ 2, 4, 5 ], 11 [ 1, 2, 3, 4, 5 ], 7 [ 2, 4, 5, 7 ], 9 [ 2, 4, 5, 6, 8 ]\\
199 & $\infty$ & 1 & 1 [ 1, 3, 5 ], 1 [ 2, 3, 4 ], 1 [ 4, 7, 8 ], 1 [ 5, 6, 8 ],  \\
&&& 6 [ 1, 3, 5, 6, 8 ], 6 [ 2, 3, 4, 7, 8 ], 6 [ 4, 5, 6, 7, 8 ],  \\
&&& 7 [ 1, 3, 5, 7 ], 7 [ 2, 3, 4, 6 ], 8 [ 1, 4, 7, 8 ], 8 [ 2, 5, 6, 8 ],  \\
&&& 10 [ 1, 2, 3, 4, 5 ]\\
199 & -1 & 69 & 1 [ 3, 4, 5 ], 11 [ 1, 2, 3, 4, 5 ], 7 [ 3, 4, 5, 8 ], 9 [ 3, 4, 5, 6, 7 ]\\
200 & $\frac12(1-\sqrt{-3})$ & 451 & 1 [ 2, 3, 4 ], 11 [ 1, 2, 3, 4, 5 ], 7 [ 2, 3, 4, 6 ],  7 [ 2, 3, 4, 7 ], \\
&&& 7 [ 2, 3, 4, 8 ]\\
200 & $\frac12(1+\sqrt{-3})$ & 451 & 1 [ 2, 3, 4 ], 11 [ 1, 2, 3, 4, 5 ], 7 [ 2, 3, 4, 6 ], 7 [ 2, 3, 4, 7 ],  \\
&&& 7 [ 2, 3, 4, 8 ]\\
242 & 1 & 238 & 2 [ 1, 4, 5, 8 ], 2 [ 2, 3, 6, 7 ]\\
242 & -1 & 238 & 2 [ 1, 4, 6, 7 ], 2 [ 2, 3, 5, 8 ]\\
243 & $\infty$ & 3 & 1 [ 2, 7, 8 ], 1 [ 3, 6, 8 ], 1 [ 4, 5, 8 ], 6 [ 2, 3, 6, 7, 8 ],  \\
&&& 6 [ 2, 4, 5, 7, 8 ], 6 [ 3, 4, 5, 6, 8 ], 8 [ 1, 2, 7, 8 ],  \\
&&& 8 [ 1, 3, 6, 8 ], 8 [ 1, 4, 5, 8 ]\\
243 & 1 & 239 & 2 [ 2, 3, 5, 8 ]\\
243 & -1 & 238 & 2 [ 2, 4, 6, 8 ], 2 [ 3, 4, 7, 8 ], 2 [ 5, 6, 7, 8 ]\\
243 & -2 & 240 & 2 [ 4, 6, 7, 8 ]\\
244 & -1/2 & 240 & 2 [ 1, 4, 6, 7 ]\\
244 & -2 & 240 & 2 [ 2, 3, 6, 7 ]\\
246 & 0 & 1 & 1 [ 1, 3, 4 ], 1 [ 2, 5, 6 ], 1 [ 3, 6, 8 ], 1 [ 4, 5, 7 ],  \\
&&& 6 [ 1, 3, 4, 5, 7 ], 6 [ 1, 3, 4, 6, 8 ], 6 [ 2, 3, 5, 6, 8 ],  \\
&&& 6 [ 2, 4, 5, 6, 7 ], 8 [ 1, 2, 3, 4 ], 8 [ 1, 2, 5, 6 ],  \\
&&& 8 [ 3, 6, 7, 8 ], 8 [ 4, 5, 7, 8 ]\\
246 & $\infty$ & 1 & 1 [ 1, 5, 6 ], 1 [ 2, 3, 4 ], 1 [ 3, 6, 7 ], 1 [ 4, 5, 8 ],  \\
&&& 6 [ 1, 3, 5, 6, 7 ], 6 [ 1, 4, 5, 6, 8 ], 6 [ 2, 3, 4, 5, 8 ],  \\
&&& 6 [ 2, 3, 4, 6, 7 ], 8 [ 1, 2, 3, 4 ], 8 [ 1, 2, 5, 6 ], 8 [ 3, 6, 7, 8 ],  \\
&&& 8 [ 4, 5, 7, 8 ]\\
246 & 1 & 241 & 2 [ 3, 4, 5, 6 ]\\
247 & $\infty$ & 93 & 1 [ 2, 3, 4 ], 7 [ 2, 3, 4, 5 ], 7 [ 2, 3, 4, 8 ], 8 [ 1, 2, 3, 4 ],  \\
&&& 9 [ 2, 3, 4, 6, 7 ]\\
247 & -1 & 238 & 2 [ 2, 4, 5, 7 ], 2 [ 2, 4, 6, 8 ], 2 [ 3, 4, 5, 6 ],  2 [ 3, 4, 7, 8 ]\\
247 & -1/2 & 93 & 1 [ 4, 5, 8 ], 7 [ 2, 4, 5, 8 ], 7 [ 3, 4, 5, 8 ], 8 [ 1, 4, 5, 8 ],  \\
&&& 9 [ 4, 5, 6, 7, 8 ]\\
248 & $\infty$ & 19 & 1 [ 2, 3, 4 ], 1 [ 4, 5, 8 ], 6 [ 2, 3, 4, 5, 8 ], 7 [ 4, 5, 6, 8 ],  \\
&&& 7 [ 4, 5, 7, 8 ], 8 [ 1, 2, 3, 4 ], 8 [ 1, 4, 5, 8 ], 9 [ 2, 3, 4, 6, 7 ]\\
248 & 1/2 & 245 & 2 [ 4, 5, 6, 7 ]\\
248 & 1 & 239 & 2 [ 2, 4, 5, 7 ], 2 [ 3, 4, 5, 6 ]\\
248 & -1 & 239 & 2 [ 2, 4, 6, 8 ], 2 [ 3, 4, 7, 8 ]\\
248 & -1/2 & 245 & 2 [ 4, 6, 7, 8 ]\\
249 & 1 & 241 & 2 [ 3, 6, 7, 8 ], 2 [ 4, 5, 7, 8 ]\\
250 & $\infty$ & 69 & 1 [ 1, 3, 4 ], 8 [ 1, 2, 3, 4 ], 9 [ 1, 3, 4, 5, 7 ], 9 [ 1, 3, 4, 6, 8 ]\\
250 & 1/2 & 240 & 2 [ 3, 4, 5, 6 ], 2 [ 3, 4, 7, 8 ]\\
250 & 1 & 93 & 1 [ 4, 6, 7 ], 7 [ 1, 4, 6, 7 ], 7 [ 3, 4, 6, 7 ], 8 [ 2, 4, 6, 7 ], \\
&&& 9 [ 4, 5, 6, 7, 8 ]\\
250 & -1 & 245 & 2 [ 1, 4, 5, 8 ]\\
251 & 0 & 1 & 1 [ 1, 3, 4 ], 1 [ 2, 7, 8 ], 1 [ 3, 5, 7 ], 1 [ 4, 6, 8 ], \\
&&& 6 [ 1, 3, 4, 5, 7 ], 6 [ 1, 3, 4, 6, 8 ], 6 [ 2, 3, 5, 7, 8 ], \\
&&& 6 [ 2, 4, 6, 7, 8 ], 7 [ 3, 5, 6, 7 ], 8 [ 1, 2, 3, 4 ], 8 [ 1, 2, 7, 8 ], \\
&&& 8 [ 4, 5, 6, 8 ]\\
251 & $\infty$ & 19 & 1 [ 2, 3, 4 ], 1 [ 4, 5, 8 ], 6 [ 2, 3, 4, 5, 8 ], 7 [ 1, 4, 5, 8 ], \\
&&& 7 [ 4, 5, 7, 8 ], 8 [ 1, 2, 3, 4 ], 8 [ 4, 5, 6, 8 ], 9 [ 2, 3, 4, 6, 7 ]\\
251 & -1/2 & 93 & 1 [ 4, 6, 7 ], 7 [ 1, 4, 6, 7 ], 7 [ 3, 4, 6, 7 ], 8 [ 2, 4, 6, 7 ], \\
&&& 9 [ 4, 5, 6, 7, 8 ]\\
251 & $\frac12(1-\sqrt{5})$ & 453 & 2 [ 3, 4, 7, 8 ]\\
251 & $\frac12(1+\sqrt{5})$ & 453 & 2 [ 3, 4, 7, 8 ]\\
252 & $\infty$ & 32 & 1 [ 2, 3, 4 ], 1 [ 2, 5, 6 ], 6 [ 2, 3, 4, 5, 6 ], 7 [ 2, 3, 4, 7 ], \\
&&& 7 [ 2, 3, 4, 8 ], 7 [ 2, 5, 6, 7 ], 7 [ 2, 5, 6, 8 ], 8 [ 1, 2, 3, 4 ], \\
&&& 8 [ 1, 2, 5, 6 ]\\
252 & 1/2 & 69 & 1 [ 2, 7, 8 ], 8 [ 1, 2, 7, 8 ], 9 [ 2, 3, 6, 7, 8 ], 9 [ 2, 4, 5, 7, 8 ]\\
252 & 1 & 241 & 2 [ 2, 3, 5, 8 ], 2 [ 2, 4, 6, 7 ]\\
253 & $\infty$ & 3 & 1 [ 2, 5, 6 ], 1 [ 3, 6, 8 ], 1 [ 4, 6, 7 ], 6 [ 2, 3, 5, 6, 8 ], \\
&&& 6 [ 2, 4, 5, 6, 7 ], 6 [ 3, 4, 6, 7, 8 ], 7 [ 1, 4, 6, 7 ], \\
&&& 8 [ 1, 2, 5, 6 ], 8 [ 1, 3, 6, 8 ]\\
253 & -1/2 & 245 & 2 [ 1, 4, 5, 8 ]\\
253 & -2 & 240 & 2 [ 2, 4, 6, 8 ], 2 [ 5, 6, 7, 8 ]\\
253 & $\frac12(-3+\sqrt{5})$ & 453 & 2 [ 4, 5, 6, 8 ]\\
253 & $\frac12(-3-\sqrt{5})$ & 453 & 2 [ 4, 5, 6, 8 ]\\
254 & -1 & 241 & 2 [ 2, 4, 6, 7 ], 2 [ 4, 5, 7, 8 ]\\
254 & $\frac12(-1-\sqrt{5})$ & 453 & 2 [ 2, 4, 5, 7 ]\\
254 & $\frac12(-1+\sqrt{5})$ & 453 & 2 [ 2, 4, 5, 7 ]\\
255 & -1 & 32 & 1 [ 2, 5, 6 ], 1 [ 4, 6, 7 ], 6 [ 2, 4, 5, 6, 7 ], 7 [ 1, 4, 6, 7 ], \\
&&& 7 [ 2, 3, 5, 6 ], 7 [ 2, 5, 6, 8 ], 7 [ 3, 4, 6, 7 ], 8 [ 1, 2, 5, 6 ], \\
&&& 8 [ 4, 6, 7, 8 ]\\
255 & $\frac12(-1-\sqrt{5})$ & 453 & 2 [ 2, 3, 6, 7 ]\\
255 & $\frac12(-1+\sqrt{5})$ & 453 & 2 [ 2, 3, 6, 7 ]\\
256 & 1 & 239 & 2 [ 1, 4, 5, 8 ], 2 [ 2, 3, 6, 7 ]\\
256 & -1 & 238 & 2 [ 1, 4, 6, 7 ], 2 [ 2, 4, 6, 8 ], 2 [ 3, 4, 7, 8 ], 2 [ 5, 6, 7, 8 ]\\
256 & $\frac12(-1+\sqrt{-3})$ & 452 & 2 [ 4, 6, 7, 8 ]\\
256 & $\frac12(-1+\sqrt{-3})$ & 452 & 2 [ 4, 6, 7, 8 ]\\
257 & 1/2 & 240 & 2 [ 1, 4, 5, 8 ], 2 [ 2, 3, 6, 7 ]\\
257 & $\frac12(1+\sqrt{-3})$ & 452 & 2 [ 2, 3, 5, 8 ]\\
257 & $\frac12(1-\sqrt{-3})$ & 452 & 2 [ 2, 3, 5, 8 ]\\
258 & $\infty$ & 32 & 1 [ 2, 7, 8 ], 1 [ 3, 6, 8 ], 6 [ 2, 3, 6, 7, 8 ], 7 [ 2, 4, 7, 8 ], \\
&&& 7 [ 2, 5, 7, 8 ], 7 [ 3, 4, 6, 8 ], 7 [ 3, 5, 6, 8 ], 8 [ 1, 2, 7, 8 ], \\
&&& 8 [ 1, 3, 6, 8 ]\\
258 & 1 & 245 & 2 [ 2, 3, 5, 8 ]\\
258 & -1 & 93 & 1 [ 4, 5, 8 ], 7 [ 2, 4, 5, 8 ], 7 [ 3, 4, 5, 8 ], 8 [ 1, 4, 5, 8 ], \\
&&& 9 [ 4, 5, 6, 7, 8 ]\\
258 & -1/2 & 240 & 2 [ 2, 4, 6, 8 ], 2 [ 3, 4, 7, 8 ]\\
259 & $\infty$ & 32 & 1 [ 2, 5, 6 ], 1 [ 4, 6, 7 ], 6 [ 2, 4, 5, 6, 7 ], 7 [ 1, 4, 6, 7 ], \\
&&& 7 [ 2, 3, 5, 6 ], 7 [ 2, 5, 6, 8 ], 7 [ 4, 6, 7, 8 ], 8 [ 1, 2, 5, 6 ], \\
&&& 8 [ 3, 4, 6, 7 ]\\
259 & -1/2 & 32 & 1 [ 2, 7, 8 ], 1 [ 4, 5, 8 ], 6 [ 2, 4, 5, 7, 8 ], 7 [ 1, 4, 5, 8 ], \\
&&& 7 [ 2, 3, 7, 8 ], 7 [ 2, 6, 7, 8 ], 7 [ 4, 5, 6, 8 ], 8 [ 1, 2, 7, 8 ], \\
&&& 8 [ 3, 4, 5, 8 ]\\
262 & 1 & 1 & 1 [ 1, 5, 6 ], 1 [ 2, 3, 4 ], 1 [ 3, 6, 7 ], 1 [ 4, 5, 8 ], \\
&&& 6 [ 1, 3, 5, 6, 7 ], 6 [ 1, 4, 5, 6, 8 ], 6 [ 2, 3, 4, 5, 8 ], \\
&&& 6 [ 2, 3, 4, 6, 7 ], 7 [ 3, 6, 7, 8 ], 8 [ 1, 2, 3, 4 ], 8 [ 1, 2, 5, 6 ], \\
&&& 8 [ 4, 5, 7, 8 ]\\
262 & $\frac12(1-\sqrt{-3})$ & 451 & 1 [ 2, 5, 6 ], 7 [ 2, 4, 5, 6 ], 7 [ 2, 5, 6, 7 ], 8 [ 1, 2, 5, 6 ], \\
&&& 9 [ 2, 3, 5, 6, 8 ]\\
262 & $\frac12(1+\sqrt{-3})$ & 451 & 1 [ 2, 5, 6 ], 7 [ 2, 4, 5, 6 ], 7 [ 2, 5, 6, 7 ], 8 [ 1, 2, 5, 6 ], \\
&&& 9 [ 2, 3, 5, 6, 8 ]\\
265 & 1 & 69 & 1 [ 1, 5, 6 ], 8 [ 1, 2, 5, 6 ], 9 [ 1, 3, 5, 6, 7 ], 9 [ 1, 4, 5, 6, 8 ]\\
265 & -1 & 69 & 1 [ 2, 5, 6 ], 8 [ 1, 2, 5, 6 ], 9 [ 2, 3, 5, 6, 8 ], 9 [ 2, 4, 5, 6, 7 ]\\
266 & $\infty$ & 19 & 1 [ 2, 5, 6 ], 1 [ 3, 6, 8 ], 6 [ 2, 3, 5, 6, 8 ], 7 [ 3, 4, 6, 8 ], \\
&&& 7 [ 3, 6, 7, 8 ], 8 [ 1, 2, 5, 6 ], 8 [ 1, 3, 6, 8 ], 9 [ 2, 4, 5, 6, 7 ]\\
266 & 1/2 & 245 & 2 [ 3, 4, 7, 8 ]\\
266 & 1 & 19 & 1 [ 2, 7, 8 ], 1 [ 3, 5, 7 ], 6 [ 2, 3, 5, 7, 8 ], 7 [ 3, 4, 5, 7 ], \\
&&& 7 [ 3, 5, 6, 7 ], 8 [ 1, 2, 7, 8 ], 8 [ 1, 3, 5, 7 ], 9 [ 2, 4, 6, 7, 8 ]\\
266 & -1 & 245 & 2 [ 3, 4, 5, 6 ]\\
266 & 2 & 245 & 2 [ 5, 6, 7, 8 ]\\
268 & $\infty$ & 69 & 1 [ 2, 3, 4 ], 8 [ 1, 2, 3, 4 ], 9 [ 2, 3, 4, 5, 8 ], 9 [ 2, 3, 4, 6, 7 ]\\
268 & $\frac12(3+\sqrt{5})$ & 453 & 2 [ 2, 4, 5, 7 ]\\
268 & $\frac12(3-\sqrt{5})$ & 453 & 2 [ 2, 4, 5, 7 ]\\
273 & 0 & 19 & 1 [ 1, 3, 4 ], 1 [ 4, 5, 7 ], 6 [ 1, 3, 4, 5, 7 ], 7 [ 2, 4, 5, 7 ], \\
&&& 7 [ 4, 5, 7, 8 ], 8 [ 1, 2, 3, 4 ], 8 [ 4, 5, 6, 7 ], 9 [ 1, 3, 4, 6, 8 ]\\
273 & $\infty$ & 19 & 1 [ 2, 3, 4 ], 1 [ 4, 6, 7 ], 6 [ 2, 3, 4, 6, 7 ], 7 [ 1, 4, 6, 7 ], \\
&&& 7 [ 4, 6, 7, 8 ], 8 [ 1, 2, 3, 4 ], 8 [ 4, 5, 6, 7 ], 9 [ 2, 3, 4, 5, 8 ]\\
273 & 1/2 & 245 & 2 [ 1, 4, 5, 8 ]\\
273 & 1 & 19 & 1 [ 1, 2, 4 ], 1 [ 4, 5, 6 ], 6 [ 1, 2, 4, 5, 6 ], 7 [ 3, 4, 5, 6 ], \\
&&& 7 [ 4, 5, 6, 8 ], 8 [ 1, 2, 3, 4 ], 8 [ 4, 5, 6, 7 ], 9 [ 1, 2, 4, 7, 8 ]\\
273 & -1 & 245 & 2 [ 3, 4, 7, 8 ]\\
273 & 2 & 245 & 2 [ 2, 4, 6, 8 ]\\
274 & 0 & 3 & 1 [ 1, 3, 4 ], 1 [ 4, 5, 7 ], 1 [ 4, 6, 8 ], 6 [ 1, 3, 4, 5, 7 ], \\
&&& 6 [ 1, 3, 4, 6, 8 ], 6 [ 4, 5, 6, 7, 8 ], 7 [ 2, 4, 6, 8 ], \\
&&& 8 [ 1, 2, 3, 4 ], 8 [ 2, 4, 5, 7 ]\\
274 & 1 & 245 & 2 [ 1, 4, 5, 8 ]\\
274 & -1 & 3 & 1 [ 2, 5, 6 ], 1 [ 3, 6, 8 ], 1 [ 4, 6, 7 ], 6 [ 2, 3, 5, 6, 8 ], \\
&&& 6 [ 2, 4, 5, 6, 7 ], 6 [ 3, 4, 6, 7, 8 ], 7 [ 1, 4, 6, 7 ], \\
&&& 8 [ 1, 2, 5, 6 ], 8 [ 1, 3, 6, 8 ]\\
274 & -2 & 245 & 2 [ 2, 3, 6, 7 ]\\
274 & $\frac12(-1-\sqrt{-3})$ & 452 & 2 [ 3, 4, 5, 6 ]\\
274 & $\frac12(-1+\sqrt{-3})$ & 452 & 2 [ 3, 4, 5, 6 ]\\
454 & 1 & 451 & 1 [ 2, 5, 6 ], 7 [ 2, 3, 5, 6 ], 7 [ 2, 5, 6, 8 ], 8 [ 1, 2, 5, 6 ], \\
&&& 9 [ 2, 4, 5, 6, 7 ]\\
454 & $\frac12(1-\sqrt{-3})$ & 451 & 1 [ 2, 7, 8 ], 7 [ 2, 4, 7, 8 ], 7 [ 2, 5, 7, 8 ], 8 [ 1, 2, 7, 8 ], \\
&&& 9 [ 2, 3, 6, 7, 8 ]\\
\hline
\end{longtable}

\printbibliography
\end{document}